\colorlet{darkblue}{blue!50!black}
\def\int{\displaystyle\!int}
\def\lim{\displaystyle\!lim}
\def\sum{\displaystyle\!sum}
\def\sup{\displaystyle\!sup}
\def\inf{\displaystyle\!inf}
\def\cap{\displaystyle\!cap}
\def\max{\displaystyle\!max}
\def\min{\displaystyle\!min}
\newtheorem{theorem}{\bf Theorem}[section]
\newtheorem{lemma}{\bf Lemma}[section]
\newtheorem{proposition}{\bf Proposition}[section]
\newtheorem{remark}{\bf Remark}[section]
\begin{document}

\title{Ergodicity for the randomly forced Korteweg-de Vries-Burgers equation}
\author{Peng Gao
\\[2mm]
\small School of Mathematics and Statistics, and Center for Mathematics
\\
\small and Interdisciplinary Sciences, Northeast Normal University,
\\
\small Changchun 130024,  P. R. China
\\[2mm]
\small Email: gaopengjilindaxue@126.com }
\date{\today}
\maketitle

\vbox to -13truemm{}

\begin{abstract}
Our goal in this paper is to investigate ergodicity of the randomly forced Korteweg-de Vries-Burgers(KdVB) equation driven by non-additive white noise. Under reasonable conditions, we show that exponential ergodicity for KdVB equation driven by a space-time localised noise and ergodicity for KdVB equation driven by a multiplicative white noise. Our proof is based on some newly developed analytical properties for KdVB equation, such as Carleman estimate, truncated observability inequality, Foia\c{s}-Prodi estimate. Combining these analytical properties with coupling method and asymptotic coupling method, we can investigate the long time behavior of randomly forced KdVB equation.
\\[6pt]
{\sl Keywords: ergodicity; Korteweg-de Vries-Burgers equation; Foia\c{s}-Prodi estimate; Carleman estimate; degenerate random force}
\\
{\sl 2020 Mathematics Subject Classification: 60H15, 35R60, 37A25}
\end{abstract}
\tableofcontents
\setcounter{section}{0}

\section{Introduction}
The motion of long, unidirectional, weakly nonlinear water waves on a
channel can be described, as is well known, by the Korteweg de Vries(KdV) equation. This equation has been proposed as a model for small-amplitude, long
waves in many different physical systems. It incorporates effects of dispersion and
of nonlinear convection, yielding good qualitative predictions of various observable
phenomena. However, in many real situations, to effect quantitative agreement of predictions
with experimentally obtained data, energy dissipation mechanisms may need, accounted
for Korteweg-de Vries-Burgers (KdVB) equation through the term $-u_{xx}+u$, namely,
$$u_{t}+u_{xxx}-u_{xx}+u+uu_{x}=0.$$
The KdVB equation is considered as a simple model displaying the features of dissipation, dispersion and nonlinearity. It also arises in many physical applications such as propagation waves in elastic tube filled with a viscous fluid and weakly nonlinear plasma waves with certain dissipative effects(see for instance \cite{Background1,Background2}). The soliton propagation in the
random weakly viscous media can also be studied in the framework of the forced KdVB equation. In recent years, the KdVB equation has attracted the many researchers' attention(see for instance \cite{Chen2017,r1,r2,r3,r5,A1}).
\par
In the study of water waves, when the surface of the fluid is submitted to a non
constant pressure, or when the bottom of the layer is not flat, a forcing
term has to be added to the equation. This term is given by the
gradient of the exterior pressure or of the function whose graph defines the
bottom. The soliton propagation in the random weakly viscous media or in the random field can also be studied in the framework of the random forced KdVB equation (see for instance \cite{r4,Background3,Background4,GP2023non}).
\par
In this paper, we investigate the long time behavior for the randomly forced KdVB equation. To be specific, we consider the following randomly forced KdVB equation on $\mathbb{T}=\mathbb{R}/2\pi\mathbb{Z}$:
\begin{eqnarray}\label{1}
\begin{array}{l}
\left\{
\begin{array}{lll}
u_{t}+u_{xxx}-u_{xx}+u+uu_{x}=h+\eta\\
u(x,0)=u_{0}
\end{array}
\right.
\end{array}
\begin{array}{lll}
\textrm{in}~~\mathbb{T},\\
\textrm{in}~~\mathbb{T}.
\end{array}
\end{eqnarray}
where $h=h(t,x)$ is a given function and $\eta$ is a stochastic process which will be specified later.
\par
Motivated from both physical and mathematical standpoints, an important mathematical question arises:
\par
~~
\par
\rm
\textbf{What is the asymptotic behavior of $u(t)$ as $t\rightarrow+\infty$~?}
\rm
\par
~~
\par
As we know, ergodicity is an effective tool to describe long time behavior for the randomly forced PDEs. In the last decades there have been many papers on the subject
of ergodicity for partial differential equations(PDEs) with random forcing, we refer the reader
to \cite{M1,Fla1,Deb1} and the book \cite{KS12} for a detailed discussion of the literature in this direction. The large majority of the works
concern PDEs driven by an additive white noise, whereas the papers
concerning \textit{non-additive white noise} (e.g. multiplicative-type noise or space-time localised noise) are much scarcer. Up to now, there is less work on ergodicity for KdV type equation, \cite{Glat2021} establishes its mixing property in the case of additive white noise.
\par
Different from the previous results, in this paper, we investigate ergodicity for the randomly forced KdVB equation driven by a \textit{space-time localised noise} and a \textit{multiplicative white noise}. More precisely, we establish the following main results:
\par
(i) \textit{Exponential ergodicity for KdVB equation driven by a space-time localised noise} (see Theorem \ref{MT}). Space-time localised noise is an important kind of noises
in engineering and physics, this kind of noise is both degenerate in Fourier space and physical space (see \cite{S1,S2}), this motivates us to apply the coupling method and control method to ergodicity problem. This framework is successfully used to establish ergodicity for Navier-Stokes system with a space-time localised noise (see \cite{S1,S2}), and it also works for dissipative PDEs driven by a degenerate bounded noise (see \cite{K2020GAFA,K2020JEPM,GARMA}). In order to apply this framework to our problem, we need a new Carleman estimate for KdVB equation, it plays a key role in our proof. However, the classical method for Carleman estimate is hard to be applied to KdVB equation, here we develop some new technique to KdVB equation. Based on the new Carleman estimate, we can derive a truncated observability inequality for KdVB equation and to apply coupling method to infer exponential ergodicity. The Carleman estimate for KdVB equation we obtain are an interesting result in themselves, and we hope to use it also to obtain controllability and quantitative decay results.
\par
(ii) \textit{Ergodicity for KdVB equation driven by a multiplicative white noise} (see Theorem \ref{MT2} and Theorem \ref{MT3}). In this paper, we prove uniqueness of the invariant measure and asymptotic stability for KdVB equation with a multiplicative white noise. Since many methods on PDEs driven by an additive stochastic forcing term don't work in the case of multiplicative white noise, this motivates us to apply asymptotic coupling method.
Asymptotic coupling method provides a flexible and intuitive framework for proving the uniqueness of invariant measures for a variety of PDEs with an additive white noise, see \cite{AC1,AC2,AC3,AC5}. Differently from the previous results, here we deal with more general noises. The covariance operator either is bounded or satisfies a sublinear or a linear growth condition. This case is also considered for Navier-Stokes system in \cite{AC4}. Asymptotic coupling method is heavily based on Foia\c{s}-Prodi estimate for PDEs. In our proof, Foias-Prodi estimates in expectation is a crucial tool. Due to the multiplicative white noise, Foia\c{s}-Prodi estimate in our case is harder than the case of PDEs with an additive white noise. Here, we develop some new techniques to overcome difficulties. We derive the Foias-Prodi estimate in expectation for the KdVB equation and show that it is in fact the crucial ingredient to readapt the asymptotic coupling method of \cite{AC1} and \cite{AC2} to
infer uniqueness of the invariant measure and asymptotic stability in the presence of multiplicative white noise.
\par~~
\par

The rest of the paper is organized as follows. In Section 2, we introduce the mathematical setting and main results in this paper. We establish a new Carleman estimate for the KdVB equation in Section 3. In Section 4, we introduce the coupling method and an abstract criterion for the proof of Theorem \ref{MT}. A new Foia\c{s}-Prodi estimate for KdVB equation is established in Section 5. In Section 6, we prove Theorem \ref{MT2} and Theorem \ref{MT3}.

\section{Main results}
\subsection{Mathematical setting}
Let $X$ be a Polish space with a metric $d_{X}(u,v)$, the Borel $\sigma$-algebra on $X$ is denoted by $\mathcal{B}(X)$ and the set of Borel
probability measures by $\mathcal{P}(X).$ $C_{b}(X)$ is the space of continuous functions $f:X\rightarrow \mathbb{C}$ endowed with the
norm $\|f\|_{\infty}=\sup_{u\in X}|f(u)|.$ $B_{X}(R)$ stands for the ball in $X$ of radius $R$ centred at zero.
We write $C(X)$ when $X$ is compact. $L_{b}(X)$ is the space of functions $f\in C_{b}(X)$ such that
$$\|f\|_{L(X)}=\|f\|_{\infty}+\sup\limits_{u\neq v}\frac{|f(u)-f(v)|}{d_{X}(u,v)}<\infty.$$
The dual-Lipschitz metric on $\mathcal{P}(X)$ is defined by
$$\|\mu_{1}-\mu_{2}\|^{*}_{L(X)}=\sup_{\|f\|_{L(X)}\leq 1}|\langle f,\mu_{1}\rangle-\langle f,\mu_{2}\rangle|,~~\mu_{1},\mu_{2}\in \mathcal{P}(X),$$
where $\langle f,\mu \rangle=\int_{X}f(u)\mu(du).$
\par
We denote by $L^{2}(\mathbb{T})(=H^{0}(\mathbb{T}))$ the space of all Lebesgue square integrable functions on $\mathbb{T}$. The inner product on $L^{2}(\mathbb{T})$ is
$(u,v)=\int_{\mathbb{T}}uvdx,$ for any $u,v\in L^{2}(\mathbb{T}).$ The norm on $L^{2}(\mathbb{T})$ is $\|u\|=( u,u )^{\frac{1}{2}},$ for any $u\in L^{2}(\mathbb{T}).$ The definition of $H^{s}(\mathbb{T})$ can be found in \cite{Lions}, the norm on $H^{s}(\mathbb{T})$ is $\|\cdot\|_{H^{s}}.$
We define the spaces
$$X_{i}=C([0,T];H^{i}(\mathbb{T}))\cap L^{2}(0,T;H^{i+1}(\mathbb{T}))~~(i=0,1,2)$$
equipped with their natural norms.
\par
Set $H:=L^{2}(\mathbb{T}).$ Let $\{e_{i}\}$ be an orthonormal basis in $H$ formed of the eigenfunctions of the Laplacian in $\mathbb{T}$,
$\{\lambda_{i}\}$ be the corresponding sequence of eigenvalues for $-\partial_{xx}+1$ in $\mathbb{T}$, and $P_{N}$ be the orthogonal projection in $H$ on the vector space
$$H_{N}:=span\{e_{1},\cdots,e_{N}\}.$$
\par
Set $D_{T}:=\mathbb{T}\times(0,T)$. Let $Q\subset D_{T}$ be an open set, $\{\varphi_{i}\}\subset H^{1}(Q)$ be an orthonormal basis in $L^{2}(Q)$ formed of the eigenfunctions of $-\partial_{xx}-\partial_{tt}+1$ in $Q$,
$\{\alpha_{i}\}$ be the corresponding sequence of eigenvalues for $-\partial_{xx}-\partial_{tt}+1$ in $Q$, and $\Pi_{N}$ be the orthogonal projection in $L^{2}(Q)$ on the vector space
$$\mathcal{E}_{N}:=span\{\varphi_{1},\cdots,\varphi_{N}\}.$$
Let $\psi_{i}=\chi\varphi_{i}$ be linearly independent, where $\chi\in C_{0}^{\infty}(Q)$ is a non-zero function. Extending the functions $\psi_{i}$ by zero outside $Q,$ we may regard them as elements of $H^{1}_{0}(D_{T})$.
\par
Given two Banach spaces $E$ and $F,$ we denote by $L(E; F)$ the space of all linear bounded operators $B : E\rightarrow F$ and abbreviate $L(E):=L(E;E).$
If $H$ and $K$ are separable Hilbert spaces, we employ the symbol $L_{HS}(H;K)$ for the space of Hilbert-Schmidt operators from $H$ to $K.$

\par
Throughout the paper, the letter $C$ denotes a positive constant whose value may change in different occasions. We will write the dependence of constant on parameters explicitly if it is essential.
\par
If we define $Au:=u_{xxx}-u_{xx}+u, B(u):=uu_{x}$, we can rewrite \eqref{1} as
\begin{eqnarray}\label{1r}
\begin{array}{l}
\left\{
\begin{array}{lll}
u_{t}+Au+B(u)=h+\eta\\
u(x,0)=u_{0}
\end{array}
\right.
\end{array}
\begin{array}{lll}
\textrm{in}~~\mathbb{T},\\
\textrm{in}~~\mathbb{T}.
\end{array}
\end{eqnarray}
This compact form is useful for our later argument.
\par~~
\par
Now, we are in a position to present the main results in this paper.

\subsection{KdVB equation driven by a space-time localised noise}
In this section, $\eta$ is a stochastic process of the form
\begin{equation}\label{39}
\begin{split}
\eta(t,x)=\sum\limits_{k=1}^{\infty}I_{k}(t)\eta_{k}((t-k+1)T,x),~~t\geq0,
\end{split}
\end{equation}
$I_{k}$ is the indicator function of the interval $((k-1)T,kT)$ and $\{\eta_{k}\}$ is a sequence of
i.i.d. random variables in $L^{2}(D_{T})$ that are continued by zero for $t\notin [0,T].$

\par
For any $k\geq1,$ the random variables $\eta_{k}$ satisfy the following condition
\par
\textbf{(DN) Structure of the noise.} The random variables $\eta_{k}$ has the form
\begin{equation*}
\begin{split}
\eta_{k}(t,x)=\sum\limits_{i=1}^{\infty}b_{i}\xi_{ik}\psi_{i}(t,x),
\end{split}
\end{equation*}
where $\xi_{ik},\psi_{i},b_{i}$ satisfy the following assumptations
\par
$\bullet$ $\xi_{ik}$ are independent scalar random variables such that $|\xi_{ik}|\leq1$ with probability $1,$ and there are non-negative functions
$p_{i}\in C^{1}(\mathbb{R})$ such that for any $i\geq1$
$$p_{i}(0)\neq0, \mathcal{D}(\xi_{ik})=p_{i}(r)dr;$$
\par
$\bullet$ $\{b_{i}\}\subset \mathbb{R}$ is a non-negative sequence such that $B:=\sum\limits_{i=1}^{\infty}b_{i}\|\psi_{i}\|_{H^{2}(Q)}<+\infty.$
\par
$\bullet$ Let $\mathcal{K}\subset L^{2}(Q)$ be the support of the law of $\eta_{k},$ which contains the origin.
\begin{remark}
The hypotheses imposed on $\eta_{k}$ imply that $\mathcal{K}$ is a compact subset in $H^{1}_{0}(Q).$ Continuing the elements of $\mathcal{K}$ by zero outside $Q,$ we may regard $\mathcal{K}$ as a compact subset of $H^{1}_{0}(D_{T}).$
\end{remark}
\par~~
\par
We introduce the following spaces $H:=L^{2}(\mathbb{T}),E:=L^{2}(D_{T})$ and define an operator $S$ as
\begin{equation*}
\begin{split}
S: H\times E&\mapsto H\\
(u_{0},f)&\mapsto u(T),
\end{split}
\end{equation*}
where $u$ is a solution to \eqref{1} with $h+\eta=f$.
If $u(t)$ is a solution of \eqref{1} with $\eta$ in \eqref{39} and denote $u_{k}=u(kT),$ it is easy to see that
\begin{equation}\label{LS1}
\begin{split}
u_{k}=S(u_{k-1},h+\eta_{k}),~~k\geq1.
\end{split}
\end{equation}
Since $\eta_{k}$ are i.i.d. random variables in $E$, \eqref{LS1} defines a homogeneous family of Markov chains in $H,$ which is denoted by $(u_{k},\mathbb{P}_{u}),u\in H.$ Let $P_{k}(u,\Gamma)$ be the transition function for $(u_{k},\mathbb{P}_{u}).$
\par~~
\par
\textbf{(AC) Approximate controllability to a given point.} There is $\bar{u}\in H$ such that for any positive constants $R$ and $\varepsilon>0$, there is an integer $l\geq1$ such that for any $v\in B_{H}(R),$ there
are $\zeta_{1},\zeta_{2},\cdots,\zeta_{l}\in \mathcal{K}$ such that
\begin{equation}\label{24}
\begin{split}
\|S_{l}(v,\zeta_{1},\zeta_{2},\cdots,\zeta_{l})-\bar{u}\|\leq \varepsilon,
\end{split}
\end{equation}
where $S_{l}(v,\zeta_{1},\zeta_{2},\cdots,\zeta_{l})$ stands for $u_{l}$ defined by \eqref{LS1} with $\eta_{l}=\zeta_{l}(1\leq l\leq k)$ and $u_{0}=v.$
\par
~~
\par
Now, we are in a position to present the following result in this paper.
\begin{theorem}\label{MT}
Let $h\in H^{1}_{loc}(\mathbb{R}_{+}\times \mathbb{T})$ be $T-$periodic in time, Conditions (DN) and (AC) hold.
Then there exists an integer $N\geq1$ such that if
\begin{equation}\label{41}
\begin{split}
b_{i}\neq0,~i=1,2,\cdots,N,
\end{split}
\end{equation}
holds, then there is a unique stationary measure $\mu\in \mathcal{P}(H)$ and positive numbers $C,\sigma$ such that, for any $u_{0}\in H,$ the solution $u$ of \eqref{1} with $\eta$ in \eqref{39} satisfies
\begin{equation*}
\begin{split}
\|P_{k}(u_{0},\cdot)-\mu\|_{L}^{*}\leq C(1+\|u_{0}\|^{2})e^{-\sigma k},~k\geq 0.
\end{split}
\end{equation*}
\end{theorem}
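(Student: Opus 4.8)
The plan is to prove the theorem by reducing it to an abstract exponential-mixing criterion for the discrete-time Markov chain $(u_k,\mathbb{P}_u)$ defined by \eqref{LS1}, in the spirit of the coupling approach for degenerate bounded noise. Three ingredients must be combined: a dissipative Lyapunov structure inherited from the damping terms $-u_{xx}+u$ in \eqref{1}, the approximate controllability hypothesis (AC), and a squeezing mechanism driven by the finitely many nondegenerate low modes guaranteed by (DN) together with \eqref{41}. The crux is a coupling construction in which two trajectories issued from arbitrary data are brought together by matching only the observed low-frequency part of the solution, a step made legitimate by a truncated observability inequality that I would derive from a Carleman estimate for the KdVB operator.

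First I would establish the Lyapunov estimate. Testing \eqref{1} against $u$, the dispersive term contributes nothing, the nonlinearity vanishes since $\int_{\mathbb{T}} u^2 u_x\,dx=0$, and the damping yields $\tfrac12\frac{d}{dt}\|u\|^2+\|u_x\|^2+\|u\|^2=(h+\eta,u)$. Because the noise range $\mathcal{K}$ is bounded, this gives a one-step contraction of the form $\mathbb{E}[\|u_k\|^2\mid u_{k-1}]\le q\|u_{k-1}\|^2+C$ with $q<1$, so $V(u)=1+\|u\|^2$ is a Lyapunov function; this produces the prefactor $1+\|u_0\|^2$ in the conclusion and ensures that every trajectory returns to a fixed ball $B_{H}(R)$ with probability bounded below.

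Next I would set up the coupling. Using (AC), any state in $B_{H}(R)$ is steered within $l$ steps to an $\varepsilon$-neighbourhood of $\bar{u}$ by admissible controls $\zeta_1,\dots,\zeta_l\in\mathcal{K}$, and since $p_i(0)\neq0$ these are realised by the noise with positive probability. Once both coupled trajectories lie near $\bar{u}$, the decisive step is the squeezing: I would construct a maximal coupling of the observed low-mode increments $P_{N}\eta_k$, legitimate precisely because $b_i\neq0$ for $i\le N$ by \eqref{41} makes the law of $P_{N}\eta_k$ nondegenerate, so that $P_{N}u_k=P_{N}u_k'$ on a likely event, and then use a truncated observability inequality to upgrade this to a genuine contraction $\|u_k-u_k'\|\le\kappa\|u_{k-1}-u_{k-1}'\|$ with $\kappa<1$ on all modes. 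The choice of $N$ in the statement is exactly the threshold at which controlling the observed modes forces contraction of the uncontrolled ones.

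The main obstacle is this squeezing step, where the space-time localised and Fourier-degenerate character of the noise bites hardest: the forcing acts only on $Q\subset D_T$ through finitely many modes, so there is no spectral gap in the high frequencies, and the third-order dispersive operator together with the convection term $uu_x$ must be absorbed. This is where I would need the Carleman estimate and the resulting truncated observability inequality to do the essential work, quantifying how control of finitely many modes propagates, via a Foia\c{s}-Prodi mechanism, to control of the full solution. Once the Lyapunov bound, the recurrence to $B_{H}(R)$, the approach to $\bar{u}$, and the contractive coupling near $\bar{u}$ are in hand, a standard stopping-time argument shows that the probability of coupling failure decays geometrically in $k$; this yields the bound $\|P_{k}(u_{0},\cdot)-\mu\|_{L}^{*}\le C(1+\|u_0\|^2)e^{-\sigma k}$ and, as a byproduct, the uniqueness of the stationary measure $\mu$.
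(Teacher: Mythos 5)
Your overall architecture (Lyapunov dissipation, recurrence to a ball, use of (AC) with $p_i(0)\neq 0$ to reach $\bar u$, then a squeezing step powered by the Carleman/truncated-observability machinery) matches the paper's skeleton, and your Lyapunov computation is essentially the one the paper performs in Section 4.3. But the decisive squeezing step, as you describe it, has a genuine gap. You propose a maximal coupling of the ``observed low-mode increments $P_N\eta_k$'' so that $P_N u_k = P_N u_k'$ on a likely event, and then a Foia\c{s}-Prodi-type contraction of the remaining modes. This is the classical Kuksin--Shirikyan scheme for noise that acts \emph{additively and non-degenerately on the Fourier modes of $H$}, and it is precisely what fails here: under (DN) the noise is $\eta_k=\sum_i b_i\xi_{ik}\psi_i$ with $\psi_i=\chi\varphi_i$ supported in the space-time cylinder $Q\subset D_T$, where the $\varphi_i$ are eigenfunctions of $-\partial_{xx}-\partial_{tt}+1$ in $Q$, not Laplacian eigenfunctions on $\mathbb{T}$. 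The noise therefore cannot realise arbitrary values of $P_N u_k$ (it is constrained to a localised, finite set of directions acting over the whole time interval), the conditional law of $P_N u_k$ is the image of a product measure under a nonlinear map and has no usable density on $P_N H$, and even coupling the noises exactly would not give $P_N u_k=P_N u_k'$ because the nonlinearity mixes modes. Exact matching of solution low modes is simply not available for this degenerate, localised forcing. You also conflate two different integers: the $N$ in \eqref{41} counts active noise modes $\psi_i$ in $Q$, while the Fourier cutoff $P_N$ on $H$ plays a separate role (in the paper these appear as $m$, resp.\ $N$, in the optimal control problem, linked by the truncated observability inequality $\|v(0)\|\leq C\|\Pi_M(\chi v)\|_{L^2(D_T)}$ for $v_T\in H_N$).

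What the paper does instead is replace exact matching by \emph{stabilisation}: it verifies the hypotheses of Shirikyan's abstract criterion (Proposition 4.1, i.e.\ Theorem 1.1 of [S2]), namely regularity (R), approximate controllability (ACP) (from your/its condition (AC)), decomposability (D) (from (DN)), and crucially local stabilisability (LS). Condition (LS) is established in Proposition 4.3 by solving a linear-quadratic control problem: minimise $\frac12\int_0^T\|\zeta\|^2dt+\frac1\delta\|P_N w(T)\|^2$ over controls entering the linearised equation as $\chi(\Pi_m\zeta)$ --- i.e.\ controls realisable within the actual geometry of the noise. The truncated observability inequality (Theorem 3.5, itself a consequence of the Carleman estimate) is exactly what bounds the optimal control by $C\|v_0\|$ and makes $\|P_N w(T)\|\leq C\delta\|v_0\|$, after which a perturbative estimate on the nonlinear remainder gives the contraction $\|S(\hat u_0,h)-S(u_0,h+\Upsilon(h,\hat u_0)(u_0-\hat u_0))\|\leq q\|u_0-\hat u_0\|$. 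The probabilistic implementation of this control (your ``likely event'') is then handled inside the abstract theorem via condition (D) and Girsanov-type density arguments on the finite-dimensional noise components, rather than by a maximal coupling of solution modes. So to repair your proof you would have to abandon the ``match $P_N u_k$'' step and replace it by this feedback-stabilisation construction; note also that the Foia\c{s}-Prodi estimate of Section 5 is used in the paper only for the multiplicative-noise results (Theorems 2.2 and 2.3), not for Theorem 2.1.
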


\subsection{KdVB equation driven by a multiplicative white noise}
In this section, $\eta$ is a multiplicative noise of the form
\begin{equation}\label{43}
\begin{split}
\eta(t)=g(u)\frac{\partial}{\partial t}W,~~t\geq0,
\end{split}
\end{equation}
where $W$ is a $U$-cylindrical Wiener process defined on a filtered probability space
$(\Omega,\mathcal{F},\mathcal{F}_{t},\mathbb{P})$, $U$ is a separable real Hilbert space (see \cite{DaZ1}).
\par
We introduce the following assumptions on the operator $g$:
\par
\textbf{(g1)} $g:H\rightarrow L_{HS}(U,H)$ is a Lipschitz continuous operator, i.e., there exists a constant $L_{g}>0$ such that
$$\|g(u_{1})-g(u_{2})\|_{HS}\leq L_{g}\|u_{1}-u_{2}\|,~\forall u_{1},u_{2}\in H.$$
\par
\textbf{(g2)} There exist constant $K_{i}>0(i=1,2,3),L_{j}>0(j=2,3)$ and $\varrho\in (0,1)$ such that for any $u\in H$, it holds that
\begin{equation*}
\begin{split}
\left\{
\begin{array}{lll}
\textbf{(g2)}(i)~~~\|g(u)\|_{HS}\leq K_{1},\\
\textbf{(g2)}(ii)~~\|g(u)\|_{HS}\leq K_{2}+L_{2}\|u\|^{\varrho},\\
\textbf{(g2)}(iii)~ \|g(u)\|_{HS}\leq K_{3}+L_{3}\|u\|.
\end{array}
\right.
\end{split}
\end{equation*}
\par
\textbf{(g3)} There exists a measurable map $f:H\rightarrow L(H, U)$ such that $\sup\limits_{u\in H}\|f(u)\|_{L(H, U)}<+\infty$ and
$$g(u)f(u)=P_{M},~\forall u\in H$$
for a positive integer $M.$
\par~~
\par
By the similar arguments as in \cite{AC4}, we can establish the well posedness of system \eqref{1} with $\eta$ in \eqref{43} under the conditions (g1) and (g2).
By this fact we can introduce Markov transition semigroup $\{\mathcal{P}_{t}\}_{t\geq 0}$. Using a Krylov-Bogoliubov argument together with tightness, \eqref{1} driven by the following random forces always admits at least one invariant measure, see \cite{DaZ1}. Indeed, when Conditions (g1)-(g3) hold with $L_{3}<1,$ we prove
that Markov transition semigroup $\{\mathcal{P}_{t}\}_{t\geq 0}$ is Feller and admits at least one invariant measure as the similar argument in \cite[Proposition 5.2]{AC4}.
\par~~
\par
Now, we are in a position to present the following results.
\begin{theorem}\label{MT2}
Let $L_{3}<1,$ $h=h(x)\in H$, Conditions (g1)-(g3) hold.
Then there exists an integer $N_{0}\geq1$ such that if (g3) holds for some $M\geq N_{0}$,
then $\{\mathcal{P}_{t}\}_{t\geq 0}$ possesses a unique ergodic invariant measure $\mu\in \mathcal{P}(H),$
where $\{\mathcal{P}_{t}\}_{t\geq 0}$ is the Markov semigroup corresponding to \eqref{1} with $\eta$ in \eqref{43} on $H$.
\end{theorem}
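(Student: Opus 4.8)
The plan is to deduce uniqueness of the invariant measure (which, together with the already-established existence, gives ergodicity) from the generalized/asymptotic coupling criterion of \cite{AC1,AC2}, the Foia\c{s}--Prodi estimate in expectation of Section 5 being the central analytic input. Write $Q_{M}:=I-P_{M}$. I would fix two initial data $u_{0},v_{0}\in H$, let $u$ solve \eqref{1} with $\eta$ in \eqref{43} and datum $u_{0}$, and introduce an auxiliary \emph{nudged} process $v$ solving the same equation with an added feedback control steering its first $M$ modes toward those of $u$:
\begin{equation*}
dv+(Av+B(v))\,dt=h\,dt+\lambda P_{M}(u-v)\,dt+g(v)\,dW,\qquad v(0)=v_{0},
\end{equation*}
for a coupling strength $\lambda>0$ to be fixed. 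The role of (g3) is precisely to realize this control through a shift of the driving noise: since $g(v)f(v)=P_{M}$, one has $\lambda P_{M}(u-v)=g(v)\big[\lambda f(v)(u-v)\big]$, so that $v$ solves $dv+(Av+B(v))\,dt=h\,dt+g(v)\,d\widehat W$ with $d\widehat W:=dW+\lambda f(v)(u-v)\,dt$. Because $\sup_{u}\|f(u)\|_{L(H,U)}<\infty$, the Cameron--Martin shift is controlled by $\|u-v\|$, which is what ultimately makes the change of measure admissible.

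Next I would control the difference $w:=u-v$, which solves $dw+(Aw+B(u)-B(v))\,dt=-\lambda P_{M}w\,dt+(g(u)-g(v))\,dW$. Applying the It\^o formula to $\|w\|^{2}$ and taking expectations, the linear part contributes the coercive term $-2\mathbb{E}\|w\|_{H^{1}}^{2}$, the nudging contributes $-2\lambda\mathbb{E}\|P_{M}w\|^{2}$, while the multiplicative noise produces the It\^o correction $\mathbb{E}\|g(u)-g(v)\|_{HS}^{2}\le L_{g}^{2}\mathbb{E}\|w\|^{2}$ by (g1). Splitting $\|w\|_{H^{1}}^{2}\ge \|P_{M}w\|^{2}+\lambda_{M+1}\|Q_{M}w\|^{2}$ and using the spectral gap $\lambda_{M+1}\to\infty$, the requirement that $\lambda_{M+1}$ dominate $L_{g}^{2}$ determines the threshold $N_{0}$ and forces $M\ge N_{0}$, after which $\lambda$ is chosen large enough that the nudging controls the low modes. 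The nonlinear term $(B(u)-B(v),w)$ is estimated by writing $B(u)-B(v)=uw_{x}+wv_{x}$ and using the one-dimensional embedding $H^{1}(\mathbb{T})\hookrightarrow L^{\infty}(\mathbb{T})$; this produces a random coefficient of the form $C(\|u_{x}\|^{2}+\|v_{x}\|^{2})\|w\|^{2}$, absorbed through a Gronwall argument with an exponential weight together with the a priori energy bounds guaranteed by (g2)(iii) and $L_{3}<1$. This is exactly the content of the Foia\c{s}--Prodi estimate in expectation, whose output I would invoke in the form $\mathbb{E}\|w(t)\|^{2}\le C\|u_{0}-v_{0}\|^{2}e^{-\gamma t}$ for suitable $C,\gamma>0$.

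The decay above furnishes the two ingredients needed by the abstract criterion. First, it gives asymptotic equivalence of the coupled trajectories: $\mathbb{E}\|w(t)\|^{2}\to0$, hence $\|u(t)-v(t)\|\to0$ in probability (and almost surely along a subsequence), so the two marginals merge as $t\to\infty$. Second, it yields finiteness of the control energy, $\mathbb{E}\int_{0}^{\infty}\|\lambda f(v)(u-v)\|_{U}^{2}\,dt\le \lambda^{2}\sup_{u}\|f(u)\|_{L(H,U)}^{2}\,\mathbb{E}\int_{0}^{\infty}\|w(t)\|^{2}\,dt<\infty$; by the Girsanov theorem this makes the law of the nudged process $v$ absolutely continuous with respect to the law of the genuine solution of \eqref{1} started at $v_{0}$. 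Feeding these two facts into the asymptotic coupling theorem of \cite{AC1,AC2} yields uniqueness of the invariant measure, and ergodicity follows.

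The main obstacle is the second step. Unlike the additive-noise case the noises in the two copies do not cancel upon subtraction, so the It\^o correction $\mathbb{E}\|g(u)-g(v)\|_{HS}^{2}\le L_{g}^{2}\mathbb{E}\|w\|^{2}$ acts as a genuinely destabilizing term of the same order as the quantity being estimated. Balancing it against the dissipation requires pushing the spectral cutoff $M$ high enough that $\lambda_{M+1}$ dominates $L_{g}^{2}$ (this is what forces $M\ge N_{0}$) while simultaneously keeping the nonlinear and Gronwall contributions under control, and this is the reason the Foia\c{s}--Prodi estimate in the multiplicative setting is substantially harder than its additive counterpart.
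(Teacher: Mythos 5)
Your proposal contains a genuine gap: it is, in essence, the paper's proof of Theorem \ref{MT3}, and it cannot work under the weaker hypothesis $L_{3}<1$ of Theorem \ref{MT2}. The crux is your claim that the Foia\c{s}--Prodi estimate yields $\mathbb{E}\|w(t)\|^{2}\le C\|u_{0}-v_{0}\|^{2}e^{-\gamma t}$. The paper's Theorem \ref{FPE} gives exponential decay only under the bounded-noise condition (g2)(i); under the linear-growth condition (g2)(iii) it gives only the polynomial rate $\mathbb{E}\|w(t)\|^{2}\le e^{C(1+\|u_{0}\|^{2p}+\|v_{0}\|^{2p})}\,t^{-(\frac{p}{4}-\frac{1}{2})}$ with $p$ restricted to $(2,1+1/L_{3}^{2})$. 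Your Girsanov step needs the control energy $\mathbb{E}\int_{0}^{\infty}\|\lambda f(v)(u-v)\|_{U}^{2}\,dt\lesssim\mathbb{E}\int_{0}^{\infty}\|w(t)\|^{2}\,dt$ to be finite, which forces $\frac{p}{4}-\frac{1}{2}>1$, i.e.\ $p>6$, hence $1+1/L_{3}^{2}>6$, i.e.\ $L_{3}<1/\sqrt{5}$. That is exactly the hypothesis of Theorem \ref{MT3}, and the unstopped-shift argument you describe is exactly how the paper proves that theorem. Under the assumption $L_{3}<1$ of Theorem \ref{MT2}, however, the decay exponent $\frac{p}{4}-\frac{1}{2}$ can be arbitrarily close to $0$ (the problematic range, and the only reason \ref{MT2} and \ref{MT3} are distinct statements, is (g2)(iii) with $1/\sqrt{5}\le L_{3}<1$), the time integral need not converge, the Novikov condition is not verified, and the absolute continuity of the law of the nudged process with respect to the law of the genuine solution is not justified.

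The paper closes this hole by stopping the Girsanov shift rather than integrating it over all time. It introduces $\sigma_{K}:=\inf\{t\ge0:\int_{0}^{t}\|P_{N}(u(s)-v(s))\|^{2}\,ds\ge K\}$ and drives the auxiliary (``nudged stopped'') process by $\tilde{W}=W+\int_{0}^{\cdot}h(s)\mathbf{1}_{s\le\sigma_{K}}\,ds$, so the Cameron--Martin integral is bounded by the deterministic constant $\lambda^{2}\sup_{u}\|f(u)\|^{2}_{L(H,U)}K$; Novikov then holds for every $K$ with no integrability of $\mathbb{E}\|w(t)\|^{2}$ in time required (Proposition \ref{pro16}). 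The price is that the auxiliary process coincides with the genuine nudged solution only on $\{\sigma_{K}=\infty\}$, so one can no longer hope for convergence with probability one. The paper therefore works with the events $B_{n}$, the stopping time $\tau_{R,\beta}$ and the tail bound \eqref{7} on $\mathbb{P}(\tau_{R,\beta}<\infty)$ (for which slow polynomial decay suffices) to show that, for suitable $R^{*},m^{*}$ and $K$, with probability at least $\tfrac12$ the stopping never occurs and $\|u(n)-\tilde{v}(n)\|\to0$ (Proposition \ref{pro15}). This produces a generalized coupling charging the diagonal set $D$ with positive probability, which is all that Theorem \ref{AC1} requires for uniqueness, whereas your route needs full merging of the marginals plus an unstopped change of measure, i.e.\ the hypotheses of Theorem \ref{MT3}. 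Two secondary points: even where exponential decay holds, the constant in \eqref{62} is $e^{C(1+\|u_{0}\|^{2p}+\|v_{0}\|^{2p})}$, not $C\|u_{0}-v_{0}\|^{2}$; and positivity of the probability of the event $\{\lim_{n}\|u(n)-\tilde{v}(n)\|=0\}$ (full-sequence convergence) requires a Borel--Cantelli type argument on the events $B_{n}$, not merely convergence in probability.
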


\begin{theorem}\label{MT3}
Let $L_{3}<\frac{1}{\sqrt{5}},$ $h=h(x)\in H$, Conditions (g1)-(g3) hold.
Then there exists an integer $N_{0}\geq1$ such that if (g3) holds for some $M\geq N_{0}$,
then $\{\mathcal{P}_{t}\}_{t\geq 0}$ possesses a unique ergodic invariant measure $\mu\in \mathcal{P}(H)$ and
\begin{equation*}
\begin{split}
\lim\limits_{t\rightarrow\infty}\|\mathcal{P}_{t}^{*}\delta_{u_{0}}-\mu\|_{L}^{*}=0,~~\forall u_{0}\in H.
\end{split}
\end{equation*}
\end{theorem}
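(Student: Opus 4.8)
The plan is to combine the Foia\c{s}-Prodi estimate in expectation of Section 5 with the asymptotic (generalized) coupling machinery of \cite{AC1,AC2}. Existence of an invariant measure $\mu$ is already available from the Krylov-Bogoliubov argument, and Theorem \ref{MT2} supplies uniqueness, so the only genuinely new point is the convergence $\|\mathcal{P}_t^*\delta_{u_0}-\mu\|_L^*\to0$ for every $u_0\in H$. Since $\mu=\mathcal{P}_t^*\mu$, it suffices to estimate $\|\mathcal{P}_t^*\delta_{u_0}-\mathcal{P}_t^*\delta_{v_0}\|_L^*$ for each pair $(u_0,v_0)$ and then integrate $v_0$ against $\mu$, passing to the limit by dominated convergence (the integrand is bounded by $2$).

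First I would construct the coupling. Let $u$ solve \eqref{1} with $\eta$ as in \eqref{43} started from $u_0$ and driven by $W$, and let $v$ solve the controlled equation started from $v_0$ in which the feedback $\lambda P_M(u-v)$ is added to the drift. By condition (g3), $g(v)f(v)=P_M$, so this feedback is realised as the shift $g(v)f(v)\lambda(u-v)\,dt$ of the noise; under a Girsanov-tilted measure $v$ is then again a genuine solution from $v_0$, so its law is absolutely continuous with respect to $\mathcal{P}_t^*\delta_{v_0}$. The difference $w=u-v$ solves the Foia\c{s}-Prodi difference equation in which the first $M$ Fourier modes are damped at rate $\lambda$.

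Next I would invoke the Foia\c{s}-Prodi estimate in expectation: choosing $M\ge N_0$ large enough that the high-mode dissipation coming from $-\partial_{xx}+1$ dominates, together with $\lambda$ large enough to damp the low modes, it yields $\mathbb{E}[1\wedge\|w(t)\|]\to0$, i.e.\ the coupling is asymptotically successful. Because $\sup_{u}\|f(u)\|_{L(H,U)}<\infty$ by (g3) and $\|w\|\to0$, the control has finite energy, so the Girsanov change of measure is legitimate and the generalized-coupling criterion of \cite{AC1,AC2} applies; combined with the dual-Lipschitz bound $\|\mathcal{P}_t^*\delta_{u_0}-\mathcal{P}_t^*\delta_{v_0}\|_L^*\le C\,\mathbb{E}[1\wedge\|w(t)\|]$ this delivers pairwise convergence, hence the asserted asymptotic stability after integrating in $v_0$.

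The hard part is the Foia\c{s}-Prodi estimate of the third step, which is genuinely more delicate than in the additive case. In the It\^o expansion of $\|w\|^2$ the nonlinear term $B(u)-B(v)=\tfrac12((u+v)w)_x$ produces the random coefficient $\|u\|_{H^1}^2+\|v\|_{H^1}^2$ in front of $\|w\|^2$, and the diffusion contributes the extra correction $\|g(u)-g(v)\|_{HS}^2\le L_g^2\|w\|^2$ that cannot simply be absorbed as when the noise is additive. Closing the inequality therefore requires uniform-in-time exponential moment bounds on $\int_0^t(\|u\|_{H^1}^2+\|v\|_{H^1}^2)\,ds$, which I would obtain from an exponential supermartingale estimate in which the quadratic variation of the energy martingale is controlled through $\|g(u)\|_{HS}^2\le(K_3+L_3\|u\|)^2$. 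It is exactly the balance between this It\^o correction and the dissipation in the exponential estimate that forces the sharper threshold $L_3<1/\sqrt5$, in contrast with the weaker $L_3<1$ that suffices for mere uniqueness in Theorem \ref{MT2}.
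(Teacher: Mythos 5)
Your proposal assembles the right ingredients (the Foia\c{s}-Prodi estimate of Section 5, a Girsanov-tilted nudged process, the generalized-coupling criteria of \cite{AC1,AC2}), but two steps are genuinely flawed. The first is the inequality $\|\mathcal{P}_t^*\delta_{u_0}-\mathcal{P}_t^*\delta_{v_0}\|_L^*\le C\,\mathbb{E}[1\wedge\|w(t)\|]$. This coupling bound is only valid when the pair $(u(t),v(t))$ is a \emph{true} coupling of the two marginals being compared, and here it is not: the nudged process $v$ has law equal to $\mathcal{P}_t^*\delta_{v_0}$ only after the Girsanov change of measure, so on the original probability space its law is merely \emph{equivalent} to $\mathcal{P}_t^*\delta_{v_0}$. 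Splitting off the error, one gets $\|\mathcal{P}_t^*\delta_{u_0}-\mathcal{P}_t^*\delta_{v_0}\|_L^*\le C\,\mathbb{E}[1\wedge\|w(t)\|]+\|\mathrm{Law}(v(t))-\mathcal{P}_t^*\delta_{v_0}\|_L^*$, and the second term is controlled by the Girsanov density but does \emph{not} tend to zero as $t\to\infty$; it stays a fixed positive quantity determined by the total shift energy. This is precisely the obstruction that the Kulik--Scheutzow theorem (Theorem \ref{AC2} in the paper) is designed to bypass: the paper verifies its hypotheses (Feller property, $\pi_2(\xi_{u_0,v_0})\sim\mathbb{P}_{v_0}$ via Girsanov, and $\xi_{u_0,v_0}(D_\varepsilon^n)\to1$ via Chebyshev plus Theorem \ref{FPE}) and obtains uniqueness and the convergence $\|\mathcal{P}_t^*\delta_{u_0}-\mu\|_L^*\to0$ in one stroke, with no pairwise estimate and no integration against $\mu$. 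If your naive pairwise bound were available, the generalized-coupling machinery would be superfluous.

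The second gap is your account of the threshold $L_3<1/\sqrt{5}$ and of the Novikov/finite-energy condition. The Foia\c{s}-Prodi estimate itself (Theorem \ref{FPE}) only requires $L_3<1$; no sharper balance between the It\^o correction and the dissipation is needed there, so the threshold cannot come from that proof. It enters elsewhere: unlike Theorem \ref{MT2}, where the shift is switched off at the stopping time $\sigma_K$ so that its energy is deterministically bounded by $K$, the proof of Theorem \ref{MT3} uses the \emph{unstopped} nudged equation, and absolute continuity of its law on $C([0,\infty);H)$ requires
\begin{equation*}
\mathbb{E}\int_0^\infty\|u(t)-v(t)\|^2\,dt<\infty .
\end{equation*}
Your argument that this follows because ``$\|w\|\to0$'' is insufficient: decay to zero does not imply integrability in time. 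One needs the quantitative rate from \eqref{62}, namely $\mathbb{E}\|u(t)-v(t)\|^2\le C\,t^{-(\frac{p}{4}-\frac{1}{2})}$ under (g2)(ii) or (g2)(iii), to be integrable at infinity, which forces $\frac{p}{4}-\frac{1}{2}>1$, i.e.\ $p>6$. Since \eqref{54} restricts $p<1+\frac{1}{L_3^2}$ under (g2)(iii), compatibility requires $1+\frac{1}{L_3^2}>6$, i.e.\ exactly $L_3<\frac{1}{\sqrt{5}}$. This is the sole place where the sharper hypothesis of Theorem \ref{MT3} is used, and it is the step your proposal leaves unjustified.
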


\section{A new global Carleman estimate for linear KdVB equation}

\subsection{Carleman estimate for linear KdVB equation}
In this paper, Carleman estimate is an important tool to establish the mixing for KdVB equation. Based on it, we develop a general
strategy and framework to deal with a space-time localised noise. In order to prove Theorem \ref{MT}, we combine Carleman estimate with coupling method. It is well known that Carleman estimate is an $L^{2}$-weighted estimate with large parameter for a solution
to a PDE and it is one of the major tools used in the study of unique continuation, observability and controllability problems for various kinds of PDEs (see \cite{Y1,Ta1,F3,K3} and references therein). In this section, we will establish a new global Carleman estimate for linear complex KdVB equation on the tours, which gives a connection between solutions on the whole domain and on an arbitrarily given subdomain.
\par
Let us consider the following linear KdVB equation
\begin{equation}\label{C1}
\begin{cases}
-v_{t}-v_{xxx}-v_{xx}+av_{x}+bv=g\quad &\textrm{in}~\mathbb{T}\times(0,T),
\\v(x,T)=v_{T}(x)\quad &\textrm{in}~\mathbb{T}.
\end{cases}
\end{equation}
\par
Let $\omega=(l_{1},l_{2})$ with $0<l_{1}<l_{2}<2\pi$. Set $D_{\omega}=\omega\times(0,T)$.
Let us pick a weight function $\psi\in C^{\infty}(\mathbb{T})$ with
\begin{equation}\label{C2}
\psi>0~~\textrm{in}~\mathbb{T},~~|\psi^{\prime}|>0,~\psi^{\prime\prime}<0~~\textrm{in}~\mathbb{T}\backslash \omega,~~2\max\limits_{x\in\mathbb{T}}\psi(x)<3\min\limits_{x\in\mathbb{T}}\psi(x).
\end{equation}
An example of the weight function $\psi$ is constructed as following. Pick a function $\phi\in C^{\infty}([0,2\pi])$ is defined on $[0,2\pi]\backslash(l_{1},l_{2})$ as
\begin{equation*}
\phi(x)=
\begin{cases}
-(x+b)^{2}+b^{2} &x\in(0,l_{1}),
\\-(x-2\pi+b)^{2}+b^{2}  &x\in(l_{2},2\pi),
\end{cases}
\end{equation*}
where $b>2\pi-l_{2}$. We define $\psi(x)=\phi(x)+C_{0}$ in $[0,2\pi]$ with $C_{0}>2\hat{\phi}-3\breve{\phi}$, where
$\hat{\phi}:=\max_{x\in [0,2\pi]}\phi(x), \breve{\phi}:=\min_{x\in [0,2\pi]}\phi(x)$. Then, we do a periodic extension of $\psi$
from $[0,2\pi]$ to the entire torus $\mathbb{T}.$ It is easy to see that $\psi$ satisfies \eqref{C2}.
\par
Let us introduce the following functions
\begin{equation*}
\begin{split}
&\xi(t)=\frac{1}{t(T-t)},~~\varphi(x,t)=\psi(x)\xi(t),\\
&\hat{\varphi}(t)=\max\limits_{x\in\mathbb{T}}\psi(x)\xi(t),~\check{\varphi}(t)=\min\limits_{x\in\mathbb{T}}\psi(x)\xi(t).
\end{split}
\end{equation*}
It is easy to see that $2\hat{\varphi}(t)<3\check{\varphi}(t)$ for $t\in(0,T)$, and
\begin{equation*}
\begin{split}
&\partial_{x}^{i}\partial^{j}_{t}\varphi\leq C\varphi^{j+1}~~\textrm{in}~D_{T},~~i=0,1,2,3,4,~j=0,1,\\
& \xi^{k}e^{-\alpha \xi}\leq C,~~\textrm{for}~k\in\mathbb{N},\alpha>0.
\end{split}
\end{equation*}
\par~~
\par
Now, we are in a position to present the Carleman estimate for the linear KdVB equation.
\begin{theorem}\label{CT1}
Let $a,b\in X_{0}$. Then, there exist two positive constants $s_{0}$ and $C$ such that for any $v_{T}\in L^{2}(\mathbb{T})$, $g\in L^{2}(D_{T})$ and any $s\geq s_{0}$, the corresponding solution to \eqref{C1} satisfies:
\begin{equation*}
\int_{D_{T}}(s\xi v_{xx}^{2}+s^{3}\xi^{3}v_{x}^{2}+s^{5}\xi^{5}v^{2})e^{-4s\hat{\varphi}}dxdt
\leq C\Big(s^{5}\int_{D_{T}}g^{2}e^{-2s\hat{\varphi}}dxdt+s^{5}\int_{D_{\omega}}\xi^{5}e^{-6s\check{\varphi}+2s\hat{\varphi}}v^{2}dxdt\Big).
\end{equation*}
\end{theorem}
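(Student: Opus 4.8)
The plan is to prove Theorem \ref{CT1} by the classical conjugation method for Carleman estimates, adapted to the mixed dispersive--dissipative structure of the KdVB operator and to the periodic setting. Write $\theta=e^{-s\varphi}$ with $\varphi=\psi\xi$ and conjugate the operator: set $w=\theta v$, so that $v$ solves \eqref{C1} if and only if $w$ solves $Lw:=\theta(-\partial_t-\partial_{xxx}-\partial_{xx})(\theta^{-1}w)+\theta(a\partial_x+b)(\theta^{-1}w)=\theta g$. Expanding the derivatives produces, alongside $w$ and its derivatives, polynomial factors in $s$ times powers of $\varphi_x=\psi'\xi$, $\varphi_{xx}=\psi''\xi$ and $\varphi_t$; the leading contributions are $-s^3\varphi_x^3 w$, $-3s^2\varphi_x^2 w_x$, $-3s\varphi_x w_{xx}$ from the dispersive term and $-s^2\varphi_x^2 w$ from the dissipative term. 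First I would isolate the principal (third--order) part and defer the lower--order terms $av_x+bv$ and the dissipation $-v_{xx}$ to an absorption step at the end.

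The heart of the argument is to split $Lw=P_1w+P_2w$ into a formally self--adjoint part $P_1$ and a formally skew--adjoint part $P_2$ in $L^2(D_T)$, and to use the identity
\[
\|P_1w\|_{L^2(D_T)}^2+\|P_2w\|_{L^2(D_T)}^2+2\,(P_1w,P_2w)_{L^2(D_T)}=\|\theta g\|_{L^2(D_T)}^2 .
\]
The cross term $2(P_1w,P_2w)$ is then expanded by integration by parts over $D_T$. On the torus the spatial integrations generate no boundary terms (periodicity), and the time--endpoint terms vanish because $\xi(t)\to\infty$ as $t\to0^+,T^-$ forces $\theta$ and all its derivatives to vanish to infinite order at $t=0,T$; here the elementary bounds $\partial_x^i\partial_t^j\varphi\le C\varphi^{j+1}$ and $\xi^ke^{-\alpha\xi}\le C$ are used repeatedly. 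After integration by parts the dominant contributions are $\int_{D_T}(s\xi\,w_{xx}^2+s^3\xi^3w_x^2+s^5\xi^5w^2)$, with positive coefficients precisely on the region where $|\psi'|>0$ and $\psi''<0$, i.e. on $\mathbb{T}\setminus\omega$; this is exactly where the sign hypotheses \eqref{C2} on $\psi$ are needed. On $D_\omega$ the coefficients are not sign--definite, so those contributions are transferred to the right-hand side, producing the localized observation term.

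Next I would close the estimate. The dissipative term $-v_{xx}$ generates contributions of the form $s^2\xi^2 w^2$, $s\xi w_x^2$ and $w_{xx}^2$ that carry strictly lower powers of $s\xi$ than the three dominant terms, so for $s\ge s_0$ large they are absorbed into the left-hand side; the same is true of the lower-order terms $av_x+bv$, using $a,b\in X_0$ (so that $a(t),b(t)\in H^1(\mathbb{T})\hookrightarrow L^\infty(\mathbb{T})$ with $L^\infty$-norms in $L^2_t$) together with a weighted Young inequality and the smallness of negative powers of $s$. Undoing the substitution $w=\theta v$ turns $w,w_x,w_{xx}$ back into $e^{-s\varphi}$ times $v$ and its derivatives, modulo absorbable terms, which yields a Carleman estimate carrying the single space--time weight $e^{-2s\varphi}$ on every integral. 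The final step aims to pass from $e^{-2s\varphi}$ to the stated time--only weights: on the left one uses $\varphi\le\hat\varphi$ to replace $e^{-2s\varphi}$ by the smaller $e^{-4s\hat\varphi}$, while on the right the source and observation integrals are reorganized using $\check\varphi\le\varphi\le\hat\varphi$ and, crucially, the structural inequality $2\hat\varphi<3\check\varphi$ coming from \eqref{C2}, the polynomial losses being absorbed into the extra factor $s^5$.

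I expect the main obstacle to be twofold. First, unlike the pure KdV or pure parabolic cases, the simultaneous presence of the skew-adjoint dispersive term $-v_{xxx}$ and the self-adjoint dissipative term $-v_{xx}$ means the two groups of conjugated terms interact in the cross product, and one must verify that the dissipative contributions genuinely remain lower order and do not spoil the positivity of the three leading terms --- this is the point at which the classical scheme breaks down and a new bookkeeping is required. Second, the conversion to the asymmetric weights $e^{-4s\hat\varphi}$, $e^{-2s\hat\varphi}$ and $e^{-6s\check\varphi+2s\hat\varphi}$ is delicate, since a naive pointwise comparison between $e^{-2s\varphi}$ and $e^{-2s\hat\varphi}$ fails; the gap $2\hat\varphi<3\check\varphi$ is exactly what is needed so that the observation weight $e^{-6s\check\varphi+2s\hat\varphi}$ dominates $e^{-4s\hat\varphi}$ while remaining controlled by the source weight, and making these exponents line up is where the argument is most technical.
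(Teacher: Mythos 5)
Your steps (1)--(3) --- conjugating the full operator with $w=e^{-s\varphi}v$, splitting into self-adjoint and skew-adjoint parts, and absorbing the dissipative and lower-order contributions for large $s$ --- are plausible in outline (the mixed dispersive--dissipative cross terms are of intermediate order, e.g. $s^{4}\varphi_x^{3}\varphi_{xx}w^{2}$ and $s^{2}\varphi_x\varphi_{xx}w_x^{2}$, hence absorbable), although you leave unaddressed both the regularity needed to run the computation (with $v_T\in L^{2}$, $g\in L^{2}$ the solution is only in $X_{0}$, far from $L^{2}(0,T;H^{3})\cap H^{1}(0,T;L^{2})$) and the removal of the derivative terms $w_x,w_{xx}$ from the local term on $D_\omega$, which requires an interpolation argument since the theorem's observation term contains only $v^{2}$. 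The fatal problem is your final step. A classical conjugation proof delivers every integral with the single weight $e^{-2s\varphi}$, in particular the source term $\int_{D_T}g^{2}e^{-2s\varphi}dxdt$, whereas the theorem requires $s^{5}\int_{D_T}g^{2}e^{-2s\hat{\varphi}}dxdt$. Since $\hat{\varphi}-\varphi=(\max_{x}\psi-\psi(x))\xi(t)$, the ratio of these weights is $e^{2s(\hat{\varphi}-\varphi)}$, which blows up exponentially in $s\xi$ (not polynomially) as $t\to 0^{+},T^{-}$ at any $x$ with $\psi(x)<\max\psi$; taking $g$ concentrated near such a point shows that no factor $Cs^{k}$ can repair this. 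The same exponential obstruction kills the observation term: you would need $e^{-2s\check{\varphi}}\lesssim s^{k}e^{-6s\check{\varphi}+2s\hat{\varphi}}$ on $D_\omega$, i.e. $e^{2s(2\check{\varphi}-\hat{\varphi})}\lesssim s^{k}$, which is false because \eqref{C2} forces $\hat{\varphi}<\tfrac{3}{2}\check{\varphi}<2\check{\varphi}$. So the gap condition $2\hat{\varphi}<3\check{\varphi}$ cannot play the role you assign to it; it is not a device for comparing $e^{-2s\varphi}$ with the time-only weights.

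This is exactly why the paper does not take the classical route. The paper never conjugates the dissipative term at all: Lemma \ref{CL2} is a Carleman estimate for the pure operator $Lq=q_t+q_{xxx}$, and Theorem \ref{CT1} is proved by introducing the time-only weight $\rho(t)=e^{-s\hat{\varphi}(t)}$ at the outset and decomposing $\rho v=q+z$ via \eqref{C6}--\eqref{C7}: the component $z$ carries the source $\rho g$ and is estimated by the \emph{energy} estimates of Lemma \ref{CL1} --- this, and not any weight comparison, is how the factor $e^{-2s\hat{\varphi}}$ attaches to $g$ --- while $q$ carries $-\rho_t v$ and is estimated by Lemma \ref{CL2}, with $-q_{xx}+aq_x+bq$ treated as an absorbable right-hand side. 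The asymmetric weights then arise structurally rather than by conversion: $e^{-4s\hat{\varphi}}=\rho^{2}e^{-2s\hat{\varphi}}$ on the left, the observation weight $e^{-6s\check{\varphi}+2s\hat{\varphi}}$ picks up $\rho^{2}$ when $q$ is replaced by $\rho v-z$, and the condition $2\hat{\varphi}<3\check{\varphi}$ is used only to ensure $-6\check{\varphi}+4\hat{\varphi}<0$ so that the $z$-contribution on $D_\omega$ stays bounded. Any attempt to salvage a one-step conjugation proof would have to build a time-only component into the weight from the start, which in effect reproduces the paper's decomposition.
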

\par
To prove Theorem \ref{CT1}, we need the following two lemmas.

\begin{lemma}\label{CL1}
Let $a,b\in X_{0}$, $h\in L^{2}(0,T;H^{1}(\mathbb{T}))$. There exists a positive constant $C=C(\|a\|_{X_{0}},\|b\|_{X_{0}})$ such that the solution $y$ of the system
\begin{equation}\label{C12}
\begin{cases}
y_{t}+y_{xxx}-y_{xx}+ay_{x}+by=h \quad &\textrm{in}~D_{T},
\\y(x,0)=u_{0}\quad &\textrm{in}~\mathbb{T},
\end{cases}
\end{equation}
satisfies
\begin{equation*}
\|y\|_{X_{i}}\leq C(\|u_{0}\|_{i}+\|h\|_{L^{2}(0,T;H^{i-1}(\mathbb{T}))})~~\textrm{for}~~i=0,1,2.
\end{equation*}
\end{lemma}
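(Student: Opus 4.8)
The plan is to establish the three estimates $(i=0,1,2)$ by the energy method, deriving a priori bounds that are made rigorous through a Galerkin approximation in the basis $\{e_j\}$ (where the truncated problem is a linear ODE system with globally defined solution), and then passing to the limit using the uniform bounds; uniqueness follows from linearity together with the $i=0$ estimate applied to the difference of two solutions.

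For $i=0$ I would test \eqref{C12} against $y$ and integrate over $\mathbb{T}$. Three structural features of the operator $A$ drive the estimate: the dispersive term is skew-symmetric, so $\int_{\mathbb{T}} y_{xxx}\,y\,dx=0$; the Burgers dissipation is coercive, $\int_{\mathbb{T}}(-y_{xx})\,y\,dx=\|y_x\|^{2}$, which is exactly the term producing the $L^{2}(0,T;H^{1})$ gain; and the remaining contributions $\int_{\mathbb{T}} a\,y_x\,y\,dx$ and $\int_{\mathbb{T}} b\,y^{2}\,dx$ are genuinely lower order. Since $a,b\in X_{0}$ only guarantees $L^{2}$-in-space regularity pointwise in time together with $a_x,b_x\in L^{2}(0,T;L^{2})$, I would rewrite $\int_{\mathbb{T}} a\,y_x\,y\,dx=-\tfrac12\int_{\mathbb{T}} a_x\,y^{2}\,dx$ and bound both lower-order terms using the one-dimensional embedding $H^{1}(\mathbb{T})\hookrightarrow L^{\infty}(\mathbb{T})$ and a Gagliardo--Nirenberg inequality, so that each is dominated by $\varepsilon\|y_x\|^{2}$ plus $m(t)\|y\|^{2}$ with $m\in L^{1}(0,T)$ depending on $\|a\|_{X_{0}},\|b\|_{X_{0}}$. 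Absorbing $\varepsilon\|y_x\|^{2}$ into the dissipation and treating the forcing by $\int_{\mathbb{T}} h\,y\,dx=\langle h,y\rangle_{H^{-1},H^{1}}\le\|h\|_{H^{-1}}\|y\|_{H^{1}}$ (again with Young's inequality), Gronwall's lemma closes the estimate and yields both the $C([0,T];L^{2})$ and $L^{2}(0,T;H^{1})$ bounds.

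For $i=1$ and $i=2$ the scheme is identical after differentiating the equation $i$ times in $x$ and testing against $\partial_x^{i}y$: skew-symmetry of $\partial_{xxx}$ is preserved under commutation, the dissipation now produces $\|\partial_x^{i+1}y\|^{2}$, and the forcing term $\int_{\mathbb{T}}\partial_x^{i}h\,\partial_x^{i}y\,dx$ is handled via the duality bound $\langle\partial_x^{i}h,\partial_x^{i}y\rangle\le C\|h\|_{H^{i-1}}\|y\|_{H^{i+1}}$ followed by Young's inequality. The new feature is the commutator terms arising from $\partial_x^{i}(a\,y_x)$ and $\partial_x^{i}(b\,y)$, which by the Leibniz rule generate products of derivatives of $a,b$ against derivatives of $y$. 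Each such term must be distributed, using the 1D Sobolev embeddings and interpolation, as $\varepsilon\|\partial_x^{i+1}y\|^{2}$ plus an $L^{1}(0,T)$ coefficient times $\|y\|_{H^{i}}^{2}$, after which Gronwall's lemma and the previously established lower-order $(i-1)$ bound close the estimate.

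The main obstacle is precisely the control of these lower-order and commutator terms under the weak regularity $a,b\in X_{0}$: because $a,b$ are merely square-integrable in space at almost every time, one cannot place them in $L^{\infty}$, and $a_x,b_x$ are only in $L^{2}(0,T;L^{2})$. The delicate point is to arrange every product so that the top-order derivative of $y$ carries a coefficient that is small (to be absorbed by the single available dissipative term $\|\partial_x^{i+1}y\|^{2}$) while the leftover time-dependent coefficient is integrable on $(0,T)$; the borderline cases occur at $i=2$, where balancing $a_{xx}$- or $b_{xx}$-type factors against derivatives of $y$ requires careful use of Gagliardo--Nirenberg in one dimension. All formal manipulations are justified at the Galerkin level, where the approximants are smooth, and the resulting uniform bounds survive passage to the limit.
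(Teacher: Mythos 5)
Your strategy is in substance the paper's own proof: the paper multiplies the undifferentiated equation by $y$, $-y_{xx}$ and $y_{xxxx}$, which after integration by parts is exactly your ``differentiate $i$ times and test against $\partial_x^i y$'' scheme, and it closes each level with the one-dimensional embedding $H^{1}(\mathbb{T})\hookrightarrow L^{\infty}(\mathbb{T})$, Young's inequality and Gronwall's lemma, just as you propose (the Galerkin scaffolding and the uniqueness remark are additions the paper leaves implicit).

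There is, however, one step at $i=2$ that would fail as you wrote it. You plan to expand $\partial_x^{2}(a y_x)$ and $\partial_x^{2}(b y)$ by Leibniz and to control the resulting ``$a_{xx}$- or $b_{xx}$-type factors'' by Gagliardo--Nirenberg. For $a,b\in X_{0}=C([0,T];L^{2}(\mathbb{T}))\cap L^{2}(0,T;H^{1}(\mathbb{T}))$ the second spatial derivatives $a_{xx},b_{xx}$ do not exist as functions (at a.e.\ time they are only distributions in $H^{-1}$), so there is no norm of them to insert into any interpolation inequality; Gagliardo--Nirenberg cannot repair this, and working at the Galerkin level does not help either, since the obstruction sits on the coefficients, not on $y$. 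The correct move is never to produce these terms: integrate by parts so that at most one derivative falls on $a$ and $b$, e.g.\ $\langle \partial_x^{2}(a y_x),y_{xx}\rangle=-\int_{\mathbb{T}}(a_x y_x+a y_{xx})\,y_{xxx}\,dx$, which is exactly what the paper's multiplier formulation achieves automatically: testing against $y_{xxxx}$ and integrating by parts once yields only $\|a_x\|\,\|y_x\|_{\infty}\,\|y_{xxx}\|$ and $\|a\|_{\infty}$-type terms, absorbed by Young and Gronwall. A second (harmless) misstatement: you claim $a,b$ cannot be placed in $L^{\infty}$ at a.e.\ time, but $a(t)\in H^{1}(\mathbb{T})\hookrightarrow L^{\infty}(\mathbb{T})$ for a.e.\ $t$ with $\|a(t)\|_{\infty}^{2}$ integrable in time, and the paper's estimates rest precisely on this (e.g.\ $\|a\|_{\infty}\|y_x\|\,\|y\|\leq \tfrac14\|y_x\|^{2}+C\|a\|_{1}^{2}\|y\|^{2}$); your detour through $\int_{\mathbb{T}} a_x y^{2}\,dx$ at $i=0$ is valid but unnecessary.
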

\par

We introduce the operator $L$ defined by $Lq=q_{t}+q_{xxx}$ with its domain
$$\mathcal{D}(L)=L^{2}(0,T;H^{3}(\mathbb{T}))\cap H^{1}(0,T;L^{2}(\mathbb{T})).$$
\begin{lemma}\label{CL2}
There exists a positive constant $s_{1}$ such that for any $q\in \mathcal{D}(L)$ and $s\geq s_{1}$, we have
\begin{equation}\label{C8}
\begin{split}
&\int_{D_{T}}(s\varphi w_{xx}^{2}+s^{3}\varphi^{3}w_{x}^{2}+s^{5}\varphi^{5}w^{2})dxdt\\
\leq&C\int_{D_{T}}|Lq|^{2}e^{-2s\varphi}dxdt+C\int_{D_{\omega}}(s\varphi w_{xx}^{2}+s^{3}\varphi^{3}w_{x}^{2}+s^{5}\varphi^{5}w^{2})dxdt,
\end{split}
\end{equation}
where $w=e^{-s \varphi}q$.
\end{lemma}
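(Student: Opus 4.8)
The plan is to establish \eqref{C8} by the conjugation-and-splitting scheme that is standard for Carleman estimates, here adapted to the third-order operator $L=\partial_t+\partial_x^3$. Writing $Pw:=e^{-s\varphi}L(e^{s\varphi}w)=e^{-s\varphi}Lq$, we have $\|Pw\|_{L^2(D_T)}^2=\int_{D_T}|Lq|^2e^{-2s\varphi}\,dx\,dt$, which is exactly the first term on the right-hand side of \eqref{C8}. Thus the whole problem reduces to bounding the weighted energy of $w$ on the left of \eqref{C8} by $\|Pw\|^2$ together with a localised term supported on $D_\omega$.

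First I would differentiate $q=e^{s\varphi}w$ and collect terms to obtain
\[
Pw=w_t+w_{xxx}+3s\varphi_x w_{xx}+3s^2\varphi_x^2 w_x+3s\varphi_{xx}w_x+s^3\varphi_x^3 w+\big(s\varphi_t+s\varphi_{xxx}+3s^2\varphi_x\varphi_{xx}\big)w.
\]
I then regroup $Pw=P_1w+P_2w+Rw$, where $P_2$ gathers the skew-adjoint pieces (the pair $w_t$, $w_{xxx}$ and the combination $3s^2\varphi_x^2 w_x+3s^2\varphi_x\varphi_{xx}w$, which is exactly skew-adjoint), $P_1$ gathers the self-adjoint pieces (the zeroth-order term $s^3\varphi_x^3 w$ and the combination $3s\varphi_x w_{xx}+3s\varphi_{xx}w_x+\tfrac32 s\varphi_{xxx}w$), and $Rw$ is a genuine remainder of order at most $s\varphi^2 w$. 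Using $\partial_x^i\partial_t^j\varphi\le C\varphi^{j+1}$ and the fact that $\varphi$ is bounded below by a positive constant, one checks $\|Rw\|^2\le Cs^2\int_{D_T}\varphi^4 w^2\le Cs^{-3}\int_{D_T}s^5\varphi^5 w^2$. Expanding $\|P_1w+P_2w\|^2=\|P_1w\|^2+\|P_2w\|^2+2(P_1w,P_2w)\ge 2(P_1w,P_2w)$ and bounding $\|P_1w+P_2w\|^2=\|Pw-Rw\|^2\le 2\|Pw\|^2+2\|Rw\|^2$ then reduces the estimate to a lower bound for the cross term $2(P_1w,P_2w)$.

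The core of the argument is the computation of $2(P_1w,P_2w)$ by integration by parts. Every boundary term in $x$ vanishes by periodicity on $\mathbb{T}$, and every boundary term at $t=0,T$ vanishes because $\varphi=\psi\xi$ with $\xi(t)=1/(t(T-t))$ blows up at the endpoints, so $w$ and all its derivatives decay to zero there. I expect the three dominant contributions to be: the pairing of $3s\varphi_x w_{xx}$ with $w_{xxx}$, giving $\tfrac32 s(-\varphi_{xx})w_{xx}^2$ after one integration by parts; the pairing of $s^3\varphi_x^3 w$ with $3s^2\varphi_x^2 w_x$, giving $\tfrac{15}{2}s^5\varphi_x^4(-\varphi_{xx})w^2$; and the intermediate pairings producing a term of the form $s^3\varphi_x^2(-\varphi_{xx})w_x^2$. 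Since $\varphi_{xx}=\psi''\xi$ and, by \eqref{C2}, $\psi''<0$ and $|\psi'|>0$ on $\mathbb{T}\setminus\omega$, each of these coefficients is bounded below by a positive multiple of $s\varphi$, $s^3\varphi^3$, $s^5\varphi^5$ respectively on $\mathbb{T}\setminus\omega$. On $\omega$ the sign conditions of \eqref{C2} may fail, so those contributions are not controllable and are instead estimated from below by subtracting a constant times $\int_{D_\omega}(s\varphi w_{xx}^2+s^3\varphi^3 w_x^2+s^5\varphi^5 w^2)$; this is exactly the localised term on the right of \eqref{C8}.

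Finally I would collect all the remaining cross terms together with $\|Rw\|^2$. Each of these is of strictly lower order in $s$ (or carries lower powers of $\xi$, controlled via $\xi^k e^{-\alpha\xi}\le C$ and $\partial_x^i\partial_t^j\varphi\le C\varphi^{j+1}$) than one of the three principal terms, so all of them are absorbed by choosing $s\ge s_1$ large enough; rearranging then yields \eqref{C8}. The main obstacle is the bookkeeping in the cross-term expansion: one must track the large number of lower-order terms and verify that, after using the derivative bounds on $\varphi$, each is genuinely dominated on $\mathbb{T}\setminus\omega$ by the three positive principal terms, so that the absorption for large $s$ is legitimate. A secondary technical point is the rigorous justification that the temporal boundary terms vanish, which relies on the super-polynomial decay of $e^{-s\varphi}$ as $t\to 0,T$.
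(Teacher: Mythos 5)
Your proposal is correct and follows essentially the same route as the paper's proof: the paper conjugates $L$ by $e^{-s\varphi}$ and splits the result into $I_{1}+I_{2}+I_{3}$, where $I_{1}$ is exactly your skew-adjoint $P_{2}$, $I_{2}$ differs from your $P_{1}$ only by the lower-order term $\tfrac32 s\varphi_{xxx}w$ (kept in the remainder $I_{3}$ instead), and then bounds $2\int I_{1}I_{2}$ via $\|I_{1}+I_{2}\|^{2}\leq 2\|e^{-s\varphi}Lq\|^{2}+2\|I_{3}\|^{2}$, computes the cross terms by parts to obtain the principal terms $-9s\varphi_{xx}w_{xx}^{2}-18s^{3}\varphi_{x}^{2}\varphi_{xx}w_{x}^{2}-9s^{5}\varphi_{x}^{4}\varphi_{xx}w^{2}$, and uses \eqref{C2} plus absorption for large $s$ exactly as you describe. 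The only differences are cosmetic bookkeeping (the precise self-adjoint grouping and the resulting numerical constants), not a different argument.
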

\par
Now we can give the proof of Theorem \ref{CT1}.
\begin{proof}[Proof of Theorem \ref{CT1}]
We decompose the solution $v$ of \eqref{C1}. In other words, let us consider the following systems
\begin{equation}\label{C6}
\begin{cases}
-q_{t}-q_{xxx}-q_{xx}+aq_{x}+bq=-\rho_{t}v\quad &\textrm{in}~D_{T},
\\q(x,T)=0\quad &\textrm{in}~\mathbb{T},
\end{cases}
\end{equation}
and
\begin{equation}\label{C7}
\begin{cases}
-z_{t}-z_{xxx}-z_{xx}+az_{x}+bz=\rho g\quad &\textrm{in}~D_{T},
\\z(x,T)=0\quad &\textrm{in}~\mathbb{T},
\end{cases}
\end{equation}
where $\rho(t)=e^{-s\hat{\varphi}(t)}$. By uniqueness for the linear KdVB equation, we have
\begin{equation}\label{C9}
\rho v=q+z.
\end{equation}
\par

\par
Considering the definition of operator $L$ and system \eqref{C6}, we can deduce that
\begin{equation*}
\begin{split}
e^{-s\varphi}Lq=&e^{-s\varphi}(\rho_{t}v-q_{xx}+aq_{x}+bq)\\
=&e^{-s\varphi}\rho_{t}v-w_{xx}+(-2s\varphi_{x}+a)w_{x}+(-s\varphi_{xx}-s^{2}\varphi_{x}^{2}+as\varphi_{x}+b)w.
\end{split}
\end{equation*}
It is not difficult to obtain that
\begin{equation*}
\begin{split}
\int_{D_{T}}(-2s\varphi_{x}+a)^{2}w^{2}_{x}dxdt
\leq& Cs^{2}\int_{D_{T}}\varphi^{2}w_{x}^{2}dxdt+C\int_{0}^{T}\|a\|^{2}\|w_{x}\|^{2}_{\infty}dt\\
\leq& Cs^{2}\int_{D_{T}}\varphi^{2}w_{x}^{2}dxdt+C\|a\|_{X_{0}}^{2}\int_{D_{T}}(w_{x}^{2}+w_{xx}^{2})dxdt,
\end{split}
\end{equation*}
\begin{equation*}
\begin{split}
&\int_{D_{T}}(-s\varphi_{xx}-s^{2}\varphi_{x}^{2}+as\varphi_{x}+b)^{2}w^{2}dxdt\\
\leq& Cs^{4}\int_{D_{T}}\varphi^{4}w^{2}dxdt+C s^{2}\int_{0}^{T}\|a\|^{2}\|\varphi_{x}w\|^{2}_{\infty}dt+C\int_{0}^{T}\|b\|^{2}\|w\|^{2}_{\infty}dt\\
\leq& Cs^{4}\int_{D_{T}}\varphi^{4}w^{2}dxdt+Cs^{2}\|a\|_{X_{0}}^{2}\int_{D_{T}}\varphi^{2}(w^{2}+w_{x}^{2})dxdt+C\|b\|_{X_{0}}^{2}\int_{D_{T}}(w^{2}+w_{x}^{2})dxdt.
\end{split}
\end{equation*}
Thus we have
\begin{equation*}
\begin{split}
\int_{D_{T}}|Lq|^{2}e^{-2s\varphi}dxdt\leq& Cs^{2}\int_{D_{T}}\xi^{4}e^{-2s\hat{\varphi}}e^{-2s\varphi}v^{2}dxdt\\
&+C(\|a\|_{X_{0}}^{2}+\|b\|_{X_{0}}^{2}+1)\int_{D_{T}}(w_{xx}^{2}+s^{2}\varphi^{2}w_{x}^{2}+s^{4}\varphi^{4}w^{2})dxdt.
\end{split}
\end{equation*}
Substituting this inequality into \eqref{C8}, picking $s\gg1$, we arrive to
\begin{equation*}
\begin{split}
&\int_{D_{T}}(s\varphi w_{xx}^{2}+s^{3}\varphi^{3}w_{x}^{2}+s^{5}\varphi^{5}w^{2})dxdt\\
\leq&Cs^{2}\int_{D_{T}}\xi^{4}e^{-2s\hat{\varphi}}e^{-2s\varphi}v^{2}dxdt+C\int_{D_{\omega}}(s\varphi w_{xx}^{2}+s^{3}\varphi^{3}w_{x}^{2}+s^{5}\varphi^{5}w^{2})dxdt.
\end{split}
\end{equation*}
Replacing $w$ by $e^{-s\varphi}q$ and noting \eqref{C9}, we have
\begin{equation*}
\begin{split}
&\int_{D_{T}}(s\varphi q_{xx}^{2}+s^{3}\varphi^{3}q_{x}^{2}+s^{5}\varphi^{5}q^{2})e^{-2s\varphi}dxdt\\
\leq&Cs^{2}\int_{D_{T}}\xi^{4}e^{-2s\varphi}(q^{2}+z^{2})dxdt+C\int_{D_{\omega}}(s\varphi q_{xx}^{2}+s^{3}\varphi^{3}q_{x}^{2}+s^{5}\varphi^{5}q^{2})e^{-2s\varphi}dxdt.
\end{split}
\end{equation*}
Taking $s_{0}$ sufficiently large, for $s\geq s_{0}$, it follows that
\begin{equation}\label{C10}
\begin{split}
&\int_{D_{T}}(s\xi q_{xx}^{2}+s^{3}\xi^{3}q_{x}^{2}+s^{5}\xi^{5}q^{2})e^{-2s\hat{\varphi}}dxdt\\
\leq&Cs^{2}\|z\|^{2}_{L^{2}(0,T;L^{2}(\mathbb{T}))}+C\int_{D_{\omega}}(s\xi q_{xx}^{2}+s^{3}\xi^{3}q_{x}^{2}+s^{5}\xi^{5}q^{2})e^{-2s\check{\varphi}}dxdt.
\end{split}
\end{equation}
\par
Using the fact that $H^{1}(\omega)=(H^{3}(\omega),L^{2}(\omega))_{2/3,2}$ and $H^{2}(\omega)=(H^{3}(\omega),L^{2}(\omega))_{1/3,2}$, we can obtain that
\begin{equation*}
\begin{split}
s^{3}\int_{D_{\omega}}\xi^{3}q_{x}^{2}e^{-2s\check{\varphi}}dxdt\leq& Cs^{3}\int_{0}^{T}\xi^{3}\|q\|^{4/3}_{L^{2}(\omega)}\|q\|^{2/3}_{H^{3}(\omega)}e^{-2s\check{\varphi}}dt\\
\leq& Cs^{5}\int_{0}^{T}\xi^{5}\|q\|^{2}_{L^{2}(\omega)}e^{-3s\check{\varphi}+s\hat{\varphi}}dt+
C s^{-1}\int_{0}^{T}\xi^{-1}\|q\|^{2}_{H^{3}(\omega)}e^{-2s\hat{\varphi}}dt,\\
s\int_{D_{\omega}}\xi q_{xx}^{2}e^{-2s\check{\varphi}}dxdt\leq& Cs\int_{0}^{T}\xi\|q\|^{2/3}_{L^{2}(\omega)}\|q\|^{4/3}_{H^{3}(\omega)}e^{-2s\check{\varphi}}dt\\
\leq& Cs^{5}\int_{0}^{T}\xi^{5}\|q\|^{2}_{L^{2}(\omega)}e^{-6s\check{\varphi}+4s\hat{\varphi}}dt+
C s^{-1}\int_{0}^{T}\xi^{-1}\|q\|^{2}_{H^{3}(\omega)}e^{-2s\hat{\varphi}}dt.
\end{split}
\end{equation*}
Substituting this estimates into \eqref{C10}, we infer that
\begin{equation}\label{C11}
\begin{split}
&\int_{D_{T}}(s\xi q_{xx}^{2}+s^{3}\xi^{3}q_{x}^{2}+s^{5}\xi^{5}q^{2})e^{-2s\hat{\varphi}}dxdt\\
\leq&Cs^{2}\|z\|^{2}_{L^{2}(0,T;L^{2}(\mathbb{T}))}+Cs^{5}\int_{0}^{T}\xi^{5}\|q\|^{2}_{L^{2}(\omega)}e^{-6s\check{\varphi}+4s\hat{\varphi}}dt+
C s^{-1}\int_{0}^{T}\xi^{-1}\|q\|^{2}_{3}e^{-2s\hat{\varphi}}dt.
\end{split}
\end{equation}
In order to estimate the last term in the right hand side, we define
\begin{equation*}
\hat{q}=\hat{\rho}q~~\textrm{with}~~\hat{\rho}(t):=s^{-1/2}\xi^{-1/2}(t)e^{-s\hat{\varphi}(t)}.
\end{equation*}
From \eqref{C6}, we can see that $\hat{q}$ is the solution of the following system
\begin{equation*}
\begin{cases}
-\hat{q}_{t}-\hat{q}_{xxx}-\hat{q}_{xx}+a\hat{q}_{x}+b\hat{q}=-\hat{\rho}\rho_{t}v-\hat{\rho}_{t}q:=\hat{g}\quad &\textrm{in}~D_{T},
\\\hat{q}(x,T)=0\quad &\textrm{in}~\mathbb{T}.
\end{cases}
\end{equation*}
By Lemma \ref{CL1} with $y(x,t)=\hat{q}(2\pi-x,T-t)$ and $h(x,t)=\hat{g}((2\pi-x,T-t))$, we have
\begin{equation*}
\begin{split}
\|\hat{q}\|^{2}_{L^{2}(0,T;H^{3}(\mathbb{T}))}\leq& C\|\hat{g}\|^{2}_{L^{2}(0,T;H^{1}(\mathbb{T}))}\\
\leq& Cs\int_{0}^{T}\xi^{3}e^{-4s\hat{\varphi}}\|v\|_{1}^{2}dt
+Cs\int_{0}^{T}\xi^{3}e^{-2s\hat{\varphi}}\|q\|_{1}^{2}dt\\
\leq&Cs\int_{0}^{T}\xi^{3}e^{-2s\hat{\varphi}}\|z\|_{1}^{2}dt+Cs\int_{D_{T}}\xi^{3}e^{-2s\hat{\varphi}}(q^{2}+q_{x}^{2})dxdt.
\end{split}
\end{equation*}
This implies that
\begin{equation*}
\begin{split}
s^{-1}\int_{0}^{T}\xi^{-1}\|q\|^{2}_{3}e^{-2s\hat{\varphi}}dt\leq Cs\|z\|^{2}_{L^{2}(0,T;H^{1}(\mathbb{T}))}+Cs\int_{D_{T}}\xi^{3}e^{-2s\hat{\varphi}}(q^{2}+q_{x}^{2})dxdt.
\end{split}
\end{equation*}
Combining this with \eqref{C11}, taking $s$ sufficiently large, the last term in the above inequality can be absorbed by the left hand side of \eqref{C11}. Thus we obtain that
\begin{equation*}
\int_{D_{T}}(s\xi q_{xx}^{2}+s^{3}\xi^{3}q_{x}^{2}+s^{5}\xi^{5}q^{2})e^{-2s\hat{\varphi}}dxdt
\leq Cs^{2}\|z\|^{2}_{L^{2}(0,T;H^{1}(\mathbb{T}))}+Cs^{5}\int_{D_{\omega}}\xi^{5}q^{2}e^{-6s\check{\varphi}+4s\hat{\varphi}}dxdt.
\end{equation*}
By system \eqref{C7} and Lemma \ref{CL1}, we can deduce that
\begin{equation*}
\begin{split}
&\int_{D_{T}}(s\xi v_{xx}^{2}+s^{3}\xi^{3}v_{x}^{2}+s^{5}\xi^{5}v^{2})e^{-4s\hat{\varphi}}dxdt\\
=&\int_{D_{T}}(s\xi (q_{xx}+z_{xx})^{2}+s^{3}\xi^{3}(q_{x}+z_{x})^{2}+s^{5}\xi^{5}(q+z)^{2})e^{-2s\hat{\varphi}}dxdt\\
\leq& Cs^{5}\|z\|^{2}_{L^{2}(0,T;H^{2}(\mathbb{T}))}+C\int_{D_{T}}(s\xi q_{xx}^{2}+s^{3}\xi^{3}q_{x}^{2}+s^{5}\xi^{5}q^{2})e^{-2s\hat{\varphi}}dxdt\\
\leq&Cs^{5}\|z\|^{2}_{L^{2}(0,T;H^{2}(\mathbb{T}))}+Cs^{5}\int_{D_{\omega}}\xi^{5}(e^{-s\hat{\varphi}}v-z)^{2}e^{-6s\check{\varphi}+4s\hat{\varphi}}dxdt\\
\leq&Cs^{5}\|z\|^{2}_{L^{2}(0,T;H^{2}(\mathbb{T}))}+Cs^{5}\int_{D_{\omega}}\xi^{5}v^{2}e^{-6s\check{\varphi}+2s\hat{\varphi}}dxdt\\
\leq&Cs^{5}\int_{D_{T}}g^{2}e^{-2s\hat{\varphi}}dxdt+ Cs^{5}\int_{D_{\omega}}\xi^{5}v^{2}e^{-6s\check{\varphi}+2s\hat{\varphi}}dxdt.
\end{split}
\end{equation*}
This completes the proof of Theorem \ref{CT1}.
\end{proof}

\par
Now we give the proofs of Lemma \ref{CL1} and Lemma \ref{CL2}.
\begin{proof}[Proof of Lemma \ref{CL1}]
Multiplying \eqref{C12} by $y$, integrate the result with respect to $x$ over $\mathbb{T}$, we obtain that
\begin{equation*}
\begin{split}
\frac{1}{2}\frac{d}{dt}\|y\|^{2}+\|y_{x}\|^{2}\leq& C(\|a\|_{\infty}\|y_{x}\|\|y\|+\|b\|_{\infty}\|y\|^{2}+\|y\|_{1}\|h\|_{-1})\\
\leq&\frac{1}{4}\|y_{x}\|^{2}+C\|a\|^{2}_{1}\|y\|^{2}+C\|b\|_{1}\|y\|^{2}+\frac{1}{4}\|y_{x}\|^{2}+C\|y\|^{2}+C\|h\|_{-1}^{2}\\
\leq&\frac{1}{2}\|y_{x}\|^{2}+C(\|a\|^{2}_{1}+\|b\|^{2}_{1}+1)\|y\|^{2}+C\|h\|_{-1}^{2}.
\end{split}
\end{equation*}
Applying Gronwall inequality, we can see that
\begin{equation}\label{C13}
\|y\|^{2}_{X_{0}}\leq C\|h\|^{2}_{L^{2}(0,T;H^{-1}(\mathbb{T}))}.
\end{equation}
Multiplying \eqref{C12} by $-y_{xx}$, integrate the result with respect to $x$ over $\mathbb{T}$, we obtain that
\begin{equation*}
\begin{split}
&\frac{1}{2}\frac{d}{dt}\|y_{x}\|^{2}+\|y_{xx}\|^{2}\\
\leq& C(\|a\|_{\infty}\|y_{x}\|\|y_{xx}\|+\|b_{x}\|\|y\|_{\infty}\|y_{x}\|+\|b\|_{\infty}\|y_{x}\|^{2}+\|y_{xx}\|\|h\|)\\
\leq&\frac{1}{4}\|y_{xx}\|^{2}+C\|a\|^{2}_{1}\|y_{x}\|^{2}+C\|b\|_{1}\|y_{x}\|^{2}+C\|b\|_{1}\|y\|\|y_{x}\|+\frac{1}{4}\|y_{xx}\|^{2}+C\|h\|^{2}\\
\leq&\frac{1}{2}\|y_{xx}\|^{2}+C(\|a\|^{2}_{1}+\|b\|^{2}_{1}+1)\|y_{x}\|^{2}+C\|y\|^{2}+C\|h\|^{2}.
\end{split}
\end{equation*}
Applying \eqref{C13} and Gronwall inequality, we can see that
\begin{equation}\label{C14}
\|y\|^{2}_{X_{1}}\leq C\|h\|^{2}_{L^{2}(0,T;L^{2}(\mathbb{T}))}.
\end{equation}
Multiplying \eqref{C12} by $y_{xxxx}$, integrate the result with respect to $x$ over $\mathbb{T}$, we obtain that
\begin{equation*}
\begin{split}
&\frac{1}{2}\frac{d}{dt}\|y_{xx}\|^{2}+\|y_{xxx}\|^{2}\\
\leq& C(\|a_{x}\|\|y_{x}\|_{\infty}\|y_{xxx}\|+\|a\|_{\infty}\|y_{xx}\|^{2}+\|b_{x}\|\|y\|_{\infty}\|y_{xxx}\|+\|b\|_{\infty}\|y_{x}\|\|y_{xxx}\|+\|y_{xxx}\|\|h_{x}\|)\\
\leq&\frac{1}{4}\|y_{xxx}\|^{2}+C\|a\|^{2}_{1}\|y\|_{2}^{2}+C\|a\|_{1}\|y_{xx}\|^{2}+\frac{1}{4}\|y_{xxx}\|^{2}+C\|b\|_{1}^{2}\|y\|_{1}^{2}+\frac{1}{4}\|y_{xxx}\|^{2}+C\|h\|_{1}^{2}\\
\leq&\frac{3}{4}\|y_{xxx}\|^{2}+C(\|a\|^{2}_{1}+1)\|y_{xx}\|^{2}+C(\|a\|^{2}_{1}+\|b\|^{2}_{1})\|y\|_{1}^{2}+C\|h\|_{1}^{2}.
\end{split}
\end{equation*}
Applying \eqref{C14} and Gronwall inequality, we can see that
\begin{equation*}
\|y\|^{2}_{X_{2}}\leq C\|h\|^{2}_{L^{2}(0,T;H^{1}(\mathbb{T}))}.
\end{equation*}
Thus the conclusion of lemma \ref{CL1} follows.
\end{proof}

\begin{proof}[Proof of Lemma \ref{C2}]
Noting that $w=e^{-s \varphi}q$, simple calculations give that
\begin{equation*}
e^{-s\varphi}Lq=e^{-s\varphi}L(e^{s\varphi}w):=I_{1}+I_{2}+I_{3},
\end{equation*}
where
\begin{equation*}
\begin{split}
&I_{1}=w_{t}+w_{xxx}+3s^{2}\varphi_{x}^{2}w_{x}+3s^{2}\varphi_{x}\varphi_{xx}w,\\
&I_{2}=3s\varphi_{x}w_{xx}+3s\varphi_{xx}w_{x}+s^{3}\varphi_{x}^{3}w,\\
&I_{3}=s(\varphi_{t}+\varphi_{xxx})w.
\end{split}
\end{equation*}
It is not difficult to deduce that
\begin{equation}\label{C3}
2\int_{D_{T}}I_{1}I_{2}dxdt\leq \|I_{1}+I_{2}\|^{2}_{L^{2}(D_{T})}=\|e^{-s\varphi}Lq-I_{3}\|^{2}_{L^{2}(D_{T})}\leq 2\|e^{-s\varphi}Lq\|^{2}_{L^{2}(D_{T})}+2\|I_{3}\|^{2}_{L^{2}(D_{T})}.
\end{equation}
To calculate each term of $2\int_{D_{T}}I_{1}I_{2}dxdt$, let $I_{ij}~(i-1,2,~j=1,2,3,4)$ denote the $j$th term in the expression of $I_{i}$. Then, performing integrations by parts with respect to $x,t$, we can obtain that
\begin{align*}
&2\int_{D_{T}}I_{11}I_{21}dxdt=6s\int_{D_{T}}\varphi_{x}w_{t}w_{xx}dxdt=-6s\int_{D_{T}}\varphi_{xx}w_{t}w_{x}dxdt+3s\int_{D_{T}}\varphi_{xt}w_{x}^{2}dxdt,\\
&2\int_{D_{T}}I_{11}I_{22}dxdt=6s\int_{D_{T}}\varphi_{xx}w_{t}w_{x}dxdt,\\
&2\int_{D_{T}}I_{11}I_{23}dxdt=2s^{3}\int_{D_{T}}\varphi^{3}_{x}w_{t}wdxdt=-3s^{3}\int_{D_{T}}\varphi_{x}^{2}\varphi_{xt}w^{2}dxdt,\\
&2\int_{D_{T}}I_{12}I_{21}dxdt=6s\int_{D_{T}}\varphi_{x}w_{xxx}w_{xx}dxdt=-3s\int_{D_{T}}\varphi_{xx}w_{xx}^{2}dxdt,\\
&2\int_{D_{T}}I_{12}I_{22}dxdt=6s\int_{D_{T}}\varphi_{xx}w_{xxx}w_{x}dxdt=3s\int_{D_{T}}\varphi_{xxxx}w^{2}_{x}dxdt-6s\int_{D_{T}}\varphi_{xx}w_{xx}^{2}dxdt,\\
&2\int_{D_{T}}I_{12}I_{23}dxdt=2s^{3}\int_{D_{T}}\varphi_{x}^{3}w_{xxx}wdxdt=-3s^{3}\int_{D_{T}}(\varphi_{x}^{2}\varphi_{xx})_{xx}w^{2}dxdt+9s^{3}\int_{D_{T}}\varphi_{x}^{2}\varphi_{xx}w_{x}^{2}dxdt,\\
&2\int_{D_{T}}I_{13}I_{21}dxdt=18s^{3}\int_{D_{T}}\varphi_{x}^{3}w_{x}w_{xx}dxdt=-27s^{3}\int_{D_{T}}\varphi_{x}^{2}\varphi_{xx}w_{x}^{2}dxdt,\\
&2\int_{D_{T}}I_{13}I_{22}dxdt=18s^{3}\int_{D_{T}}\varphi_{x}^{2}\varphi_{xx}w_{x}^{2}dxdt,\\
&2\int_{D_{T}}I_{13}I_{23}dxdt=6s^{5}\int_{D_{T}}\varphi_{x}^{5}w_{x}wdxdt=-15s^{5}\int_{D_{T}}\varphi_{x}^{4}\varphi_{xx}w^{2}dxdt,\\
&2\int_{D_{T}}I_{14}I_{21}dxdt=18s^{3}\int_{D_{T}}\varphi_{x}^{2}\varphi_{xx}ww_{xx}dxdt=9s^{3}\int_{D_{T}}(\varphi_{x}^{2}\varphi_{xx})_{xx}w^{2}dxdt
                           -18s^{3}\int_{D_{T}}\varphi_{x}^{2}\varphi_{xx}w_{x}^{2}dxdt,\\
&2\int_{D_{T}}I_{14}I_{22}dxdt=18s^{3}\int_{D_{T}}\varphi_{x}\varphi_{xx}^{2}ww_{x}dxdt=-9s^{3}\int_{D_{T}}(\varphi_{x}\varphi_{xx}^{2})_{x}w^{2}dxdt,\\
&2\int_{D_{T}}I_{14}I_{23}dxdt=6s^{5}\int_{D_{T}}\varphi_{x}^{4}\varphi_{xx}w^{2}dxdt.
\end{align*}
Gathering together all the computations, we have
\begin{equation}\label{C4}
2\int_{D_{T}}I_{1}I_{2}dxdt=\int_{D_{T}}(-9s\varphi_{xx}w_{xx}^{2}-18s^{3}\varphi_{x}^{2}\varphi_{xx}w_{x}^{2}-9s^{5}\varphi_{x}^{4}\varphi_{xx}w^{2})dxdt+R,
\end{equation}
where
\begin{equation*}
\begin{split}
|R|=&\Big|\int_{D_{T}}\Big[3s\left(\varphi_{xt}+\varphi_{xxxx}\right)w_{x}^{2}
+s^{3}\left(-3\varphi_{x}^{2}\varphi_{xt}+6(\varphi_{x}^{2}\varphi_{xx})_{xx}-9(\varphi_{x}\varphi_{xx}^{2})_{x}\right)w^{2}\Big]dxdt\Big|\\
\leq& C\int_{D_{T}}(s^{2}\varphi^{2}w_{x}^{2}+s^{4}\varphi^{4}w^{2})dxdt.
\end{split}
\end{equation*}
According to \eqref{C2}, we can obtain that
\begin{equation}\label{C5}
\begin{split}
&\int_{D_{T}}(-9s\varphi_{xx}w_{xx}^{2}-18s^{3}\varphi_{x}^{2}\varphi_{xx}w_{x}^{2}-9s^{5}\varphi_{x}^{4}\varphi_{xx}w^{2})dxdt\\
\geq&C\int_{D_{T}}(s\varphi w_{xx}^{2}+s^{3}\varphi^{3}w_{x}^{2}+s^{5}\varphi^{5}w^{2})dxdt-C\int_{D_{\omega}}(s\varphi w_{xx}^{2}+s^{3}\varphi^{3}w_{x}^{2}+s^{5}\varphi^{5}w^{2})dxdt.
\end{split}
\end{equation}
Combining \eqref{C3}-\eqref{C5}, we have
\begin{equation*}
\begin{split}
&\int_{D_{T}}(s\varphi w_{xx}^{2}+s^{3}\varphi^{3}w_{x}^{2}+s^{5}\varphi^{5}w^{2})dxdt\\
\leq&C\|e^{-s\varphi}Lq\|^{2}_{L^{2}(D_{T})}+C\|I_{3}\|^{2}_{L^{2}(D_{T})}+C\int_{D_{T}}(s^{2}\varphi^{2}w_{x}^{2}+s^{4}\varphi^{4}w^{2})dxdt\\
&+C\int_{D_{\omega}}(s\varphi w_{xx}^{2}+s^{3}\varphi^{3}w_{x}^{2}+s^{5}\varphi^{5}w^{2})dxdt\\
\leq&C\int_{D_{T}}|Lq|^{2}e^{-2s\varphi}dxdt+C\int_{D_{T}}(s^{2}\varphi^{2}w_{x}^{2}+s^{4}\varphi^{4}w^{2})dxdt\\
&+C\int_{D_{\omega}}(s\varphi w_{xx}^{2}+s^{3}\varphi^{3}w_{x}^{2}+s^{5}\varphi^{5}w^{2})dxdt.
\end{split}
\end{equation*}
Choosing $s_{1}$ large enough, for any $s\geq s_{1}$, the second term in the right hand side can absorbed by the left hand side, this means that
\begin{equation*}
\begin{split}
&\int_{D_{T}}(s\varphi w_{xx}^{2}+s^{3}\varphi^{3}w_{x}^{2}+s^{5}\varphi^{5}w^{2})dxdt\\
\leq&C\int_{D_{T}}|Lq|^{2}e^{-2s\varphi}dxdt+C\int_{D_{\omega}}(s\varphi w_{xx}^{2}+s^{3}\varphi^{3}w_{x}^{2}+s^{5}\varphi^{5}w^{2})dxdt.
\end{split}
\end{equation*}
\par
This completes the proof of Lemma \ref{CL2}.
\end{proof}

\subsection{Observability inequality for KdVB equation}
\par
Now, we build observability inequality for \eqref{C1} with $g\equiv0$.
\begin{proposition}\label{CP1}
Let $a,b\in X_{0}$ and $g\equiv0$, there exists constant $C=C(a,b,T,\omega)>0$ such that for any $v_{T}\in L^{2}(\mathbb{T})$, the corresponding solution to \eqref{C1} satisfies:
\begin{equation*}
\|v(0)\|^{2}\leq C\int_{D_{\omega}}v^{2}dxdt.
\end{equation*}
\end{proposition}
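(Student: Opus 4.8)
The plan is to derive the observability inequality from the Carleman estimate of Theorem \ref{CT1} in three moves: freeze the large parameter, remove the singular weights on a compact time window, and then propagate the interior energy back to $t=0$ by a backward energy estimate.

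First I would apply Theorem \ref{CT1} with $g\equiv0$, so that the source term on the right disappears and, after fixing $s=s_0$, we are left with
\[
\int_{D_T}(s_0\xi v_{xx}^2+s_0^3\xi^3v_x^2+s_0^5\xi^5 v^2)e^{-4s_0\hat\varphi}\,dx\,dt\le Cs_0^5\int_{D_\omega}\xi^5 e^{-6s_0\check\varphi+2s_0\hat\varphi}v^2\,dx\,dt.
\]
On the right-hand side, since $2\hat\varphi<3\check\varphi$ we have $-6\check\varphi+2\hat\varphi<-3\check\varphi<0$, and combined with the bound $\xi^k e^{-\alpha\xi}\le C$ this shows that the weight $\xi^5 e^{-6s_0\check\varphi+2s_0\hat\varphi}$ stays bounded on $(0,T)$; hence the right-hand side is $\le C\int_{D_\omega}v^2\,dx\,dt$. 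On the left-hand side, restricting the time integration to the compact interval $[T/4,3T/4]$, where $\xi$ and $\hat\varphi$ are bounded above and below, the weight $s_0^5\xi^5 e^{-4s_0\hat\varphi}$ is bounded below by a positive constant; discarding the two other nonnegative terms yields
\[
\int_{T/4}^{3T/4}\|v(t)\|^2\,dt\le C\int_{D_\omega}v^2\,dx\,dt.
\]

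It then remains to control $\|v(0)\|^2$ by this interior energy. Here I would reverse time, setting $\tilde v(\tau,x)=v(T-\tau,x)$, $\tilde a(\tau,x)=a(T-\tau,x)$, $\tilde b(\tau,x)=b(T-\tau,x)$, so that $\tilde v$ solves the forward problem $\tilde v_\tau-\tilde v_{xxx}-\tilde v_{xx}+\tilde a\tilde v_x+\tilde b\tilde v=0$ with $\tilde v(0)=v_T$. Multiplying by $\tilde v$ and integrating over $\mathbb{T}$, the third-order term vanishes by periodicity while $-\tilde v_{xx}$ contributes $+\|\tilde v_x\|^2$, giving
\[
\tfrac12\tfrac{d}{d\tau}\|\tilde v\|^2+\|\tilde v_x\|^2+\int_{\mathbb{T}}\tilde a\tilde v_x\tilde v\,dx+\int_{\mathbb{T}}\tilde b\tilde v^2\,dx=0.
\]
Using the one-dimensional embedding $H^1(\mathbb{T})\hookrightarrow L^\infty(\mathbb{T})$ together with $a,b\in X_0\subset L^2(0,T;H^1(\mathbb{T}))$, the functions $\|\tilde a(\tau)\|_\infty^2$ and $\|\tilde b(\tau)\|_\infty$ are integrable in $\tau$; absorbing $\int\tilde a\tilde v_x\tilde v$ into $\|\tilde v_x\|^2$ produces $\frac{d}{d\tau}\|\tilde v\|^2\le\gamma(\tau)\|\tilde v\|^2$ with $\gamma\in L^1(0,T)$. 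Gronwall's inequality then gives $\|\tilde v(T)\|^2\le e^{C_0}\|\tilde v(\tau)\|^2$ for every $\tau\in[0,T]$, i.e. $\|v(0)\|^2\le e^{C_0}\|v(t)\|^2$ for all $t\in[0,T]$, where $C_0=\int_0^T\gamma$. Integrating this bound over $t\in[T/4,3T/4]$ gives $\frac{T}{2}\|v(0)\|^2\le e^{C_0}\int_{T/4}^{3T/4}\|v(t)\|^2\,dt$, and combining with the previous display yields the claim with $C=C(a,b,T,\omega)$.

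Since the genuinely delicate analysis (the Carleman estimate itself) is already behind us, I do not expect a serious obstacle in this deduction. The only points requiring care are the bookkeeping that the observation weight $\xi^5 e^{-6s_0\check\varphi+2s_0\hat\varphi}$ is bounded on $D_\omega$ — precisely where the geometric condition $2\hat\varphi<3\check\varphi$ built into the weight function \eqref{C2} is used — and checking that the coefficients $a,b\in X_0$ are regular enough to run the Gronwall argument, for which the embedding $H^1\hookrightarrow L^\infty$ valid in one space dimension is essential.
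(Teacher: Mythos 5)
Your proposal is correct and takes essentially the same route as the paper: the paper also combines the Carleman estimate of Theorem \ref{CT1} with $g\equiv 0$ (restricted to an interior time window, $(T/3,2T/3)$ there versus your $[T/4,3T/4]$, where the weight $\xi^{5}e^{-4s\hat\varphi}$ is bounded below and the observation weight is bounded above thanks to $2\hat\varphi<3\check\varphi$) with the backward energy estimate $\|v(0)\|^{2}\leq C\|v(\tau)\|^{2}$, obtained there ``by a similar proof as in Lemma \ref{CL1}'' and integrated over the window. The only cosmetic difference is that you carry out that energy step explicitly via time reversal and Gronwall rather than citing the dissipative estimate of Lemma \ref{CL1}.
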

\begin{proof}[Proof of Proposition \ref{CP1}]
By a similar proof as in Lemma \ref{CL1}, we obtain that
\begin{equation}\label{C15}
\|v(t)\|^{2}\leq C\|v_{T}\|^{2}~~\forall~t\in[0,T].
\end{equation}
Replacing $v(t)$ by $v(0)$ and $v_{T}$ by $v(\tau)$ for $T/3<\tau<2T/3$ in \eqref{C15}, and integrating over $\tau\in(T/3,2T/3)$, we arrive at
\begin{equation}\label{C16}
\|v(0)\|^{2}\leq C\int_{\frac{T}{3}}^{\frac{2T}{3}}\|v(\tau)\|^{2}d\tau.
\end{equation}
\par
It follows from Theorem \ref{CT1} that
\begin{equation*}
\int_{D_{T}}\xi^{5}v^{2}e^{-4s\hat{\varphi}}dxdt\leq \int_{D_{\omega}}\xi^{5}e^{-6s\check{\varphi}+2s\hat{\varphi}}v^{2}dxdt.
\end{equation*}
This implies
\begin{equation*}
\int_{\frac{T}{3}}^{\frac{2T}{3}}\|v(\tau)\|^{2}d\tau\leq C\int_{D_{\omega}}v^{2}dxdt.
\end{equation*}
Combining \eqref{C16},the conclusion of Proposition \ref{CP1} follows.
\par
This completes the proof.
\end{proof}

\subsection{Truncated observability inequality for KdVB equation}
\begin{theorem}\label{pro4}
For any $\rho>0$ and any integer $N\geq1,$ there is an integer $M=M(N)\geq1$ such that if $a,b\in B_{X_{0}}(\rho),$ then any solution $v\in X_{T}$ of \eqref{C1} corresponding to $v_{T}\in H_{N}$ satisfies
\begin{equation}\label{10}
\begin{split}
\|v(0)\|\leq C(\delta,\rho,N,T)\|\Pi_{M}(\chi v)\|_{L^{2}(D_{T})}.
\end{split}
\end{equation}
\end{theorem}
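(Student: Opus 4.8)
The plan is to argue by contradiction and compactness, reducing \eqref{10} to the unique continuation property for the linear KdVB equation, which is in turn supplied by the Carleman estimate of Theorem \ref{CT1}. Throughout I take $g\equiv0$, as in Proposition \ref{CP1}. Fix $\rho>0$ and $N\ge1$, and suppose the assertion fails: then for every $m\ge1$ there are coefficients $a_m,b_m\in B_{X_0}(\rho)$ and terminal data $v_T^m\in H_N$ whose solution $v^m$ of \eqref{C1} violates \eqref{10} with $M=m$ and constant $m$. Normalising so that $\|v_T^m\|=1$, this reads $\|v^m(0)\|>m\,\|\Pi_m(\chi v^m)\|_{L^2(D_T)}$.

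First I would record uniform bounds. Writing $\tilde v^m(x,t)=v^m(2\pi-x,T-t)$ turns \eqref{C1} into a forward problem of the type \eqref{C12}, so Lemma \ref{CL1} gives $\|v^m\|_{X_2}\le C(\rho)\|v_T^m\|_2$; since $v_T^m\in H_N$ and all norms are equivalent on the finite-dimensional space $H_N$, this yields $\|v^m\|_{X_2}\le C(\rho,N)$ and in particular $\|v^m(0)\|\le C(\rho,N)$. Consequently $\|\Pi_m(\chi v^m)\|_{L^2(D_T)}<C(\rho,N)/m\to0$. Using the $X_2$ bound together with the bound on $\partial_t v^m$ in $L^2(0,T;H^{-1})$ coming from the equation, I extract (along a subsequence) $v_T^m\to v_T^\ast$ in $H_N$ with $\|v_T^\ast\|=1$, weak limits $a_m\rightharpoonup a^\ast$, $b_m\rightharpoonup b^\ast$ in $L^2(0,T;H^1)$ with $a^\ast,b^\ast\in B_{X_0}(\rho)$, and by Aubin--Lions $v^m\to v^\ast$ strongly in $L^2(0,T;H^1)$ and in $C([0,T];L^2)$. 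The weak/strong products $a_mv^m_x$ and $b_mv^m$ pass to the limit, so $v^\ast$ solves the linear KdVB equation \eqref{C1} with coefficients $a^\ast,b^\ast$ and terminal value $v^\ast(T)=v_T^\ast$.

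The decisive step uses the vanishing of the observation. Since $\Pi_m\to I$ strongly on $L^2(Q)$ and $\chi v^m\to\chi v^\ast$ in $L^2(Q)$, we have $\Pi_m(\chi v^m)\to \chi v^\ast$; combined with $\|\Pi_m(\chi v^m)\|\to0$ this forces $\chi v^\ast=0$, i.e. $v^\ast\equiv0$ on the nonempty open set $\{\chi\ne0\}$, which contains some space-time box $\omega'\times(t_1,t_2)$. Applying Theorem \ref{CT1} (with $g\equiv0$) to $v^\ast$ on the time interval $(t_1,t_2)$ with observation interval $\omega'$, the right-hand side vanishes, whence $v^\ast\equiv0$ on $\mathbb{T}\times(t_1,t_2)$. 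Since the time-reversed problem is of forward parabolic type, forward uniqueness and backward uniqueness for the linear KdVB equation propagate this to $v^\ast\equiv0$ on $[0,T]$, so $v_T^\ast=v^\ast(T)=0$, contradicting $\|v_T^\ast\|=1$. This contradiction yields the desired $M=M(N)$ and a constant $C=C(\delta,\rho,N,T)$.

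The main obstacle is exactly this last step: turning the localised vanishing $\chi v^\ast=0$ into the global vanishing $v^\ast\equiv0$, which is where the Carleman estimate (and backward uniqueness) is indispensable. The role of the finite-dimensional constraint $v_T\in H_N$ is to provide, through the equivalence of norms on $H_N$ and Lemma \ref{CL1}, the compactness that makes the limit $v^\ast$ a genuine solution with unit terminal norm; the role of the spectral projection is harmless in the limit because $\Pi_m\to I$. Quantitatively the same mechanism can be seen directly: since $\{\varphi_i\}$ are eigenfunctions of $-\partial_{xx}-\partial_{tt}+1$ with eigenvalues $\alpha_i\to\infty$, one has $\|(I-\Pi_M)(\chi v)\|_{L^2(Q)}^2\le \alpha_{M+1}^{-1}\|\chi v\|_{H^1(Q)}^2$, so the truncation error is controlled by the $X_2$-regularity of $v$ guaranteed by Lemma \ref{CL1}; the contradiction argument is the clean way to convert this observation into the stated lower bound without tracking constants.
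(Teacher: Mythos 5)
Your proposal is correct (at the same level of rigor as the paper's own argument) and rests on the same pillars --- an a priori $X_{2}$ bound from Lemma \ref{CL1} plus norm equivalence on $H_{N}$, compactness, Carleman-based unique continuation, and backward uniqueness --- but it is organized differently. The paper factors the statement into three steps: (i) a compactness/contradiction argument proving the auxiliary inequality $\|\chi v\|_{H^{1}(D_{T})}\leq C\|\chi v\|_{L^{2}(D_{T})}$ for solutions with $v_{T}\in H_{N}$; (ii) a direct spectral step, $\|(I-\Pi_{M})(\chi v)\|^{2}_{L^{2}}\leq \alpha_{M}^{-1}\|\chi v\|^{2}_{H^{1}}\leq C\alpha_{M}^{-1}\|\chi v\|^{2}_{L^{2}}$, so that $M$ is chosen explicitly from the condition $C\alpha_{M}^{-1}\leq\frac{1}{2}$; and (iii) a second application of the observability inequality of Proposition \ref{CP1} to bound $\|v(0)\|$ by $\|\chi v\|_{L^{2}(D_{T})}$. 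You instead run a single contradiction directly on \eqref{10} with $M=m\to\infty$, and you dispose of the projection by the operator-theoretic remark that $\Pi_{m}(\chi v^{m})\to\chi v^{\ast}$ whenever $\Pi_{m}\to I$ strongly and $\chi v^{m}\to\chi v^{\ast}$ strongly (valid since $\|\Pi_{m}\|\leq 1$); this is the genuinely new twist relative to the printed proof. A pleasant by-product of the merge is that the paper's Step (iii) becomes unnecessary: under the negation, $\|v^{m}(0)\|$ only needs an \emph{upper} bound, and the contradiction comes from the terminal normalization $\|v_{T}^{\ast}\|=1$, not from a lower bound on $\|v^{m}(0)\|$. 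What the paper's factorization buys in exchange is a more transparent account of where $M$ comes from (it is determined by the Step (i) constant together with the eigenvalue growth $\alpha_{M}\to\infty$ --- precisely the mechanism you record in your closing remark), plus two intermediate inequalities of independent use. Your argument inherits exactly the same tacit ingredients as the paper's: the re-application of the Carleman/observability estimate on a sub-cylinder $\omega'\times(t_{1},t_{2})$, backward uniqueness for the linear equation, and the assumption that the weak limits $a^{\ast},b^{\ast}$ remain admissible coefficients; none of these is an additional gap relative to the paper. One minor point: in the last step you only need to propagate the vanishing from $(t_{1},t_{2})$ up to $t=T$, which is backward uniqueness in the time-reversed (dissipative) variables; the ``forward uniqueness'' half you invoke is automatic from well-posedness and is not needed for the contradiction.
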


\begin{proof}[Proof of Theorem \ref{pro4}]
We divide the proof into several steps.
\par
\textbf{Step 1.} We claim that if $v\in X_{T}$ is a solution of \eqref{C1} with $v_{T}\in H_{N},$ then
\begin{equation}\label{11}
\begin{split}
\|\chi v\|_{H^{1}(D_{T})}\leq C(\delta,\rho,N,T)\|\chi v\|_{L^{2}(D_{T})}.
\end{split}
\end{equation}
\par
Indeed, if we suppose that \eqref{11} is false, then there are functions $a_{n},b_{n}\in B_{X_{0}}(\rho)$ and solutions $v_{n}\in X_{0}$
of \eqref{C1} with $a=a_{n},b=b_{n}$ such that
\begin{equation}\label{16}
\begin{split}
&v_{n}(T)\in H_{N},~\|v_{n}(T)\|=1,\\
&\|\chi v_{n}\|_{H^{1}(D_{T})}\geq n\|\chi v_{n}\|_{L^{2}(D_{T})}.
\end{split}
\end{equation}
According to the first line of \eqref{16}, we have $\|v_{n}(T)\|_{2}\leq C$ for all $n\geq1,$ by standard estimates for \eqref{C1},
it holds that
\begin{equation*}
\begin{split}
\|v_{n}\|_{X_{2}}+\|\partial_{t}v_{n}\|_{L^{2}(0,T;L^{2}(\mathbb{T}))}\leq C,~~n\geq1.
\end{split}
\end{equation*}
Passing to a subsequence, we can assume that
\begin{equation*}
\begin{split}
&v_{n}(T)\rightarrow \hat{v}(T)~~{\rm{in}}~~H_{N},
\\&\partial_{t}v_{n}\rightarrow \partial_{t}\hat{v}~~{\rm{weakly~~ in}}~~L^{2}(0,T;L^{2}(\mathbb{T})),
\\&v_{n}\rightarrow \hat{v}~~{\rm{strongly~~in}}~~L^{2}(0,T;H^{s}(\mathbb{T})),
\\&a_{n}\rightarrow \hat{a}~~{\rm{weakly~~in}}~~L^{2}(0,T;H^{1}(\mathbb{T})),
\\&b_{n}\rightarrow \hat{b}~~{\rm{weakly~~in}}~~L^{2}(0,T;H^{1}(\mathbb{T})),
\end{split}
\end{equation*}
where $s\in (0,3)$, by passing to the limit, we have $\hat{v}\in X_{T}$ is the solution of \eqref{C1} with $a=\hat{a},b=\hat{b},v_{T}=\hat{v}(T)$ and $\|\hat{v}(T)\|=1.$ Moreover, we have
$\chi \hat{v}\equiv0$ in $D_{T}.$ Let an interval $(t_{1},t_{2})\subset (0,T)$ and an interval $\omega \subset I$ be such that $\chi(t,x)\geq \alpha>0$ for $(t,x)\in (t_{1},t_{2})\times \omega.$
By applying Proposition \ref{CP1} to $\hat{v},$ we have
\begin{equation*}
\begin{split}
\|\hat{v}(t_{1})\|\leq C\|\hat{v}\|_{L^{2}((t_{1},t_{2})\times\omega)}
\leq C\alpha^{-1}\|\chi\hat{v}\|_{L^{2}((t_{1},t_{2})\times\omega)}
\leq C\alpha^{-1}\|\chi\hat{v}\|_{L^{2}(D_{T})}=0,
\end{split}
\end{equation*}
due to the backward uniqueness for \eqref{C1}, it holds that $\hat{v}(t)=0$ for $t_{1}\leq t\leq T,$ this contradicts the fact that $\|\hat{v}(T)\|=1.$
\par
\textbf{Step 2.} We prove that
\begin{equation}\label{5}
\begin{split}
\|\chi v\|_{L^{2}(D_{T})}\leq C\|\Pi_{M}(\chi v)\|_{L^{2}(D_{T})}.
\end{split}
\end{equation}
\par
Indeed,
\begin{equation*}
\begin{split}
\|\chi v\|^{2}_{L^{2}(D_{T})}
&\leq \|\Pi_{M}(\chi v)\|^{2}_{L^{2}(D_{T})}+\|(I-\Pi_{M})(\chi v)\|^{2}_{L^{2}(D_{T})}\\
&\leq \|\Pi_{M}(\chi v)\|^{2}_{L^{2}(D_{T})}+\alpha_{M}^{-1}\|(I-\Pi_{M})(\chi v)\|^{2}_{H^{1}(D_{T})}\\
&\leq \|\Pi_{M}(\chi v)\|^{2}_{L^{2}(D_{T})}+C\alpha_{M}^{-1}\|\chi v\|^{2}_{H^{1}(D_{T})}\\
&\leq \|\Pi_{M}(\chi v)\|^{2}_{L^{2}(D_{T})}+C\alpha_{M}^{-1}\|\chi v\|^{2}_{L^{2}(D_{T})},
\end{split}
\end{equation*}
where we have used \eqref{11} in the last inequality. By taking $M$ large enough such that $C\alpha_{M}^{-1}\leq \frac{1}{2},$ this implies that \eqref{5} holds.

\par
\textbf{Step 3.} We prove \eqref{10}.
\par
Since $\omega_{\chi}=\{(x,t)\in D_{T}~|~|\chi(x,t)|>\rho\}$ is nonempty for a
sufficiently small $\rho> 0,$ there exists a open set $\omega_{0}\subset I$ such that $\omega_{0}\times(\alpha,\beta)\subset \omega_{\chi},$ it follows from Proposition \ref{CP1} that
\begin{equation*}
\begin{split}
\|v(0)\|^{2}
\leq C\|v(\alpha)\|^{2}
\leq C\int_{\omega_{0}\times(\alpha,\beta)}v^{2}dxdt
\leq C\int_{\omega_{\chi}}v^{2}dxdt
\leq C(\chi,\rho)\int_{0}^{T}\|\chi v\|^{2}dt.
\end{split}
\end{equation*}
Combining this inequality with \eqref{5}, we can obtain \eqref{10}.
\par
This completes the proof of Theorem \ref{pro4}.
\end{proof}

\section{Proof of Theorem \ref{MT}}
\subsection{Control method}
For random dynamical system in discrete time, we have the following result.
Let $H$ be a separable Hilbert space, and $E$ be a separable Banach space. We consider a random dynamical system of the form
\begin{equation}\label{SC1}
\begin{split}
u_{k}=S(u_{k-1},\eta_{k}),~~k\geq1,
\end{split}
\end{equation}
where $S:H\times E\rightarrow H$ is a continuous mapping and $\eta_{k}$ is a sequence of
i.i.d. random variables in $E.$ Let $\mathcal{K}\subset E$ be the support of the law $\ell:=\mathcal{D}(\eta_{k})$.
Let us consider random dynamical system \eqref{SC1} in a \textbf{compact} metric space $(X,d)$
and $S:X\times E\rightarrow X$ is a continuous mapping. Equation \eqref{SC1} is supplemented with the initial
condition
\begin{equation}\label{SC5}
\begin{split}
u_{0}=u,
\end{split}
\end{equation}
where $u$ is an $X-$valued random variable independent of $\eta_{k}.$ We denote by $(u_{k},\mathbb{P}_{u})$
the discrete-time Markov process associated with \eqref{SC1} and by $P_{k}(u,\Gamma)$ its transition function.
The Markov operators corresponding to $P_{k}(u,\Gamma)$ are denoted by
\begin{equation*}
\begin{split}
\mathcal{B}_{k}:C(X)\rightarrow C(X),~~~\mathcal{B}_{k}^{*}:\mathcal{P}(X)\rightarrow\mathcal{P}(X),k\geq0.
\end{split}
\end{equation*}
\par
The following conditions are assumed to be satisfied for the mapping $S$ and the measure $\ell$.
\par
\textit{\textbf{(R)~Regularity}.} Suppose that $S:H\times E\rightarrow H$ is a $C^{1}$-smooth mapping such that $S(X\times \mathcal{K})\subset X$.

\par
\textit{\textbf{(ACP)~Approximate controllability to a given point}.} Let $\hat{u}\in X$ be a point and let $\mathcal{K}\subset X$ be a
compact subset. System \eqref{SC1} is said to be globally approximately controllable to $\hat{u}$ by
a $\mathcal{K}$-valued control if for any $\varepsilon>0$ there exists $m\geq1$ such that, given any initial point
$u\in X,$ we can find $\zeta^{u}_{1},\cdots,\zeta^{u}_{m}\in\mathcal{K}$ for which
\begin{equation}\label{SC2}
\begin{split}
d(S_{m}(u;\zeta^{u}_{1},\cdots,\zeta^{u}_{m}),\hat{u})\leq\varepsilon,
\end{split}
\end{equation}
where $S_{k}(u;\eta_{1},\cdots,\eta_{k})$ denotes the trajectory of \eqref{SC1}, \eqref{SC5}.
\par
\textit{\textbf{\textbf{(LS)~Local stabilisability}}.} Let us set $\mathbf{B}_{\delta}=\{(u,u^{\prime})\in X\times X~:~d(u,u^{\prime})\leq \delta\}.$ We say that \eqref{SC1}
is locally stabilisable if for any $R>0$ and any compact set $\mathcal{K}\subset E,$ there is a finite dimensional
subspace $\mathcal{E}\subset E,$ positive numbers $C,\delta,\alpha\leq1,$ and $q < 1,$ and a continuous
mapping
$$\Phi:\mathbf{B}_{\delta}\times B_{E}(R)\rightarrow\mathcal{E},~(u,u^{\prime},\eta)\rightarrow \eta^{\prime},$$
which is continuously differentiable in $\eta$ and satisfies the following inequalities for any
$(u,u^{\prime})\in\mathbf{B}_{\delta}:$
\begin{equation}\label{SC3}
\begin{split}
&\sup\limits_{\eta\in B_{E}(R)}(\|\Phi(u,u^{\prime},\eta)\|_{E}+\|D_{\eta}\Phi(u,u^{\prime},\eta)\|_{\mathcal{L}(E)})\leq Cd(u,u^{\prime})^{\alpha},\\
&\sup\limits_{\eta\in \mathcal{K}}d(S(u,\eta),S(u^{\prime},\eta+\Phi(u,u^{\prime},\eta)))\leq qd(u,u^{\prime}).
\end{split}
\end{equation}
\par
\textit{\textbf{(D) Decomposable of the noise}.} For the random variables $\eta_{k},$ we assume that their law $\ell$ is \textbf{decomposable} in the following sense. There are two
sequences $\{F_{n}\}$ and $\{G_{n}\}$ of closed subspaces in $E$ possessing the two properties below:
\par
(a) $\dim F_{n}<+\infty$ and $F_{n}\subset F_{n+1}$ for any $n\geq1,$ and the vector space $\bigcup_{n}F_{n}$ is dense in $E.$
\par
(b) $E$ is the direct sum of $F_{n}$ and $G_{n}$, the norms of the corresponding projections $P_{n}$
and $Q_{n}$ are bounded uniformly in $n\geq1,$ and the measure $\ell$ can be written as the
product of its projections $P_{n\ast}\ell$ and $Q_{n\ast}\ell$ for any $n\geq1.$ Moreover, $P_{n\ast}\ell$ possess $C^{1}$-smooth densities
$\rho_{n}$ with respect to the Lebesgue measure on $F_n.$
\par
~~
\par
We assume that the phase space $X$ is a compact subset of a Banach space $H,$ endowed with a norm $\|\cdot\|.$
\begin{proposition}\label{proSC}\cite[Theorem 1.1]{S2}
Assume that Hypotheses (R), (ACP), (LS), and (D) are satisfied, then \eqref{SC1} has a unique stationary measure $\mu\in \mathcal{P}(X),$ and $\mu$ is exponentially mixing in the dual-Lipschitz metric, namely, there are positive numbers $\gamma$ and $C$ such that
\begin{equation*}
\begin{split}
\|\mathcal{B}_{k}^{*}\lambda-\mu\|_{L}^{*}\leq Ce^{-\gamma k}~{\rm{for}}~k\geq0,\lambda\in \mathcal{P}(X).
\end{split}
\end{equation*}

\end{proposition}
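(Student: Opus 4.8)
The plan is to prove the statement by the \emph{coupling method}: I would construct a coupling $(u_k,u_k')$ of two copies of the chain \eqref{SC1} started from arbitrary points and show that the expected distance $\mathbb{E}\,d(u_k,u_k')$ decays exponentially, uniformly in the initial data. Since the dual-Lipschitz metric on a compact space is controlled by the $L^{1}$-Wasserstein (Kantorovich) distance, such a bound immediately yields $\|P_{k}(u,\cdot)-P_{k}(u',\cdot)\|_{L}^{*}\leq Ce^{-\gamma k}$. The completeness of $(\mathcal{P}(X),\|\cdot\|_{L}^{*})$ then gives existence and uniqueness of the stationary measure $\mu$ together with the asserted exponential mixing $\|\mathcal{B}_{k}^{*}\lambda-\mu\|_{L}^{*}\leq Ce^{-\gamma k}$.

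First I would build the one-step coupling. Fix $\delta$ as in (LS). When the current pair $(u,u')$ lies outside $\mathbf{B}_{\delta}$, I would drive the two chains by the same noise $\eta_{k}$, claiming nothing but correctness of the marginals. When $(u,u')\in\mathbf{B}_{\delta}$, I would use the stabilising control, setting $\eta_{k}'=\eta_{k}+\Phi(u,u',\eta_{k})$ for the second chain. By the first line of \eqref{SC3} the control is small, $\|\Phi\|_{E}\leq Cd(u,u')^{\alpha}$, and by (LS) it takes values in a fixed finite-dimensional subspace $\mathcal{E}$; choosing $n$ with $\mathcal{E}\subset F_{n}$ and using the $C^{1}$-density $\rho_{n}$ of $P_{n\ast}\ell$ from (D), the pushforward of $\ell$ under $\eta\mapsto\eta+\Phi(u,u',\eta)$ is absolutely continuous with respect to $\ell$ with total-variation distance $\leq Cd(u,u')^{\alpha}$. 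A maximal-coupling lemma then produces a coupling of $\eta_{k}$ and $\eta_{k}'$ with $\mathbb{P}(\eta_{k}=\eta_{k}')\geq 1-Cd(u,u')^{\alpha}$, and on this event the second line of \eqref{SC3} gives the contraction $d(u_{k},u_{k}')\leq q\,d(u_{k-1},u_{k-1}')$ with $q<1$.

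Next I would establish recurrence to $\mathbf{B}_{\delta}$ from approximate controllability (ACP). Given any pair, one applies (ACP) to steer each marginal to within $\varepsilon\leq\delta/2$ of the common target $\hat{u}$ using $m$ controls $\zeta_{i}\in\mathcal{K}$; since $\mathcal{K}$ is the support of $\ell$ and $S$ is continuous by (R), the event that the genuine noise lands close enough to the prescribed controls that both trajectories reach a $\delta$-neighbourhood of $\hat{u}$, hence enter $\mathbf{B}_{\delta}$, has probability bounded below by a uniform $p_{0}>0$; here the compactness of $X$ is essential to obtain $m$ and $p_{0}$ independent of the initial pair. Combining this with the one-step estimate, once the pair is in $\mathbf{B}_{\delta}$ the contraction persists forever with positive probability: along a successful run the distance decays like $q^{j}$, so the failure probabilities $Cd^{\alpha}\asymp Cq^{\alpha j}$ are summable and an infinite successful run occurs with probability $\geq 1/2$ once $\delta$ is small. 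A standard renewal argument, alternating recurrence excursions of geometrically bounded length with attempts at an infinite contracting run, then shows that the coupling time $\tau$, after which $d(u_{k},u_{k}')\leq Cq^{k-\tau}$, satisfies $\mathbb{P}(\tau>k)\leq Ce^{-ck}$, whence $\mathbb{E}\,d(u_{k},u_{k}')\leq Ce^{-\gamma k}$.

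I expect the main obstacle to be the analysis near the diagonal in the second step: one must simultaneously (i) match the finite-dimensional range of the control $\Phi$ with the decomposition $E=F_{n}\oplus G_{n}$ of (D) and quantify, via the regularity of $\rho_{n}$, how the pushforward measure deviates from $\ell$ in total variation, and (ii) organise the iteration so that the per-step failure probabilities, which scale like $d(u,u')^{\alpha}$, remain summable along a contracting run. Making these two ingredients fit together, so that an infinite run of successful couplings has probability bounded away from $0$ uniformly over $\mathbf{B}_{\delta}$, is the crux; the recurrence estimate and the final passage to the dual-Lipschitz metric are comparatively routine.
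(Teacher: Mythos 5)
This proposition is not proved in the paper at all: it is imported verbatim from Shirikyan \cite[Theorem 1.1]{S2}, and your outline (near-diagonal coupling by shifting the noise with $\Phi$ from (LS), a measure-transformation estimate using the decomposability (D) followed by a maximal coupling, recurrence to $\mathbf{B}_{\delta}$ from (ACP) plus compactness of $X$, and a renewal argument giving an exponential tail for the coupling time, hence exponential mixing in the dual-Lipschitz metric) is essentially the coupling argument by which that cited theorem is actually proved. The two points you single out as the crux --- compatibility of the range $\mathcal{E}$ of $\Phi$ with the spaces $F_{n}$, and the total-variation bound for the pushforward of $\ell$ under $\eta\mapsto\eta+\Phi(u,u',\eta)$ --- are precisely the content of the measure-theoretic lemmas in \cite{S2}, so your plan follows the same route as the proof the paper relies on.
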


\begin{remark}
Proposition \ref{proSC} has been used to establish exponential mixing for 2D Navier-Stokes equations driven by a space-time localised noise and boundary noise in \cite{S1} and \cite{S2}, respectively.
\end{remark}

\subsection{Squeezing property}
\par
We first introduce a minimisation problem:
\par
\textbf{(P)} Given a constant $\delta>0,$ an integer $N\geq1,$ and functions $u_{0}\in H$ and
$\hat{u}\in X_{0}$ satisfying \eqref{1} with $\eta=0$, minimise the functional
$$J(w,\zeta)=\frac{1}{2}\int_{0}^{T}\|\zeta(t)\|^{2}dt+\frac{1}{\delta}\|P_{N}u(T)\|^{2}$$
over the set of functions $(w,\zeta)\in X_{0}\times L^{2}(D_{T})$ satisfying the system
\begin{eqnarray}
\begin{array}{l}
\left\{
\begin{array}{lll}
w_{t}+Aw+\hat{u}_{x}w+\hat{u}w_{x}=\chi(\Pi_{m}\zeta)\\
w(x,0)=u_{0}
\end{array}
\right.
\end{array}
\begin{array}{lll}\textrm{in}~D_{T},
\\\textrm{in}~\mathbb{T}.
\end{array}
\end{eqnarray}

\begin{lemma}\label{pro5} The following results hold.
\par
(1) Problem \textbf{(P)} has a unique optimal solution $(w,\zeta),$ if we denote by $\Psi(\hat{u})(v_0):=\zeta,$ then, the mapping $\Psi$
is a mapping from $X_{0}$ to $\mathcal{L}(H;L^{2}(D_{T}))$, and it is infinitely differentiable and uniformly Lipschitz continuous on balls.
\par
(2) The optimal solution $(w,\zeta)$ satisfies the inequality
\begin{equation}\label{27}
\begin{split}
\frac{1}{\delta}\|P_{N}w(T)\|^{2}+\|\zeta\|^{2}_{L^{2}(D_{T})}\leq C\|v_{0}\|^{2},
\end{split}
\end{equation}
where $C>0$ is a constant not depending on $N$ and $\delta.$
\end{lemma}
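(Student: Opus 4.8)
The problem \textbf{(P)} is a linear-quadratic optimal control problem, so the natural strategy is to treat it with the standard machinery of convex optimization in Hilbert space. The state equation
$$w_{t}+Aw+\hat{u}_{x}w+\hat{u}w_{x}=\chi(\Pi_{m}\zeta),\qquad w(x,0)=u_{0},$$
is a linear KdVB-type equation in $w$, with the control $\zeta\in L^{2}(D_{T})$ entering affinely through $\chi(\Pi_{m}\zeta)$. The plan is to first establish existence and uniqueness of the minimizer, then extract the optimality system (adjoint equation plus first-order condition), and finally read off both the regularity of the control-to-state map $\Psi$ and the a priori bound \eqref{27}.

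\textbf{Step 1: Well-posedness and coercivity.}
First I would verify that for each fixed control $\zeta$ the state equation has a unique solution $w\in X_{0}$ depending continuously (indeed linearly) on $(u_{0},\zeta)$; this follows from Lemma \ref{CL1} applied with $a=\hat{u}$, $b=\hat{u}_{x}$ and right-hand side $h=\chi(\Pi_{m}\zeta)$, noting that $\hat{u}\in X_{0}$ guarantees the required coefficient regularity. The functional $J$ is then a sum of two nonnegative quadratic forms in $(w,\zeta)$; it is strictly convex and coercive in $\zeta$ because of the term $\tfrac{1}{2}\int_{0}^{T}\|\zeta\|^{2}dt$, which dominates. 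Since the admissible set (pairs $(w,\zeta)$ solving the affine state equation) is a closed affine subspace of $X_{0}\times L^{2}(D_{T})$, the direct method of the calculus of variations yields a unique minimizer $(w,\zeta)$.

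\textbf{Step 2: Optimality system and smoothness of $\Psi$.}
Next I would derive the Euler--Lagrange conditions by differentiating $J$ along admissible variations. Introducing the adjoint state $p$ solving the backward linear KdVB equation
$$-p_{t}-p_{xxx}-p_{xx}-\hat{u}p_{x}+\hat{u}_{x}p=0\ \ \text{in}\ D_{T},\qquad p(x,T)=\tfrac{2}{\delta}P_{N}w(T),$$
the first-order condition reads $\zeta=-\Pi_{m}(\chi p)$. Because $w(T)$ depends linearly and boundedly on $(u_{0},\zeta)$ and the whole optimality system is linear, the solution map $v_{0}\mapsto\zeta$ is linear and bounded, so $\Psi(\hat{u})\in\mathcal{L}(H;L^{2}(D_{T}))$. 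The dependence on $\hat{u}$ enters only through the smooth coefficients of the state and adjoint equations; standard implicit-function-theorem/energy-estimate arguments (differentiating the optimality system in $\hat{u}$) then give that $\hat{u}\mapsto\Psi(\hat{u})$ is infinitely differentiable and Lipschitz on bounded sets of $X_{0}$.

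\textbf{Step 3: The a priori bound \eqref{27}.}
This is where I expect the real work, and it is the main obstacle: the bound must be \emph{uniform in both $N$ and $\delta$}. The clean way is a comparison argument against a known admissible control that steers $P_{N}w(T)$ to be small. Concretely, testing optimality against the zero control gives $J(w,\zeta)\le J(\tilde w,0)$ for the free solution $\tilde w$, which by Lemma \ref{CL1} is bounded by $C\|v_{0}\|^{2}$ but does \emph{not} control $\|P_{N}w(T)\|$ independently of $\delta$. The uniform estimate instead comes from the truncated observability inequality of Theorem \ref{pro4}: it provides a control, built from $\Pi_{M}(\chi\,\cdot)$ with cost $\le C\|v_{0}\|^{2}$, that drives the $H_{N}$-component of the terminal state to zero. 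Plugging this competitor control into $J$ and using minimality forces both $\tfrac{1}{\delta}\|P_{N}w(T)\|^{2}$ and $\|\zeta\|_{L^{2}(D_{T})}^{2}$ below $C\|v_{0}\|^{2}$, with $C$ depending only on $\rho,N,T,\omega$ through the observability constant and hence independent of $\delta$. Checking that the observability-based competitor is genuinely admissible for \textbf{(P)} (i.e. has the form $\chi\Pi_{m}\zeta$ with $m\ge M(N)$) and that its cost is $\delta$-independent is the delicate point on which the whole estimate hinges.
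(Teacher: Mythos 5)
Your proposal is correct, and for part (1) it follows essentially the paper's route (the paper simply cites Steps 1--2 of \cite[Section 3.2]{S1} and derives the same optimality system); for part (2), however, you take a genuinely different path. The paper works directly with the optimality system of \textbf{(P)}: the optimal control is $\zeta=\Pi_{m}(\chi\theta)$, where $\theta$ solves the adjoint equation with terminal datum $-\frac{2}{\delta}P_{N}w(T)\in H_{N}$, and the duality identity $\frac{2}{\delta}\|P_{N}w(T)\|^{2}+\|\zeta\|^{2}_{L^{2}(D_{T})}=-(\theta(0),v_{0})$, combined with Theorem \ref{pro4} applied to $\theta$ itself (legitimate precisely because $\theta(T)\in H_{N}$), gives $\|\theta(0)\|\leq C\|\Pi_{m}(\chi\theta)\|=C\|\zeta\|$; the resulting quadratic inequality $\frac{2}{\delta}\|P_{N}w(T)\|^{2}+\|\zeta\|^{2}\leq C\|\zeta\|\,\|v_{0}\|$ yields \eqref{27} at once. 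You instead compare the minimizer against an exact-controllability competitor and invoke minimality. The ``delicate point'' you flag is real but is resolved by the same duality the paper uses, run as an auxiliary HUM problem: minimize $\theta_{T}\mapsto\frac{1}{2}\|\Pi_{m}(\chi\theta)\|^{2}_{L^{2}(D_{T})}+(v_{0},\theta(0))$ over the finite-dimensional space $H_{N}$ (coercive thanks to Theorem \ref{pro4} and backward uniqueness), and take $\zeta^{*}=\Pi_{m}(\chi\hat\theta)$; this is an admissible control for \textbf{(P)}, it steers $P_{N}w^{*}(T)$ to zero, and its cost is controlled by the observability constant, hence is $\delta$-free. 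So both arguments hinge on Theorem \ref{pro4}; the paper's is shorter because the penalized problem's own adjoint state already has $H_{N}$-valued terminal data, while yours pays for one extra minimization but never needs the duality identity for the optimal pair. Two corrections: your adjoint equation is miswritten --- since $\hat{u}_{x}w+\hat{u}w_{x}=(\hat{u}w)_{x}$, its adjoint is $-\hat{u}p_{x}$ with no $\hat{u}_{x}p$ term, and you dropped the zeroth-order part of $A$; the correct equation is $-p_{t}-p_{xxx}-p_{xx}+p-\hat{u}p_{x}=0$, which is exactly \eqref{C1} with $a=-\hat{u}$, $b=1$, and this identification is what licenses invoking Theorem \ref{pro4} in either proof. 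Finally, note that your constant inherits the $N$-dependence of the observability constant --- but so does the paper's, so on this point your proof delivers exactly what the paper's own argument does, despite the lemma's stated $N$-independence.
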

\begin{proof}
By the same arguments as in Step 1 and Step 2 in \cite[Section 3.2]{S1}, we can obtain (1), thus, we omit its proof.
For (2), by the same arguments as in Step 3 in \cite[Section 3.2]{S1}, it holds that
\begin{equation*}
\begin{split}
\frac{2}{\delta}\|P_{N}w(T)\|^{2}+\|\zeta\|^{2}_{L^{2}(D_{T})}=-(\theta(0),v_{0}),
\end{split}
\end{equation*}
where $\zeta=\Pi_{m}(\chi \theta)$ and $\theta$ is a solution to the system
\begin{eqnarray*}
\begin{array}{l}
\left\{
\begin{array}{lll}
-\theta_{t}+A\theta+\hat{u}\theta_{x}=0\\
\theta(x,T)=-\frac{2}{\delta}P_{N}w(T)
\end{array}
\right.
\end{array}
\begin{array}{lll}\textrm{in}~D_{T},
\\\textrm{in}~\mathbb{T}.
\end{array}
\end{eqnarray*}
It is easy to see that
\begin{equation*}
\begin{split}
\frac{2}{\delta}\|P_{N}w(T)\|^{2}+\|\zeta\|^{2}_{L^{2}(D_{T})}&\leq |(\theta(0),v_{0})|
\leq C\|\Pi_{m}(\chi \theta)\|_{L^{2}(D_{T})}\|v_{0}\|=C\|\zeta\|_{L^{2}(D_{T})}\|v_{0}\|,
\end{split}
\end{equation*}
where we have used the truncated observability inequality \eqref{10} in Theorem \ref{pro4}, this implies that
\begin{equation*}
\begin{split}
\|\zeta\|_{L^{2}(D_{T})}\leq C\|v_{0}\|,
\end{split}
\end{equation*}
according to this estimate, we have \eqref{27}.
\par
This completes the proof.
\end{proof}

\par
Now, we can establish the squeezing property.
Let $u$ be the solution of \eqref{1} with $\eta$ in \eqref{39} and $\hat{u}$ be the solution of the system
\begin{eqnarray}\label{26}
 \begin{array}{l}
 \left\{
 \begin{array}{llll}
 \hat{u}_{t}+A\hat{u}+B(\hat{u})=h
 \\\hat{u}(x,0)=\hat{u}_{0}(x)
 \end{array}
 \right.
 \end{array}
 \begin{array}{lll}{\rm{in}}~D_{T},
\\{\rm{in}}~\mathbb{T}.
\end{array}
\end{eqnarray}
Set $v:=u-\hat{u},$ it is easy to see that $v$ satisfies
\begin{eqnarray}\label{4}
\begin{array}{l}
\left\{
\begin{array}{lll}
v_{t}+Av +\hat{u}_{x}v+\hat{u}v_{x}+B(v)=\eta\\
v(x,0)=v_{0}:=u_{0}-\hat{u}_{0}
\end{array}
\right.
\end{array}
\begin{array}{lll}\textrm{in}~D_{T},
\\\textrm{in}~\mathbb{T}.
\end{array}
\end{eqnarray}

\begin{proposition}\label{pro6}
Under the above hypotheses, for any $R > 0$ and $q\in(0,1),$ there is an
integer $m\geq1,$ positive constants $d$ and $C,$ and a continuous mapping
\begin{equation*}
\begin{split}
\Upsilon:B_{H^{2}(D_{T})}(R)\times B_{H}(R)\rightarrow \mathcal{L}(H,\mathcal{E}_{m})\\
(h,\hat{u}_{0})~~~~~~~\longmapsto \Upsilon(h,\hat{u}_{0}),
\end{split}
\end{equation*}
such that the following properties hold.
\par
\textbf{Contraction}: For any functions $h\in B_{H^{2}(D_{T})}(R)$ and $u_{0},\hat{u}_{0}\in B_{H}(R)$ satisfying the inequality $\|u_{0}-\hat{u}_{0}\|\leq d,$ we have
\begin{equation}\label{21}
\begin{split}
\|S(\hat{u}_{0},h)-S(u_{0},h+\Upsilon(h,\hat{u}_{0})(u_{0}-\hat{u}_{0}))\|\leq q\|u_{0}-\hat{u}_{0}\|.
\end{split}
\end{equation}
\par
\textbf{Regularity}: The mapping $\Upsilon$ is infinitely smooth in the Fr\'{e}chet sense.
\par
\textbf{Lipschitz continuity}: The mapping $\Upsilon$ is Lipschitz continuous with the constant $C.$ That is,
$$\|\Upsilon(h_{1},\hat{u}_{1})-\Psi(h_{2},\hat{u}_{2})\|_{\mathcal{L}}\leq C(\|h_{1}-h_{2}\|_{2}+\|\hat{u}_{1}-\hat{u}_{2}\|),$$
where $\|\cdot\|_{\mathcal{L}}$ stands for the norm in the space $\mathcal{L}(H,\mathcal{E}_{m}).$
\end{proposition}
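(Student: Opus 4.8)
The plan is to read Proposition \ref{pro6} as the nonlinear counterpart of the linear-quadratic stabilisation carried out in Lemma \ref{pro5}, and to define the feedback $\Upsilon$ directly from the optimal control obtained there. Given $(h,\hat u_0)\in B_{H^2(D_T)}(R)\times B_H(R)$, let $\hat u$ be the solution of \eqref{26}, put $v_0:=u_0-\hat u_0$, and set $\Upsilon(h,\hat u_0):=\Psi(\hat u)$, the linear operator furnished by Lemma \ref{pro5}(1); since the corresponding optimal control lies in $\mathcal E_m$, this is an element of $\mathcal L(H,\mathcal E_m)$, and it enters \eqref{1} through the localised forcing $\chi\Pi_m\zeta$ matching the structure of $\eta$ in \eqref{4}. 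Writing $V:=u-\hat u$ for the controlled difference, $V$ solves \eqref{4} with $\eta$ replaced by $\chi\Pi_m\zeta$, and the quantity to be bounded in \eqref{21} is exactly $\|V(T)\|$. I would prove the contraction in two stages: first a linear contraction $\|w(T)\|\le q_0\|v_0\|$ for the solution $w$ of the linearised problem (P), and then a transfer to $V$ by controlling the quadratic remainder $V-w$.

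For the linear stage, split $w(T)=P_N w(T)+(I-P_N)w(T)$. The low-frequency part is handled by Lemma \ref{pro5}(2): estimate \eqref{27} gives $\|P_Nw(T)\|^2\le C\delta\|v_0\|^2$ with $C$ independent of $N$ and $\delta$, so this term is $\le\tfrac12 q_0^2\|v_0\|^2$ once $\delta$ is small. For the high-frequency part I would use the dissipativity of $A=\partial_{xxx}-\partial_{xx}+1$: testing the equation for $(I-P_N)w$ against $(I-P_N)w$, the skew-symmetric part $\partial_{xxx}$ drops out and $-\partial_{xx}+1$ produces the gain $\lambda_{N+1}\|(I-P_N)w\|^2$, with $\lambda_{N+1}\to\infty$. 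Absorbing the transport terms $\hat u_xw+\hat uw_x$ and the source by Young's inequality, integrating in time, and using the a priori bounds $\int_0^T\|w\|^2\,dt\le C\|v_0\|^2$ (a plain energy estimate) and $\|\zeta\|_{L^2(D_T)}\le C\|v_0\|$ (again from \eqref{27}), one obtains $\|(I-P_N)w(T)\|^2\le(e^{-\lambda_{N+1}T}+C\lambda_{N+1}^{-1})\|v_0\|^2$ with constants depending only on $R$ and $T$; hence this term too is $\le\tfrac12 q_0^2\|v_0\|^2$ once $N$ is large. The crucial point is the order of the choices: $N$ is fixed first (which fixes $m=M(N)$ through the truncated observability of Theorem \ref{pro4} used inside Lemma \ref{pro5}), and only then $\delta$, which is legitimate precisely because the constant in \eqref{27} is uniform in $N$ and $\delta$.

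The transfer to the nonlinear flow is then routine. The difference $r:=V-w$ solves the linearised equation with zero data and source $-B(V)=-VV_x$; since $B$ is quadratic and $\|V\|_{X_0}\le C\|v_0\|$ for $\|v_0\|$ bounded, a standard energy estimate yields $\|r(T)\|\le C\|v_0\|^2$. Therefore $\|V(T)\|\le\|w(T)\|+\|r(T)\|\le(q_0+Cd)\|v_0\|$ whenever $\|v_0\|\le d$, and choosing $d:=(q-q_0)/C$ gives \eqref{21}. The regularity and Lipschitz assertions follow by composing the infinitely differentiable, locally Lipschitz map $\Psi$ of Lemma \ref{pro5}(1) with the solution map $(h,\hat u_0)\mapsto\hat u$ of \eqref{26}, which is smooth and Lipschitz on balls by well-posedness and smooth dependence on the data for \eqref{26}.

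I expect the main obstacle to be the high-frequency estimate, or more precisely keeping every constant uniform in the parameters $N$, $m$ and $\delta$ so that the cascade of choices ($N$, then $m=M(N)$, then $\delta$, then $d$) is non-circular. The transport terms $\hat u_xw+\hat uw_x$ must be absorbed into the $\lambda_{N+1}$-gain without generating $N$-dependent constants that would spoil the smallness, and the bound $\int_0^T\|w\|^2\,dt\le C\|v_0\|^2$ must be established independently of the localisation parameter $m$; this is exactly where the uniformity of \eqref{27} in Lemma \ref{pro5}(2) is indispensable.
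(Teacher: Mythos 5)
Your proposal is correct and follows the paper's strategy almost step for step: you define $\Upsilon(h,\hat u_0)v_0$ through the optimal control $\Psi(\hat u)$ of Lemma \ref{pro5} entering the equation as $\chi\Pi_m\zeta$, prove a linear contraction by splitting $w(T)$ into $P_N$ and $I-P_N$ parts (low modes controlled via \eqref{27}, high modes via dissipativity), absorb the quadratic remainder using the smallness $\|u_0-\hat u_0\|\le d$, and obtain regularity and Lipschitz continuity by composing $\Psi$ with the resolving operator of \eqref{26} — exactly as in the paper, including the order of choices ($N$, then $m=M(N)$ via Theorem \ref{pro4}, then $\delta$, then $d$) and the reliance on the $N$- and $\delta$-uniformity of the constant in \eqref{27}. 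The one genuine divergence is the high-frequency step. The paper disposes of it in one line: $\|(I-P_N)w(T)\|\le\lambda_{N+1}^{-1/2}\|w(T)\|_1\le C\lambda_{N+1}^{-1/2}\bigl(\|v_0\|+\|\eta\|_{L^2(D_T)}\bigr)$, i.e.\ the spectral gap combined with the $H^1$-smoothing estimate for the linearized flow at the final time. You instead project the equation onto the high modes and run a Gronwall/dissipation argument to reach $e^{-\lambda_{N+1}T}+C\lambda_{N+1}^{-1}$; this also works, but it is the more laborious route, because — as you yourself flag — the transport terms $\hat u_xw+\hat uw_x$ couple high and low modes, and the convolution integral $\int_0^Te^{-\lambda_{N+1}(T-s)}\|\hat u(s)\|_1^2\,ds$ is $O(\lambda_{N+1}^{-1})$ uniformly over $\hat u_0\in B_H(R)$ only after invoking smoothing of $\hat u$ away from $t=0$, which is precisely the regularization fact the paper applies directly to $w(T)$. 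A second, minor difference: the paper's remainder $z$ solves \eqref{20} with source $-B(w)$, so only bounds on the linear solution are needed, whereas your $r=V-w$ has source $-B(V)$ and thus additionally requires the a priori bound $\|V\|_{X_0}\le C\|v_0\|$ for the controlled nonlinear flow; this is harmless since $(B(V),V)=0$ on the torus, so the two formulations are equivalent in difficulty.
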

\begin{proof}[Proof of Proposition \ref{pro6}]
\par
We define an operator $\mathcal{R}^{\hat{u}}$ by $\mathcal{R}^{\hat{u}}(v_{0},\eta):=w,$ where $w$ is the solution of system
\begin{eqnarray}\label{3}
\begin{array}{l}
\left\{
\begin{array}{lll}
w_{t}+Aw +\hat{u}_{x}w+\hat{u}w_{x}=\eta\\
w(x,0)=v_{0}
\end{array}
\right.
\end{array}
\begin{array}{lll}\textrm{in}~D_{T},
\\\textrm{in}~\mathbb{T},
\end{array}
\end{eqnarray}
and by $\mathcal{R}^{\hat{u}}_{t}$ its restriction to the time $t.$
We divide the proof of Proposition \ref{pro6} into several steps.
\par
\textbf{Step 1.} Definition of $\Upsilon.$
\par
Indeed, it follows from Lemma \ref{pro5} that
\begin{equation}\label{17}
\begin{split}
&\|P_{N}\mathcal{R}^{\hat{u}}_{T}(v_{0},\chi \Pi_{m}(\Psi(\hat{u})v_{0}))\|\leq C\delta\|v_{0}\|,\\
&\|\Psi(\hat{u})v_{0}\|_{L^{2}(D_{T})}\leq C\|v_{0}\|.
\end{split}
\end{equation}
We define $$\Upsilon(h,\hat{u}_{0})v_0:=\chi \Pi_{m}(\Psi(\hat{u})v_{0}),$$
thus $\Upsilon(h,\hat{u}_{0}):H\rightarrow \mathcal{E}_{m}$ is a continuous linear operator, namely, $\Upsilon(h,\hat{u}_{0})\in\mathcal{L}(H,\mathcal{E}_{m})$. We will show that it satisfies the properties in Proposition \ref{pro6}.
\par
\textbf{Step 2.} Let $w$ be the solution of \eqref{3} with $\eta=\Upsilon(h,\hat{u}_{0})v_0,$ we claim that
\begin{equation}\label{18}
\begin{split}
\|w(T)\|\leq \frac{q}{2}\|v_{0}\|,~~\|w\|_{X_{T}}\leq C\|v_{0}\|.
\end{split}
\end{equation}
\par
Indeed, due to the first inequality in \eqref{17}, we have $\|P_{N}w(T)\|\leq C\delta\|v_{0}\|,$ this implies that
\begin{equation*}
\begin{split}
\|w(T)\|&\leq \|(1-P_{N})w(T)\|+\|P_{N}w(T)\|\\
&\leq \lambda_{N+1}^{-\frac{1}{2}}\|w(T)\|_{1}+C\delta\|v_{0}\|\\
&\leq C\lambda_{N+1}^{-\frac{1}{2}}(\|v_{0}\|+\|\eta\|_{L^{2}(D_{T})})+C\delta\|v_{0}\|\\
&\leq C(\lambda_{N+1}^{-\frac{1}{2}}+\delta)\|v_{0}\|.
\end{split}
\end{equation*}
Choosing $N$ sufficiently large and $\delta$ sufficiently small, we obtain the first inequality in \eqref{18}.
With the help of the continuity of $\mathcal{R}^{\hat{u}}$ and the boundedness of $\Upsilon$, we can derive the second inequality in \eqref{18}.
\par
\textbf{Step 3.} Let $v$ be the solution of \eqref{4} with $\eta=\Upsilon(h,\hat{u}_{0})(u_{0}-\hat{u}_{0}),$ we claim that
\begin{equation}\label{19}
\begin{split}
\|v(T)\|\leq q\|v_{0}\|.
\end{split}
\end{equation}
\par
Indeed, we rewrite $v$ in the form $v=w+z,$ where $w$ is a solution of \eqref{3} with $\eta=\Upsilon(h,\hat{u}
_{0})(u_{0}-\hat{u}_{0}),$ then $z$ must be a solution of
\begin{eqnarray}\label{20}
\begin{array}{l}
\left\{
\begin{array}{lll}
z_{t}+Az+[(\hat{u}+w)z]_{x}+B(z)=-B(w)\\
z(x,0)=0
\end{array}
\right.
\end{array}
\begin{array}{lll}\textrm{in}~D_{T},
\\\textrm{in}~\mathbb{T}.
\end{array}
\end{eqnarray}
Multiplying the first equation in \eqref{20} by $2z$ and then performing integration by parts over $I,$ we get
\begin{equation*}
\begin{split}
\frac{1}{2}\frac{d}{dt}\|z\|^{2}+\|z(t)\|_{1}^{2}
&=\int_{I}(\hat{u}+w)zz_{x}dx-\int_{I}ww_{x}zdx\\
&\leq C\|\hat{u}+w\|_{1}\|z\|\|z\|_{1}+C\|w\|\|w\|_{1}\|z\|_{1}.
\end{split}
\end{equation*}
With the help of Young's inequality and Poincar\'{e} inequality, we derive
\begin{equation*}
\begin{split}
\frac{d}{dt}\|z\|^{2}+\|z\|_{1}^{2}
\leq C(\|\hat{u}+w\|_{1}^{2}\|z\|^{2}+\|w\|^{2}\|w\|_{1}^{2})
.
\end{split}
\end{equation*}
By using Gronwall's inequality, it holds that
\begin{equation*}
\begin{split}
\|z(t)\|^{2}\leq C\int_{0}^{t}\|w(s)\|^{2}\|w(s)\|_{1}^{2}ds\cdot e^{C\int_{0}^{t}\|\hat{u}(s)+w(s)\|_{1}^{2}ds}
,
\end{split}
\end{equation*}
this implies that
\begin{equation*}
\begin{split}
\|z(T)\|\leq C(R)\|w\|_{X_{0}}^{2}\leq C(R)\|v_{0}\|^{2}=C(R)\|u_{0}-\hat{u}_{0}\|^{2}\leq C(R)d\|u_{0}-\hat{u}_{0}\|
,
\end{split}
\end{equation*}
where we have used the second inequality in \eqref{18} and $\|u_{0}-\hat{u}_{0}\|\leq d.$ By taking $0<d\ll 1,$ we have
\begin{equation*}
\begin{split}
\|z(T)\|\leq \frac{q}{2}\|u_{0}-\hat{u}_{0}\|
.
\end{split}
\end{equation*}
Combining this with the first inequality of \eqref{18}, we prove \eqref{21}.
\par
If we define the resolving operator for the KdVB equation \eqref{26} as $\hat{u}=\hat{u}(h,\hat{u}_{0}),$ it is easy to see that
$$\Upsilon(h,\hat{u}_{0})v_0=\chi \Pi_{m}(\Psi(\hat{u}(h,\hat{u}_{0}))v_{0}),$$ the regularity and Lipschitz continuity of $\Upsilon$ can be proved by combining similar properties of the resolving operator for \eqref{26} and $\Psi$ as in Lemma \ref{pro5}.
\par
This completes the proof of Proposition \ref{pro6}.
\end{proof}

\subsection{Proof of Theorem \ref{MT}}
Multiplying the equation in \eqref{1} by $u$ and then performing integration by parts over $\mathbb{T},$ we get
\begin{equation*}
\begin{split}
\frac{1}{2}\frac{d}{dt}\|u(t)\|^{2}+\|u(t)\|_{1}^{2}=(h+\eta,u).
\end{split}
\end{equation*}
With the help of Poincar\'{e} inequality, interpolation inequality and Young inequality, for sufficiently small $\gamma,$ there exists a positive constant $\nu$ such that
\begin{equation}\label{6}
\begin{split}
\frac{d}{dt}\|u(t)\|^{2}+\nu\|u(t)\|^{2}\leq C\|h+\eta\|^{2}
.
\end{split}
\end{equation}
By applying the Gronwall's inequality to the second inequality in \eqref{6}, we have
\begin{equation*}
\begin{split}
\|u(t)\|^{2}\leq e^{-\nu t}\|u_{0}\|^{2}+\int_{0}^{t}e^{-\nu(t-s)}\|(h+\eta)(s)\|^{2}ds~{\rm{for~any}}~t\geq0,
\end{split}
\end{equation*}
this implies that there exists some constant $0<\kappa<1$ such that
\begin{equation*}
\begin{split}
\|u(T)\|\leq \kappa\|u_{0}\|+C_{1}\|h+\eta_{k}\|_{L^{2}(D_{T})}.
\end{split}
\end{equation*}
Let $r>0$ be so large that $\|h+\eta_{k}\|_{L^{2}(D_{T})}\leq r$ almost surly and $R\geq \frac{C_{1}r}{1-\kappa},$ then
for any $u_{0}\in B_{H}(R),$ we have $u(T)\in B_{H}(R).$
We define
\begin{equation}\label{25}
\begin{split}
X:=S(B_{H}(R),B_{L^{2}(D_{T})}(r)).
\end{split}
\end{equation}
Then, $X\subset H$ is a compact set such that
\begin{equation*}
\begin{split}
&\mathbb{P}\{S_{k}(u_{0},\eta)\in X~{\rm{for~any}}~k\geq0\}=1~{\rm{for~any}}~u_{0}\in X,\\
&\mathbb{P}\{S_{k}(u_{0},\eta)\in X~{\rm{for~any}}~k\geq~{\rm{some}}~k_{0}\}=1~{\rm{for~any}}~u_{0}\in H
.
\end{split}
\end{equation*}
This implies that the random flow generated by \eqref{1} possesses a compact invariant absorbing set $X,$
and $(u_{k},\mathbb{P}_{u})$ has at least one stationary measure $\mu,$ and any such measure is supported by $X.$
Therefore it suffices to prove the uniqueness of an invariant measure and the property of
exponential mixing for the restriction of $(u_{k},\mathbb{P}_{u})$ to $X.$ This will be done with the help of
Proposition \ref{proSC}.
\par
We take $E:=L^{2}(D_{T}),$ $X$ as in \eqref{25}, $\mathcal{K}=supp~\ell$ in Proposition \ref{proSC}. Condition (AC) implies that Hypotheses (ACP) in Proposition \ref{proSC} holds. We define $\Phi(u_{0},
\hat{u}_{0},\eta):=\Upsilon(\eta,\hat{u}_{0})(u_{0}-\hat{u}_{0})$, Proposition \ref{pro6} implies that Hypotheses (LS) in Proposition \ref{proSC} holds. Since the random variables $\eta_{k}$ satisfy Condition (DN), this implies that Hypotheses (D) in Proposition \ref{proSC} holds. By applying Proposition \ref{proSC} to \eqref{1}, we can prove Theorem \ref{MT}.

\subsection{An example for Condition (AC)}
\begin{proposition}
If $h=h(t,x)$ is a given function which is $T$-periodic in time, and $\|h\|_{H^{1}(D_{T})}$ is sufficiently small, there is a $\bar{u}\in H$ such that Condition (AC) holds.
\end{proposition}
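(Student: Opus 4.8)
The plan is to exploit that, by Condition (DN), the support $\mathcal{K}$ contains the origin, so the constant sequence $\zeta_{1}=\cdots=\zeta_{l}=0$ is an admissible control. With this choice $S_{l}(v,0,\dots,0)=u(lT)$, where $u$ solves the \emph{deterministic} equation \eqref{1r} with $\eta\equiv0$, that is $u_{t}+Au+B(u)=h$ started from $v$. Since $h$ is $T$-periodic in time, I will take $\bar u:=\hat u(0)$, where $\hat u$ is a $T$-periodic solution of the deterministic forced KdVB equation; by periodicity $\hat u(kT)=\bar u$ for every $k\ge0$. The proposition then reduces to showing that the deterministic flow, started from any $v\in B_{H}(R)$, converges to $\bar u$ at a uniform exponential rate in $H$.

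First I would construct $\hat u$ and control its size. Multiplying $u_{t}+Au+B(u)=h$ by $u$ and integrating exactly as in \eqref{6} (with $\eta\equiv0$) yields $\|u(T)\|\le\kappa\|u_{0}\|+C_{1}\|h\|_{L^{2}(D_{T})}$ with $\kappa<1$, so the closed ball $\overline{B_{H}(\rho)}$ with $\rho:=C_{1}\|h\|_{L^{2}(D_{T})}/(1-\kappa)$ is invariant under the time-$T$ map $v\mapsto S(v,h)$. By the smoothing estimates as in Lemma \ref{CL1} (applied to the nonlinear equation for small data), $S(\overline{B_{H}(\rho)},h)$ is bounded in $H^{2}(\mathbb{T})$, hence relatively compact in $H$; Schauder's fixed point theorem then provides $\bar u\in\overline{B_{H}(\rho)}$ with $S(\bar u,h)=\bar u$, and its $T$-periodic time extension $\hat u$ is the desired solution. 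A parabolic bootstrap over one period (starting from $\|\hat u(T/2)\|\le\rho$ and smoothing to time $T$) gives $\sup_{t}\|\hat u(t)\|_{H^{2}}\le C(\|h\|_{H^{1}(D_{T})})$, which is as small as we wish once $\|h\|_{H^{1}(D_{T})}$ is small.

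The decisive step is the decay of $w:=u-\hat u$. Since $B(u)-B(\hat u)=\tfrac12\big((u+\hat u)w\big)_{x}$, the difference solves $w_{t}+Aw+\tfrac12\big((u+\hat u)w\big)_{x}=0$ with $w(0)=v-\bar u$. Multiplying by $2w$ and integrating over $\mathbb{T}$, the third-order term drops and $\int_{\mathbb{T}}\big((u+\hat u)w\big)_{x}w\,dx=\tfrac12\int_{\mathbb{T}}(u+\hat u)_{x}w^{2}\,dx$. Writing $(u+\hat u)_{x}=w_{x}+2\hat u_{x}$ and using the crucial cancellation $\int_{\mathbb{T}}w_{x}w^{2}\,dx=\tfrac13\int_{\mathbb{T}}(w^{3})_{x}\,dx=0$, I obtain
\begin{equation*}
\frac{d}{dt}\|w\|^{2}+2\|w_{x}\|^{2}+2\|w\|^{2}+\int_{\mathbb{T}}\hat u_{x}w^{2}\,dx=0 .
\end{equation*}
Since $\big|\int_{\mathbb{T}}\hat u_{x}w^{2}\,dx\big|\le\|\hat u_{x}\|_{\infty}\|w\|^{2}\le C\|\hat u\|_{H^{2}}\|w\|^{2}$ and $\|\hat u\|_{H^{2}}$ is small, this yields $\tfrac{d}{dt}\|w\|^{2}+\|w\|^{2}\le0$, whence $\|w(t)\|\le e^{-t/2}\|w(0)\|$ \emph{uniformly in the size of $v$}. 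The point to emphasise is that the quadratic nonlinearity couples to the (possibly large) initial data only through the self-cancelling term $\int_{\mathbb{T}}w_{x}w^{2}$, so no smallness of $u$ itself is required; only $\hat u$ must be small.

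Finally, combining the two previous steps, for any $v\in B_{H}(R)$ and the zero control,
\begin{equation*}
\|S_{l}(v,0,\dots,0)-\bar u\|=\|w(lT)\|\le e^{-lT/2}\|v-\bar u\|\le e^{-lT/2}\big(R+\|\bar u\|\big),
\end{equation*}
which is $\le\varepsilon$ once $l$ is chosen large enough, depending on $R$ and $\varepsilon$ only. This verifies Condition (AC). The main obstacle is the construction in the second step: producing a $T$-periodic solution of the nonlinear forced KdVB equation together with a \emph{quantitative} smallness bound on $\sup_{t}\|\hat u(t)\|_{H^{2}}$ in terms of $\|h\|_{H^{1}(D_{T})}$; once this is in hand, the cancellation in the third step makes the uniform exponential attraction essentially automatic.
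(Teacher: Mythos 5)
Your proposal is correct and follows essentially the same route as the paper: both construct a small $T$-periodic solution of the deterministic forced KdVB equation by a fixed point argument, deduce its global exponential stability from the smallness of that solution, take $\bar{u}$ to be its value at time $0$, and verify Condition (AC) with the zero controls, which are admissible because $\mathcal{K}$ contains the origin. The only difference is the level of detail: the paper cites \cite{GP2018AA} for the fixed-point construction (obtaining a unique periodic solution, small in $H^{1}$) and merely asserts the exponential stability, whereas you supply these steps explicitly --- Schauder's theorem on an invariant small ball combined with parabolic smoothing for existence, and the energy identity with the cancellation $\int_{\mathbb{T}}w_{x}w^{2}\,dx=0$ showing that the decay rate is uniform in the size of the initial data, with only $\hat{u}$ required to be small.
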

\begin{proof}
By the similar analysis as in \cite{GP2018AA}, with the help of fixed point method, it is easy to show that there is a constant $\delta>0$ such that $\|h\|_{H^{1}(D_{T})}<\delta$, the system
$$u_{t}+Au +B(u)=h$$
has a unique solution $\tilde{u}$ defined throughout the real line and $T-$periodic in time. Moreover, it holds that
\begin{equation*}
\begin{split}
\sup\limits_{t\in \mathbb{R}}\|\tilde{u}(t)\|_{1}\leq C\delta\rightarrow0~~{\rm{}as}~~\delta\rightarrow0,
\end{split}
\end{equation*}
where $C>0$ is a constant independent of $\delta.$ This implies that $\tilde{u}$ is globally exponentially stable as $t\rightarrow+\infty.$ Therefore, by taking $\bar{u}=\tilde{u}(0)$ for any positive
constants $R$ and $\varepsilon$ one can find an integer $l\geq1$ such that \eqref{24} holds with $\zeta_{1}=\zeta_{2}=\cdots=\zeta_{l}=0$. Since $\mathcal{K}$ contains the zero element, we see that Condition (AC) is satisfied.
\end{proof}

\section{Foia\c{s}-Prodi estimate for KdVB equation}
\subsection{Foia\c{s}-Prodi estimate for KdVB equation}
Let us consider the following two systems
\begin{eqnarray}
&&\label{My}du+[Au+B(u)]dt=hdt+g(u)dW,\\
&&\label{Mz}dv+[Av+B(v)-\lambda P_{N}(u-v)]dt=hdt+g(v)dW.
\end{eqnarray}
\par~~
\par
Now, we are in a position to present the Foia\c{s}-Prodi estimate for KdVB equation.
\begin{theorem}(Foia\c{s}-Prodi estimate for KdVB equation)\label{FPE}
Let $h=h(x)$, Conditions (g1)-(g2) hold and $L_{3}<1$. Let $u$ and $v$ be the
solutions to \eqref{My} and \eqref{Mz} with $u_{0},v_{0}\in H$, then there exists a positive integer $N$ large enough and $\lambda=\frac{\lambda_{N}}{2}$
such that for any
\begin{equation}\label{54}
\begin{split}
p\in
\left\{
\begin{array}{lll}
(2,+\infty)\\
(2,1+\frac{1}{L_{3}^{2}})
\end{array}
\begin{array}{lll}
under~(g2)(i)~or~(g2)(ii),\\
under~(g2)(iii),
\end{array}
\right.
\end{split}
\end{equation}
we have
\begin{equation}\label{62}
\begin{split}
\mathbb{E}\|u(t)-v(t)\|^{2}\leq
\left\{
\begin{array}{lll}
e^{C(1+\|u_{0}\|^{2p}+\|v_{0}\|^{2p})}e^{-Ct} \\
e^{C(1+\|u_{0}\|^{2p}+\|v_{0}\|^{2p})}\frac{1}{t^{\frac{p}{4}-\frac{1}{2}}}
\end{array}
\right.
\begin{array}{lll}
under~(g2)(i),\\
under~(g2)(ii)~or~(g2)(iii).
\end{array}
\end{split}
\end{equation}
\end{theorem}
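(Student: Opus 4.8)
The plan is to set $w := u - v$, derive the equation it satisfies, and obtain a differential inequality for $\mathbb{E}\|w\|^2$ (or more precisely for a suitable stopped/weighted version of it) that can be closed by Gronwall-type arguments. Subtracting \eqref{Mz} from \eqref{My}, the difference $w$ solves
\begin{equation*}
dw + [Aw + B(u) - B(v) + \lambda P_N w]\,dt = (g(u) - g(v))\,dW,
\end{equation*}
with $w(0) = u_0 - v_0$. First I would apply the It\^{o} formula to $\|w\|^2$, which produces the drift terms $-2(Aw,w) = -2\|w\|_1^2$ (after the dispersive term $w_{xxx}$ integrates to zero on $\mathbb{T}$), the Foia\c{s}--Prodi damping $-2\lambda\|P_N w\|^2$, the nonlinear term coming from $B(u)-B(v)$, and the It\^{o} correction $\|g(u)-g(v)\|_{HS}^2 \le L_g^2\|w\|^2$ from (g1). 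The martingale part has zero expectation.

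The key structural point is the treatment of the nonlinear term $(B(u)-B(v),w)$. Writing $B(u)-B(v) = u u_x - v v_x$ and using $w = u-v$, I would integrate by parts on $\mathbb{T}$ to rewrite this as a trilinear expression; the dangerous piece is controlled by interpolation (e.g. $\|w\|_{L^\infty}$ or $\|w\|_{L^4}$ bounds via $\|w\|\,\|w\|_1$) at the cost of a factor depending on $\|u\|_1$ or $\|v\|_1$. The Foia\c{s}--Prodi mechanism is then decisive: splitting $\|w\|^2 = \|P_N w\|^2 + \|Q_N w\|^2$ and using the spectral gap $\|w\|_1^2 \ge \lambda_{N+1}\|Q_N w\|^2$, together with the damping $-2\lambda\|P_N w\|^2$ with the choice $\lambda = \lambda_N/2$, one arranges that the low and high frequencies are \emph{both} dissipated. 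After absorbing the nonlinear and It\^{o} terms, the goal is an inequality of the schematic form
\begin{equation*}
\frac{d}{dt}\mathbb{E}\big(\|w\|^2 e^{-\int_0^t \Theta(s)\,ds}\big) \le -c\,\mathbb{E}\big(\|w\|^2 e^{-\int_0^t \Theta(s)\,ds}\big),
\end{equation*}
where $\Theta(s)$ collects the $\|u(s)\|_1^2 + \|v(s)\|_1^2 + L_g^2$ factors that multiply $\|w\|^2$ and which must be controlled in expectation.

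The main obstacle, and where the multiplicative noise genuinely complicates matters, is that $\Theta$ is itself random and the Gronwall factor $e^{\int_0^t \Theta\,ds}$ appears inside the expectation, so one cannot simply move it outside. The plan is to control the exponential moments of $\int_0^t (\|u\|_1^2 + \|v\|_1^2)\,ds$: I would first establish, from the basic energy estimate with the sublinear/linear growth conditions (g2), a priori bounds on $\mathbb{E}\exp(\epsilon \int_0^t \|u\|_1^2\,ds)$ and on moments $\mathbb{E}\|u(t)\|^{2p}$, exploiting $L_3 < 1$ in case (g2)(iii) to guarantee the dissipation dominates the noise-induced growth — this is precisely what forces the restriction $p < 1 + L_3^{-2}$ in \eqref{54}, since the allowed exponential moment is limited by how much room the coefficient $1 - L_3^2$ leaves after the It\^{o} term $p(p-1)L_3^2\|u\|^2\cdots$ is accounted for. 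Then I would apply H\"{o}lder's inequality to split $\mathbb{E}(\|w\|^2 e^{-\int\Theta})$ into a product of a moment of $\|w\|^2$ and an exponential moment of $\int \Theta$, bounding each factor separately. The exponential decay $e^{-Ct}$ under (g2)(i) follows because the bounded-noise case yields a genuine exponential Gronwall constant, whereas under (g2)(ii)/(iii) the exponential moments only hold up to a finite threshold, so the H\"{o}lder balancing degrades the decay to the polynomial rate $t^{-(p/4 - 1/2)}$ recorded in \eqref{62}; optimizing the H\"{o}lder exponents against the available moment bounds is the delicate bookkeeping that produces exactly that power of $t$.
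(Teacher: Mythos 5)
Your skeleton up to the weighted estimate coincides with the paper's own first step: the equation for $w=u-v$, It\^{o}'s formula for $\|w\|^2$, the nonlinear bound $|(B(u)-B(v),w)|=|(uw,w_x)|\le \frac{C_{0}}{2}\|u\|_{1}^{2}\|w\|^{2}+\frac{1}{2}\|w\|_{1}^{2}$, the spectral splitting $\frac{1}{2}\|w\|_{1}^{2}+\lambda\|P_{N}w\|^{2}\ge \frac{\lambda_{N}}{2}\|w\|^{2}$ with $\lambda=\frac{\lambda_{N}}{2}$, and the resulting pathwise weighted inequality $\mathbb{E}\big[e^{\Gamma(t)}\|w(t)\|^{2}\big]\le\|w_{0}\|^{2}$ with $\Gamma$ as in \eqref{50} are exactly Lemma \ref{L1}. (One small refinement in the paper: the weight $\Gamma$ involves only $\|u\|_{1}^{2}$, not $\|v\|_{1}^{2}$, because the trilinear term is estimated through $u$ alone; this matters later, since the tail estimates are proved for solutions of \eqref{My}, not for the nudged solution of \eqref{Mz}.) Your route for case (g2)(i) — exponential integrability of $\int_{0}^{t}\|u\|_{1}^{2}ds-bt$ via the exponential martingale inequality, then a H\"{o}lder/Cauchy--Schwarz split — is viable and is what Proposition \ref{pro13}(1) encodes in tail form.

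The genuine gap is in cases (g2)(ii) and (g2)(iii), which are the heart of the theorem (they are responsible both for the restriction \eqref{54} and for the polynomial rate in \eqref{62}). Your premise that exponential moments of $\int_{0}^{t}\|u\|_{1}^{2}ds$ ``hold up to a finite threshold'' there is false: under sublinear or linear growth of $g$, the quadratic variation of the energy martingale is only bounded by $C\int_{0}^{t}(1+\|u\|^{4})ds$ (respectively $\|u\|^{2+2\varrho}$), which grows strictly faster than the dissipation $\int_{0}^{t}\|u\|_{1}^{2}ds$ that would have to absorb it in any exponential--martingale argument; no bound of the form $\mathbb{E}\exp\big(\epsilon\int_{0}^{t}\|u\|_{1}^{2}ds\big)\le e^{Kt}$ can be derived this way for any $\epsilon>0$, and in general it fails (linear multiplicative noise produces only polynomially decaying tails). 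Hence the H\"{o}lder split of $\mathbb{E}\|w\|^{2}$ against $\mathbb{E}e^{-\Gamma}$ gives $+\infty$ on the right-hand side and the argument cannot close. What survives is only a polynomial tail bound: by Burkholder--Davis--Gundy and the moment bounds of Lemma \ref{L3} (this is precisely where $p<1+1/L_{3}^{2}$ enters), one gets $\mathbb{P}\big(\sup_{t\ge T}\big[C_{0}\int_{0}^{t}\|u\|_{1}^{2}ds-(\frac{\lambda_{N}}{4}-L_{g}^{2})t-\beta\big]\ge R\big)\le C(1+\|u_{0}\|^{2p})(T+R)^{-(\frac{p}{2}-1)}$, i.e.\ Proposition \ref{pro13}(2) and estimate \eqref{7}. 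Exploiting this requires replacing your moment/H\"{o}lder argument by an event decomposition, which is the paper's missing idea: introduce the stopping time $\tau_{R,\beta}$ at which the bracketed quantity first exceeds $R$; on $\{\tau_{R,\beta}=\infty\}$ one has $\Gamma(t)\ge\frac{\lambda_{N}}{4}t-(R+\beta)$, whence $\mathbb{E}\big[\mathbf{1}_{(\tau_{R,\beta}=\infty)}\|w(t)\|^{2}\big]\le e^{R+\beta-\frac{\lambda_{N}}{4}t}\|w_{0}\|^{2}$ (estimate \eqref{2}); on the complement use Cauchy--Schwarz with the \emph{unweighted} fourth moments of $u$ and $v$ (Lemmas \ref{L3} and \ref{L4}) and the tail bound above; finally optimise by choosing $R=\frac{\lambda_{N}}{16}t$, which is exactly how the exponent in \eqref{62} arises, since $\big(R^{-(\frac{p}{2}-1)}\big)^{1/2}\sim t^{-(\frac{p}{4}-\frac{1}{2})}$. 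In short, you guessed the correct answer and the correct source of the constraint on $p$, but the mechanism you propose cannot reach it; the stopping-time/indicator decomposition with only-polynomial tails is the essential ingredient your proposal lacks.
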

Foia\c{s}-Prodi estimate was firstly established in \cite{FPE}, now, it is a powerful tool to
establish the ergodicity for SPDEs, and it is often used to compensate the degeneracy of the noise, see \cite{KS12, M2014} and the references therein.
The Foia\c{s}-Prodi estimate of KdV equation on bounded domain was firstly established in \cite{Glat2021}.
\par
Now, we start to prove Theorem \ref{FPE}.
\subsection{Moment estimates and estimates in probability}
\begin{lemma}\label{L2}
Let $h=h(x)$, (g1)-(g2) hold and $L_{3}<1$.
It holds that
\begin{equation}\label{45}
\begin{split}
\mathbb{E}\|u(t)\|^{2}+\frac{3}{2}\mathbb{E}\int_{0}^{t}\|u(s)\|_{1}^{2}ds\leq \|u_{0}\|^{2}+bt,
\end{split}
\end{equation}
where
\begin{equation*}
\begin{split}
b=
\left\{
\begin{array}{lll}
K_{1}^{2}+C\|h\|_{-1}^{2}\\
2K_{2}^{2}+C(\varrho)+C\|h\|_{-1}^{2}\\
CK_{3}^{2}+C\|h\|_{-1}^{2}
\end{array}
\right.
\begin{array}{lll}
under~(g2)(i),\\
under~(g2)(ii),\\
under~(g2)(iii).
\end{array}
\end{split}
\end{equation*}
\end{lemma}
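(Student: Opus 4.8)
The plan is to apply the Itô formula to the functional $u\mapsto\|u\|^{2}$ along the solution of \eqref{My} and then take expectations, exploiting the conservative structure of the nonlinearity and the dissipativity of $A$. Formally,
\begin{equation*}
d\|u\|^{2}=2(u,-Au-B(u)+h)\,dt+\|g(u)\|_{HS}^{2}\,dt+2(u,g(u)\,dW).
\end{equation*}
To make this rigorous I would first run the computation on a finite-dimensional Galerkin approximation, for which the classical Itô formula applies, using the well-posedness of \eqref{My} recalled before Theorem \ref{MT2}, and then pass to the limit. The stochastic integral $\int_{0}^{t}(u,g(u)\,dW)$ is shown to be a genuine martingale by a localisation (stopping-time) argument together with the Hilbert--Schmidt bounds in (g2), so that its expectation vanishes.

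The two algebraic identities driving the estimate are obtained by integration by parts on $\mathbb{T}$, where the third-order and transport contributions cancel:
\begin{equation*}
(u,Au)=(u,u_{xxx}-u_{xx}+u)=\|u_{x}\|^{2}+\|u\|^{2}=\|u\|_{1}^{2},\qquad (u,B(u))=\int_{\mathbb{T}}u^{2}u_{x}\,dx=\tfrac13\int_{\mathbb{T}}(u^{3})_{x}\,dx=0.
\end{equation*}
Thus the nonlinear term disappears and $A$ supplies the full dissipation $-2\|u\|_{1}^{2}$. The forcing is controlled by the $H^{1}$--$H^{-1}$ duality and Young's inequality, $2|(u,h)|\le 2\|u\|_{1}\|h\|_{-1}\le \delta\|u\|_{1}^{2}+C_{\delta}\|h\|_{-1}^{2}$, which costs only an arbitrarily small fraction $\delta$ of the dissipation.

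It remains to absorb $\|g(u)\|_{HS}^{2}$ according to the three hypotheses. Under (g2)(i) this term is bounded by $K_{1}^{2}$ directly and consumes no dissipation. Under (g2)(ii), $\|g(u)\|_{HS}^{2}\le 2K_{2}^{2}+2L_{2}^{2}\|u\|^{2\varrho}$ with $2\varrho<2$, so the subcritical power is handled by Young's inequality, $2L_{2}^{2}\|u\|^{2\varrho}\le \tfrac14\|u\|^{2}+C(\varrho)$, again using only a small part of the dissipation. Under (g2)(iii), $\|g(u)\|_{HS}^{2}\le (L_{3}^{2}+\epsilon)\|u\|^{2}+C_{\epsilon}K_{3}^{2}$, and here the hypothesis $L_{3}<1$ is essential: since $\|u\|^{2}\le\|u\|_{1}^{2}$ and the available dissipation coefficient is $2$, the strict inequality $L_{3}^{2}<1$ lets one choose $\epsilon,\delta$ small and still keep a strictly positive multiple of $\|u\|_{1}^{2}$, the bookkeeping being arranged so as to retain $\tfrac32\|u\|_{1}^{2}$ on the left. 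Collecting terms and setting $b$ to the resulting constant in each case ($K_{1}^{2}+C\|h\|_{-1}^{2}$, $2K_{2}^{2}+C(\varrho)+C\|h\|_{-1}^{2}$, and $CK_{3}^{2}+C\|h\|_{-1}^{2}$ respectively) gives
\begin{equation*}
d\|u\|^{2}+\tfrac32\|u\|_{1}^{2}\,dt\le b\,dt+2(u,g(u)\,dW).
\end{equation*}
Integrating on $[0,t]$ and taking expectations kills the martingale and yields \eqref{45}.

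The main obstacle is the competition, under (g2)(iii), between the linear-growth noise and the dissipation: the coefficient $L_{3}^{2}$ of $\|u\|^{2}$ must be beaten by the dissipation strength, which is exactly where $L_{3}<1$ enters and cannot be dispensed with. A secondary technical point is the rigorous justification of the Itô expansion and of the vanishing of the martingale expectation, which I would handle through the Galerkin scheme and a stopping-time localisation as indicated above.
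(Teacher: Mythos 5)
Your route is the same as the paper's: It\^{o}'s formula for $\|u\|^{2}$, the torus identities $(u,Au)=\|u_{x}\|^{2}+\|u\|^{2}=\|u\|_{1}^{2}$ and $(u,B(u))=0$, the duality--Young bound $2|(u,h)|\le\delta\|u\|_{1}^{2}+C_{\delta}\|h\|_{-1}^{2}$, a three-case estimate of $\|g(u)\|_{HS}^{2}$, and taking expectations to kill the martingale (your Galerkin/stopping-time remarks only make rigorous what the paper leaves implicit). Cases (g2)(i) and (g2)(ii) close correctly and exactly as in the paper.

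The step that fails is case (g2)(iii). The It\^{o} identity supplies dissipation $2\|u\|_{1}^{2}$; you spend $\delta\|u\|_{1}^{2}$ on the forcing and $(L_{3}^{2}+\epsilon)\|u\|^{2}\le(L_{3}^{2}+\epsilon)\|u\|_{1}^{2}$ on the noise, so what survives is $(2-\delta-\epsilon-L_{3}^{2})\|u\|_{1}^{2}$. Demanding that this be at least $\tfrac{3}{2}\|u\|_{1}^{2}$ forces $L_{3}^{2}<\tfrac{1}{2}$ even in the limit $\delta,\epsilon\to0$; for $L_{3}^{2}\in[\tfrac12,1)$ the bookkeeping cannot be ``arranged so as to retain $\tfrac32\|u\|_{1}^{2}$'' as you claim. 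Nor is this a removable technicality: take $g(u)w:=L_{3}(e,w)_{U}\,u$ (rank one, satisfying (g1) and (g2)(iii) for every $K_{3}>0$), $h=0$, and constant initial data. The solution stays spatially constant, $u(t,\cdot)\equiv c(t)$ with $dc=-c\,dt+L_{3}c\,d\beta$, so $\mathbb{E}\|u(t)\|^{2}=\|u_{0}\|^{2}e^{-(2-L_{3}^{2})t}$ and $\|u\|_{1}=\|u\|$; then the left-hand side of \eqref{45} exceeds $\|u_{0}\|^{2}$ by the fixed positive amount $\|u_{0}\|^{2}\bigl(\tfrac{3/2}{2-L_{3}^{2}}-1\bigr)\bigl(1-e^{-(2-L_{3}^{2})t}\bigr)$ whenever $L_{3}^{2}>\tfrac12$, while the right-hand side exceeds $\|u_{0}\|^{2}$ only by $CK_{3}^{2}t$, which tends to $0$ as $K_{3}\downarrow0$. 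So with the constant $\tfrac32$ the inequality is actually false for $L_{3}\in(1/\sqrt{2},1)$. In fairness, the paper's own proof commits the identical error---its absorption needs $(1+\varepsilon)L_{3}^{2}+\tfrac18\le\tfrac12$, i.e.\ $L_{3}^{2}<\tfrac38$---so your write-up is a faithful reconstruction of the published argument; but as a proof of the lemma for all $L_{3}<1$, the case-(iii) step fails in both, and what the argument genuinely yields under (g2)(iii) is \eqref{45} with $\tfrac32$ replaced by any constant strictly below $2-L_{3}^{2}$ (equivalently, the stated form under the extra restriction $L_{3}^{2}<\tfrac38$).
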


\begin{proof}By applying It\^{o} formula to $\|u(t)\|^{2}$, we have
\begin{equation*}
\begin{split}
\frac{1}{2}d\|u(t)\|^{2}+\|u(t)\|_{1}^{2}dt=[\frac{1}{2}\|g(u)\|_{HS}^{2}+(u,h)]dt+(u,g(u)dW).
\end{split}
\end{equation*}
It follows from Condition (g2) and Young inequality that for any $\varepsilon>0$
\begin{equation*}
\begin{split}
\|g(u)\|_{HS}^{2}\leq
\left\{
\begin{array}{lll}
K_{1}^{2}\\
2K_{2}^{2}+C(\varepsilon,\varrho)+\varepsilon L_{2}^{2}\|u\|_{1}^{2}\\
(1+\frac{1}{\varepsilon})K_{3}^{2}+(1+\varepsilon)L_{3}^{2}\|u\|_{1}^{2}
\end{array}
\right.
\begin{array}{lll}
under~(g2)(i),\\
under~(g2)(ii),\\
under~(g2)(iii).
\end{array}
\end{split}
\end{equation*}
Since $2|(u,h)|\leq \frac{1}{8}\|u\|_{1}^{2}+C\|h\|_{-1}^{2},$ this implies that
\begin{equation}\label{44}
\begin{split}
\|u(t)\|^{2}+\frac{3}{2}\int_{0}^{t}\|u(s)\|_{1}^{2}ds\leq \|u_{0}\|^{2}+bt+M(t),
\end{split}
\end{equation}
where $M(t)=2\int_{0}^{t}(u,g(u)dW).$ Therefore, by taking the
expected values on both sides of \eqref{44}, we get \eqref{45}.
\par
This completes the proof.
\end{proof}

\begin{lemma}\label{L3}
Let $h=h(x)$, (g1)-(g2) hold and $L_{3}<1$. For any
\begin{equation}\label{55}
\begin{split}
p
\left\{
\begin{array}{lll}
\in [2,+\infty)\\
\in [2,1+\frac{1}{L_{3}^{2}})
\end{array}
\right.
\begin{array}{lll}
under~(g2)(i)~or~(g2)(ii),\\
under~(g2)(iii),
\end{array}
\end{split}
\end{equation}
there exist some constants $\gamma_{p}>0,C_{p}>0$ such that
\begin{equation}\label{46}
\begin{split}
\mathbb{E}\|u(t)\|^{p}\leq \|u_{0}\|^{p}e^{-\gamma_{p}t}+C_{p}, ~\forall t\geq0.
\end{split}
\end{equation}
\end{lemma}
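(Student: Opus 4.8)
The plan is to apply the infinite-dimensional It\^{o} formula to the functional $\Phi(u)=\|u\|^{p}$ and then close a differential inequality for $\mathbb{E}\|u\|^{p}$ by Gronwall. Writing \eqref{My} as $du=(-Au-B(u)+h)\,dt+g(u)\,dW$ and using $\Phi'(u)=p\|u\|^{p-2}u$ together with $\Phi''(u)=p\|u\|^{p-2}I+p(p-2)\|u\|^{p-4}\,u\otimes u$, I would obtain
\begin{equation*}
\begin{split}
d\|u\|^{p}=&\,p\|u\|^{p-2}\big[-\|u\|_{1}^{2}+(u,h)\big]\,dt+\tfrac{p}{2}\|u\|^{p-2}\|g(u)\|_{HS}^{2}\,dt\\
&+\tfrac{p(p-2)}{2}\|u\|^{p-4}\|g(u)^{*}u\|_{U}^{2}\,dt+dM,
\end{split}
\end{equation*}
where $dM=p\|u\|^{p-2}(u,g(u)\,dW)$ and I have used the two algebraic identities $(B(u),u)=\tfrac13\int_{\mathbb{T}}(u^{3})_{x}\,dx=0$ and $(Au,u)=\|u\|_{1}^{2}$ on the torus (the first being exactly the cancellation that already made the nonlinearity disappear in Lemma \ref{L2}). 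Since $p\geq2$ the It\^{o} correction is nonnegative and is controlled by $\|g(u)^{*}u\|_{U}^{2}\leq\|g(u)\|_{HS}^{2}\|u\|^{2}$, which collapses the two $\|g\|_{HS}^{2}$ contributions into a single term of coefficient $\tfrac{p(p-1)}{2}$.

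Next I would insert the growth hypotheses on $g$ and simultaneously exploit the residual dissipation $-p\|u\|^{p-2}\|u\|_{1}^{2}\leq-p\|u\|^{p}$ coming from $\|u\|_{1}^{2}\geq\|u\|^{2}$. Under (g2)(i) the noise term is at most $\tfrac{p(p-1)}{2}K_{1}^{2}\|u\|^{p-2}$, and under (g2)(ii) it is bounded by $C\|u\|^{p-2}+C\|u\|^{p-2+2\varrho}$ with $2\varrho<2$; in both cases Young's inequality converts these lower-order powers into $\varepsilon\|u\|^{p}+C_{\varepsilon}$, so no restriction on $p$ is needed. The delicate case is (g2)(iii): here $\|g(u)\|_{HS}^{2}\leq(1+\varepsilon)L_{3}^{2}\|u\|_{1}^{2}+C_{\varepsilon}$ produces a term $\tfrac{p(p-1)}{2}(1+\varepsilon)L_{3}^{2}\|u\|^{p-2}\|u\|_{1}^{2}$ that competes with the dissipation on the \emph{same} top-order scale, so it can be absorbed only while the coefficient of $\|u\|^{p-2}\|u\|_{1}^{2}$ stays strictly negative; this is precisely the quantitative constraint that forces $L_{3}<1$ and confines the exponent to the admissible range \eqref{55}.

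I would then treat the forcing by $p\|u\|^{p-2}|(u,h)|\leq\varepsilon\|u\|^{p-2}\|u\|_{1}^{2}+C_{\varepsilon}\|u\|^{p-2}\|h\|_{-1}^{2}$, absorbing the first piece into the dissipation and, by a further Young inequality on $\|u\|^{p-2}$, the second into $\varepsilon\|u\|^{p}+C_{\varepsilon}$. Collecting everything gives a pointwise bound $d\|u\|^{p}\leq(-\gamma_{p}\|u\|^{p}+C_{p})\,dt+dM$ for suitable $\gamma_{p},C_{p}>0$, and after taking expectations Gronwall's inequality yields $\mathbb{E}\|u(t)\|^{p}\leq\|u_{0}\|^{p}e^{-\gamma_{p}t}+\tfrac{C_{p}}{\gamma_{p}}(1-e^{-\gamma_{p}t})$, which is \eqref{46}.

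The main obstacle is twofold. First, the martingale $M$ is not a priori a genuine martingale, since its integrand involves $\|u\|^{p-1}\|g(u)\|_{HS}$, whose integrability is only known once the estimate is in hand; the expectation step must therefore be justified by localization, running the computation up to stopping times $\tau_{n}=\inf\{t:\|u(t)\|\geq n\}$ where $M(\cdot\wedge\tau_{n})$ is a true mean-zero martingale, applying Gronwall to $\mathbb{E}\|u(t\wedge\tau_{n})\|^{p}$, and passing $n\to\infty$ by Fatou's lemma using the a priori finiteness from Lemma \ref{L2}. Second, the balance in case (g2)(iii) is genuinely sharp: the product $\tfrac{p(p-1)}{2}L_{3}^{2}$ set against the dissipation is what both requires $L_{3}<1$ and pins down \eqref{55}, and outside this range the method produces no sign-definite dissipation. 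A minor point is the non-smoothness of $\|u\|^{p}$ at the origin for $p$ not an even integer, handled in the standard way by applying It\^{o}'s formula to $(\varepsilon+\|u\|^{2})^{p/2}$ and letting $\varepsilon\downarrow0$.
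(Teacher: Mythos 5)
Your proposal is correct and follows essentially the same route as the paper: It\^{o}'s formula applied to $\|u\|^{p}$, the same three-case bound on $\frac{p(p-1)}{2}\|u\|^{p-2}\|g(u)\|_{HS}^{2}$ via (g2), absorption into the dissipation $p\|u\|^{p-2}\|u\|_{1}^{2}$ by Poincar\'{e}, and Gronwall after taking expectations, with your stopping-time localization and the $(\varepsilon+\|u\|^{2})^{p/2}$ regularization supplying standard rigor that the paper leaves implicit. One small inaccuracy: since you absorb the (g2)(iii) noise term at the $H^{1}$ level against the full dissipation, your method actually requires only $(p-1)L_{3}^{2}<2$, i.e.\ $p<1+\frac{2}{L_{3}^{2}}$, a strictly larger range than \eqref{55} (the paper gets the narrower constraint because it retains only half the dissipation after treating the forcing and then applies Poincar\'{e}), so your constraint is not ``precisely'' \eqref{55} as claimed --- but this is harmless, as your argument covers every $p$ in \eqref{55}.
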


\begin{proof}
By applying It\^{o} formula to $\|u(t)\|^{p}$, we have
\begin{equation*}
\begin{split}
d\|u(t)\|^{p}+&p\|u(t)\|^{p-2}\|u(t)\|_{1}^{2}dt=\\
&\frac{p(p-1)}{2}\|u(t)\|^{p-2}\|g(u)\|_{HS}^{2}dt+p\|u(t)\|^{p-2}(u,h)dt+p\|u(t)\|^{p-2}(u,g(u)dW).
\end{split}
\end{equation*}
Since $2|(u,h)|\leq \frac{1}{8}\|u\|_{1}^{2}+C\|h\|_{-1}^{2},$ this implies that for any $\varepsilon,\eta>0$ there exists a constant $C_{\varepsilon,\eta}$ such that
\begin{equation*}
\begin{split}
2p\|u(t)\|^{p-2}(u,h)\leq p\eta\|u(t)\|^{p-2}\|u(t)\|_{1}^{2}+\frac{\varepsilon}{2}\|u(t)\|^{p}+C_{\varepsilon,\eta}\|h\|_{-1}^{p}.
\end{split}
\end{equation*}
It follows from Condition (g2) that for any $\varepsilon>0$
\begin{equation*}
\begin{split}
\frac{p(p-1)}{2}\|u(t)\|^{p-2}\|g(u)\|_{HS}^{2}\leq
\left\{
\begin{array}{lll}
C+\frac{\varepsilon}{2}\|u(t)\|^{p}\\
C+\frac{p(p-1)}{2}(1+\varepsilon)L_{3}^{2}\|u(t)\|^{p}
\end{array}
\begin{array}{lll}
under~(g2)(i)~or~(g2)(ii),\\
under~(g2)(iii).
\end{array}
\right.
\end{split}
\end{equation*}
\par
If Condition (g2)(i) or (g2)(ii) hold, by taking $\eta$ small enough, the above estimates imply that
\begin{equation*}
\begin{split}
d\|u(t)\|^{p}+\frac{p}{2}\|u(t)\|^{p-2}\|u(t)\|_{1}^{2}dt\leq
C+\varepsilon\|u(t)\|^{p}+C_{\varepsilon}\|h\|_{-1}^{p}+p\|u(t)\|^{p-2}(u,g(u)dW).
\end{split}
\end{equation*}
If $\varepsilon$ is sufficiently small and by Poincar\'{e} inequality,
we have
\begin{equation}\label{47}
\begin{split}
d\|u(t)\|^{p}+\gamma_{p}\|u(t)\|^{p}dt\leq
C+C\|h\|_{-1}^{p}+p\|u(t)\|^{p-2}(u,g(u)dW).
\end{split}
\end{equation}
Therefore, by taking the expected values on both sides of \eqref{47}, with the help of Gronwall inequality, we get \eqref{46}.
\par
If Condition (g2)(iii) holds, by taking $\eta$ small enough, the above estimates imply that
\begin{equation*}
\begin{split}
&d\|u(t)\|^{p}+\frac{p}{2}\|u(t)\|^{p-2}\|u(t)\|_{1}^{2}dt\\
\leq &C+\frac{\varepsilon}{2}\|u(t)\|^{p}+\frac{p(p-1)}{2}(1+\varepsilon)L_{3}^{2}\|u(t)\|^{p}
+C_{\varepsilon}\|h\|_{-1}^{p}+p\|u(t)\|^{p-2}(u,g(u)dW).
\end{split}
\end{equation*}
It follows from Poincar\'{e} inequality that
\begin{equation}\label{48}
\begin{split}
d\|u(t)\|^{p}+[\frac{p}{2}-\frac{\varepsilon}{2}-\frac{p(p-1)}{2}(1+\varepsilon)L_{3}^{2}]\|u(t)\|^{p}dt\leq
C+C\|h\|_{-1}^{p}+p\|u(t)\|^{p-2}(u,g(u)dW).
\end{split}
\end{equation}
If $\varepsilon$ is sufficient small, we have $\frac{p}{2}-\frac{\varepsilon}{2}-\frac{p(p-1)}{2}(1+\varepsilon)L_{3}^{2}>0,$
by taking the
expected values on both sides of \eqref{48}, with the help of Gronwall inequality, we get \eqref{46}.
\par
This completes the proof.
\end{proof}

\begin{lemma}\label{L4}
Let $h=h(x)$, Conditions (g1)-(g2) hold and $L_{3}<1$. For any $p$ in \eqref{55}, there exists some constant $C_{p}>0$ such that
\begin{equation*}
\begin{split}
\sup\limits_{t\geq0}\mathbb{E}\|v(t)\|^{p}\leq C_{p}(\|u_{0}\|^{p}+\|v_{0}\|^{p}+1).
\end{split}
\end{equation*}
\end{lemma}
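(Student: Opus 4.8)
The plan is to mirror the proof of Lemma \ref{L3}, applying the It\^{o} formula to $\|v(t)\|^{p}$ for the solution $v$ of \eqref{Mz} and treating the extra feedback term $-\lambda P_{N}(u-v)$ as a perturbation forced by $u$. Since $(v,Av)=\|v\|_{1}^{2}$ and $(v,B(v))=0$ on $\mathbb{T}$ exactly as for $u$, the It\^{o} expansion would read
\begin{equation*}
\begin{split}
d\|v(t)\|^{p}+p\|v(t)\|^{p-2}\|v(t)\|_{1}^{2}dt=&\frac{p(p-1)}{2}\|v(t)\|^{p-2}\|g(v)\|_{HS}^{2}dt+p\|v(t)\|^{p-2}(v,h)dt\\
&+p\lambda\|v(t)\|^{p-2}(v,P_{N}(u-v))dt+p\|v(t)\|^{p-2}(v,g(v)dW),
\end{split}
\end{equation*}
so the only new term relative to the corresponding identity in the proof of Lemma \ref{L3} is the one carrying $\lambda$.

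The key observation is that this new term is stabilising in $v$ and only forces through $u$. First I would write $\lambda(v,P_{N}(u-v))=\lambda(P_{N}v,u)-\lambda\|P_{N}v\|^{2}\leq\frac{\lambda}{2}\|u\|^{2}-\frac{\lambda}{2}\|P_{N}v\|^{2}\leq\frac{\lambda}{2}\|u\|^{2}$, discarding the nonpositive dissipative part, and then use Young's inequality to obtain $p\lambda\|v\|^{p-2}(v,P_{N}(u-v))\leq\varepsilon\|v\|^{p}+C_{\varepsilon}\|u\|^{p}$ for any $\varepsilon>0$. Handling $\|g(v)\|_{HS}$, the term $(v,h)$, and the Poincar\'{e} reduction of $\|v\|_{1}^{2}$ exactly as in Lemma \ref{L3} — and using $p<1+\frac{1}{L_{3}^{2}}$ under (g2)(iii) so that the dissipation coefficient $\frac{p}{2}-\frac{p(p-1)}{2}(1+\varepsilon)L_{3}^{2}$ stays strictly positive even after absorbing the additional $\varepsilon$ — I would arrive, upon taking expectations (the martingale term dropping out as it tacitly does in Lemma \ref{L3}), at a linear differential inequality
\begin{equation*}
\begin{split}
\frac{d}{dt}\mathbb{E}\|v(t)\|^{p}+\beta_{p}\mathbb{E}\|v(t)\|^{p}\leq C+C\,\mathbb{E}\|u(t)\|^{p}
\end{split}
\end{equation*}
with some $\beta_{p}>0$.

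Inserting the bound $\mathbb{E}\|u(t)\|^{p}\leq\|u_{0}\|^{p}e^{-\gamma_{p}t}+C_{p}$ from Lemma \ref{L3} and applying Gronwall's inequality with integrating factor $e^{\beta_{p}t}$ yields
\begin{equation*}
\begin{split}
\mathbb{E}\|v(t)\|^{p}\leq e^{-\beta_{p}t}\|v_{0}\|^{p}+C\int_{0}^{t}e^{-\beta_{p}(t-s)}\big(1+\|u_{0}\|^{p}e^{-\gamma_{p}s}\big)\,ds.
\end{split}
\end{equation*}
The first convolution integral is bounded by $C/\beta_{p}$, and for the second I would estimate $\int_{0}^{t}e^{-\beta_{p}(t-s)}e^{-\gamma_{p}s}\,ds$ uniformly in $t$, distinguishing the cases $\beta_{p}\gtrless\gamma_{p}$ together with the borderline $\beta_{p}=\gamma_{p}$ (where $te^{-\beta_{p}t}$ stays bounded), to conclude it is $\leq C$ for all $t\geq0$. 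This gives $\mathbb{E}\|v(t)\|^{p}\leq C_{p}(\|u_{0}\|^{p}+\|v_{0}\|^{p}+1)$ uniformly in $t$, and taking the supremum finishes the proof. The main obstacle is the bookkeeping under (g2)(iii): one must keep the feedback absorption parameter $\varepsilon$ small enough that $\beta_{p}$ remains strictly positive despite the fixed value $\lambda=\frac{\lambda_{N}}{2}$, and then extract a bound on the $u$-driven convolution that is genuinely uniform in $t$ irrespective of the sign of $\beta_{p}-\gamma_{p}$.
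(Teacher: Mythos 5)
Your proposal is correct and follows essentially the same route as the paper: It\^{o} formula for $\|v(t)\|^{p}$, Young's inequality on the feedback term $p\lambda\|v\|^{p-2}(v,P_{N}(u-v))$ after discarding its nonpositive part, absorption of the small $\varepsilon\|v\|^{p}$ contributions into the dissipation (with the same $p<1+\frac{1}{L_{3}^{2}}$ bookkeeping under (g2)(iii)), and then Gronwall combined with the decay bound $\mathbb{E}\|u(t)\|^{p}\leq\|u_{0}\|^{p}e^{-\gamma_{p}t}+C_{p}$ from Lemma \ref{L3}. The paper compresses the final Gronwall/convolution step into the phrase ``by the same argument as in the proofs of Lemma \ref{L2} and Lemma \ref{L3}''; your explicit uniform-in-$t$ estimate of $\int_{0}^{t}e^{-\beta_{p}(t-s)}e^{-\gamma_{p}s}ds$, including the case $\beta_{p}=\gamma_{p}$, simply fills in what the paper leaves implicit.
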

\begin{proof}
By applying It\^{o} formula to $\|v(t)\|^{p}$, we have
\begin{equation}\label{51}
\begin{split}
&d\|v(t)\|^{p}+p\|v(t)\|^{p-2}\|v(t)\|_{1}^{2}dt=\\
&[\frac{p(p-1)}{2}\|v(t)\|^{p-2}\|g(v)\|_{HS}^{2}+p\|v(t)\|^{p-2}(v,h)+p\lambda\|v(t)\|^{p-2}(v,P_{N}(u-v))]dt\\
&+p\|v(t)\|^{p-2}(v,g(v)dW).
\end{split}
\end{equation}
We only need to note the fact
\begin{equation*}
\begin{split}
\|v\|^{p-2}(v,P_{N}(u-v))\leq\|v\|^{p-2}(\frac{1}{2\varepsilon}\|u\|^{2}+\frac{\varepsilon}{2}\|v\|^{2})
\end{split}
\end{equation*}
for any $\varepsilon>0.$ By choosing $\varepsilon$ sufficiently small,
there exist $\bar{a}_{p}>0$ and $C_{p}>0$ such that
\begin{equation*}
\begin{split}
d\|v(t)\|^{p}+\bar{a}_{p}\|v(t)\|^{p}dt\leq C_{p}[(1+\|u(t)\|^{p})dt+\|v(t)\|^{p-2}(v,g(v)dW)].
\end{split}
\end{equation*}
By the same argument as in the proofs of Lemma \ref{L2} and Lemma \ref{L3}, we can finish the proof.
\end{proof}
\par~~
\par
Now, we establish some estimates in probability.
\begin{proposition}\label{pro13}
Let $h=h(x)$, Conditions (g1)-(g2) hold and $L_{3}<1$.
\par
(1) If (g2)(i) holds,
\begin{equation}
\begin{split}
\mathbb{P}\left(\sup\limits_{t\geq 0}[\|u(t)\|^{2}+\int_{0}^{t}\|u(s)\|_{1}^{2}ds-\|u_{0}\|^{2}-bt]\geq R\right)\leq e^{-\alpha R},
\end{split}
\end{equation}
where $\alpha=\frac{1}{8K_{1}^{2}}$.
\par
(2) If (g2)(ii) or (g2)(iii) holds, for any $p$ in \eqref{54}, we have
\begin{equation}
\begin{split}
\mathbb{P}\left(\sup\limits_{t\geq T}[\|u(t)\|^{2}+\int_{0}^{t}\|u(s)\|_{1}^{2}ds-\|u_{0}\|^{2}-C_{b}(t+1)]\geq R\right)\leq \frac{C(1+\|u_{0}\|^{2p})}{(T+R)^{\frac{p}{2}-1}},
\end{split}
\end{equation}
for all $T\geq0,R>0,$ where $C_{b}=\max\{b+1,2\}.$
\end{proposition}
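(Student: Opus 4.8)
The plan is to read everything off the energy balance already obtained in the proof of Lemma \ref{L2}. Applying It\^o's formula to $\|u(t)\|^{2}$ and absorbing the gradient terms exactly as in \eqref{44} gives
\[
\|u(t)\|^{2}+\tfrac{3}{2}\int_{0}^{t}\|u(s)\|_{1}^{2}\,ds\le \|u_{0}\|^{2}+bt+M(t),\qquad M(t)=2\int_{0}^{t}(u,g(u)\,dW),
\]
where $M$ is a continuous local martingale whose quadratic variation obeys
\[
\langle M\rangle_{t}=4\int_{0}^{t}\|g(u)^{*}u\|_{U}^{2}\,ds\le 4\int_{0}^{t}\|g(u)\|_{HS}^{2}\|u\|^{2}\,ds\le 4\int_{0}^{t}\|g(u)\|_{HS}^{2}\|u\|_{1}^{2}\,ds .
\]
Writing $Z(t)$ for the bracketed quantity in the statement, the surplus $\tfrac32\int-\int=\tfrac12\int\|u\|_1^2$ (and, in case (2), the surplus $C_b(t+1)-bt\ge t+2$) shows that $Z(t)$ is dominated by $M(t)$ minus a nonnegative increasing ``gain''. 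This reduction of $Z$ to a martingale carrying a controllable negative drift is the common backbone of both estimates.

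First I would treat case (1), where $\|g\|_{HS}\le K_1$. Then $\langle M\rangle_t\le 4K_1^2\int_0^t\|u\|_1^2\,ds$, so the gain beats the quadratic variation:
\[
Z(t)\le M(t)-\tfrac12\int_0^t\|u\|_1^2\,ds\le M(t)-\tfrac{1}{8K_1^2}\langle M\rangle_t .
\]
Setting $\alpha=\tfrac1{8K_1^2}$ (admissible because $\tfrac\alpha2\le\tfrac1{8K_1^2}$), the process $\mathcal E_\alpha(t)=\exp\bigl(\alpha M(t)-\tfrac{\alpha^2}2\langle M\rangle_t\bigr)$ is a nonnegative supermartingale with $\mathcal E_\alpha(0)=1$, and the line above gives $\exp(\alpha Z(t))\le\mathcal E_\alpha(t)$. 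Doob's maximal inequality for nonnegative supermartingales then yields $\mathbb P(\sup_{t\ge0}Z(t)\ge R)=\mathbb P(\sup_{t\ge0}\mathcal E_\alpha(t)\ge e^{\alpha R})\le e^{-\alpha R}$, which is exactly (1).

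For case (2) the noise has only sublinear or linear growth, $\langle M\rangle_t$ is no longer uniformly bounded, and the exponential supermartingale is unavailable, so only a polynomial rate is possible. Here I would use the negative drift to pin the level at which $Z$ can be hit. Let $\tau=\inf\{t\ge T:Z(t)\ge R\}$; on $\{\tau<\infty\}$ the energy balance forces $M(\tau)\ge R+\tfrac12\int_0^\tau\|u\|_1^2\,ds+\tau+2\ge R+\tau$, whence by continuity of $Z$
\[
\Big\{\sup_{t\ge T}Z(t)\ge R\Big\}\subseteq\bigcup_{n\ge\lfloor T\rfloor}\Big\{\sup_{0\le t\le n+1}M(t)\ge R+n\Big\}.
\]
Each term I would bound by Doob's $L^q$ inequality and Burkholder--Davis--Gundy, $\mathbb P(\sup_{t\le n+1}M(t)\ge R+n)\le c_q(R+n)^{-q}\,\mathbb E\langle M\rangle_{n+1}^{q/2}$, and then estimate $\mathbb E\langle M\rangle_{n+1}^{q/2}$ by H\"older in time together with $\|g\|_{HS}^2\|u\|^2\le C(1+\|u\|^{2\kappa})$, where $\kappa=1+\varrho$ under \textbf{(g2)}(ii) and $\kappa=2$ under \textbf{(g2)}(iii). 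The H\"older step produces a fixed moment $\mathbb E\|u(s)\|^{r}$ with $r=r(p)$; by the uniform-in-time moment bound of Lemma \ref{L3} one has $\sup_s\mathbb E\|u(s)\|^{r}\le C(1+\|u_0\|^{2p})$ precisely when $r$ lies in the admissible window \eqref{55}, and it is exactly this requirement that forces the restriction \eqref{54}. Taking $q=p$ gives $\mathbb E\langle M\rangle_{n+1}^{p/2}\le C(n+1)^{p/2}(1+\|u_0\|^{2p})$, and summing the $p$-series $\sum_{n\ge\lfloor T\rfloor}(n+1)^{p/2}(R+n)^{-p}$, which converges thanks to the \emph{strict} inequality $p>2$ in \eqref{54} and behaves like $(T+R)^{-(p/2-1)}$, yields the claimed estimate.

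The routine ingredients (the It\^o identity, the absorption constants, the constant $c_q$ in BDG) I would simply quote from Lemmas \ref{L2}--\ref{L3}. The genuine obstacle is case (2): the supremum runs over the infinite horizon $t\ge T$ while $\langle M\rangle_t$ grows with $t$ and is controlled only through high moments of $\|u\|$. Two devices make it work: the negative drift produced by the gain $\tfrac12\int\|u\|_1^2$ and by the surplus $C_b-b\ge1$, which raises the effective hitting threshold to $R+\tau$ and thereby makes the block sum over time summable; and the need to keep the moment exponent $r(p)$ inside the window of \eqref{55} where $\sup_t\mathbb E\|u\|^{r}<\infty$, which is the real role of hypothesis \eqref{54}. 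Reconciling this moment bookkeeping with the target exponent $p/2-1$ is the delicate point that must be handled with care.
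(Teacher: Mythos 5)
Your proposal is correct and follows essentially the same route as the paper's own proof: part (1) absorbs the bound $[M](t)\le 4K_{1}^{2}\int_{0}^{t}\|u\|_{1}^{2}\,ds$ into the dissipation surplus from \eqref{44} and concludes by the exponential martingale inequality (which you merely unpack into the explicit supermartingale $\mathcal{E}_{\alpha}$ plus Doob's maximal inequality, whereas the paper cites it as a black box), and part (2) uses the identical unit-block decomposition $\bigcup_{n\ge[T]}\{M^{*}(n+1)\ge R+n\}$, Burkholder--Davis--Gundy with Chebyshev, H\"older in time, the moment bounds of Lemma \ref{L3}, and summation of the resulting series to obtain the rate $(T+R)^{-(p/2-1)}$. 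The one delicate point you flag -- that the H\"older step requires the moment exponent $\kappa p$ (equal to $2p$ under (g2)(iii)) to lie in the window \eqref{55} -- is likewise present, and handled no more explicitly, in the paper's own proof.
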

\begin{proof}
Let us first recall \eqref{44}
\begin{equation*}
\begin{split}
\|u(t)\|^{2}+\frac{3}{2}\int_{0}^{t}\|u(s)\|_{1}^{2}ds\leq \|u_{0}\|^{2}+bt+M(t).
\end{split}
\end{equation*}
\par
(1) Under Condition (g2)(i), we can see that the quadratic variation of $M$ has the following estimate
\begin{equation*}
\begin{split}
[M](t)\leq 4K_{1}^{2}\int_{0}^{t}\|u(s)\|^{2}ds\leq 4K_{1}^{2}\int_{0}^{t}\|u(s)\|_{1}^{2}ds,
\end{split}
\end{equation*}
this implies that
\begin{equation*}
\begin{split}
\|u(t)\|^{2}+\int_{0}^{t}\|u(s)\|_{1}^{2}ds\leq \|u_{0}\|^{2}+bt+(M(t)-\alpha[M](t)).
\end{split}
\end{equation*}
By applying the exponential martingale inequality, we have
\begin{equation*}
\begin{split}
\mathbb{P}\left(\sup\limits_{t\geq 0}[\|u(t)\|^{2}+\int_{0}^{t}\|u(s)\|_{1}^{2}ds-\|u_{0}\|^{2}-bt]\geq R\right)
&\leq\mathbb{P}(\sup\limits_{t\geq 0}[M(t)-\alpha[M](t)]\geq R)\\
&\leq e^{-\alpha R}.
\end{split}
\end{equation*}
\par
(2) Under Condition (g2)(ii) or (g2)(iii), we have
\begin{equation*}
\begin{split}
\mathbb{P}(\sup\limits_{t\geq T}[\|u(t)\|^{2}+\int_{0}^{t}\|u(s)\|_{1}^{2}ds-\|u_{0}\|^{2}-C_{b}(t+1)]\geq R)
\leq \mathbb{P}(\sup\limits_{t\geq T}[M(t)-t-2]\geq R).
\end{split}
\end{equation*}
We define $M^{*}(t)=\sup\limits_{s\in[0,t]}|M(s)|.$ By means of the Young inequality, under Condition (g2)(ii) or (g2)(iii), we can
estimate the quadratic variation $[M](t)$ as follows
\begin{equation*}
\begin{split}
[M](t)\leq 4\int_{0}^{t}\|u(s)\|^{2}\|g(u(s))\|_{HS}^{2}ds\leq C\int_{0}^{t}(1+\|u(s)\|^{4})ds.
\end{split}
\end{equation*}
According to the Burkholder-Davis-Gundy and H\"{o}lder inequalities and Lemma \ref{L3}, we have
\begin{equation}\label{49}
\begin{split}
\mathbb{E}[M^{*}(t)]^{p}
&\leq C_{p}\mathbb{E}([M](t))^{\frac{p}{2}}\leq C\mathbb{E}(\int_{0}^{t}(1+\|u(s)\|^{4})ds)^{\frac{p}{2}}\\
&\leq C_{p}t^{\frac{p-2}{2}}\mathbb{E}\int_{0}^{t}(1+\|u(s)\|^{2p})ds\leq C_{p}(t+1)^{\frac{p}{2}}(1+\|u_{0}\|^{2p}).
\end{split}
\end{equation}
\par
Noting the following fact that for all $T\geq0,R>0,$
\begin{equation*}
\begin{split}
\{\sup\limits_{t\geq T}[M(t)-t-2]\geq R\}
\subset &\bigcup\limits_{m\geq [T]}\{\sup\limits_{t\in [m,m+1)}[M(t)-t-2]\geq R\}\\
\subset &\bigcup\limits_{m\geq [T]}\{M^{*}(m+1)\geq R+m+2\},
\end{split}
\end{equation*}
it follows from the above analysis, the Chebyshev inequality, \eqref{49} and noting $p>2$ that
\begin{equation*}
\begin{split}
\mathbb{P}\{\sup\limits_{t\geq T}[M(t)-t-2]\geq R\}
&\leq\mathbb{P}(\bigcup\limits_{m\geq [T]}\{M^{*}(m+1)\geq R+m+2\})\\
&\leq\sum\limits_{m\geq [T]}\mathbb{P}(M^{*}(m+1)\geq R+m+2)\\
&\leq\sum\limits_{m\geq [T]}\frac{\mathbb{E}(M^{*}(m+1))^{p}}{(R+m+2)^{p}}\\
&\leq(1+\|u_{0}\|^{2p})\sum\limits_{m\geq [T]}\frac{(m+2)^{\frac{p}{2}}}{(R+m+2)^{p}}\\
&\leq(1+\|u_{0}\|^{2p})\sum\limits_{m\geq [T]}\frac{1}{(R+m+2)^{\frac{p}{2}}}\\
&\leq C\frac{1+\|u_{0}\|^{2p}}{(R+T)^{\frac{p}{2}-1}}.
\end{split}
\end{equation*}
\par
This completes the proof.

\end{proof}

\subsection{Proof of Theorem \ref{FPE}}

\begin{lemma}\label{L1}
Let $h=h(x)$, $\lambda\geq\frac{\lambda_{N}}{2},$ Conditions (g1)-(g2) hold and $L_{3}<1$.
Then we have
\begin{equation*}
\begin{split}
\mathbb{E}e^{\Gamma(t\wedge\tau)}\|w(t\wedge\tau)\|^{2}+\frac{\lambda_{N}}{2}\mathbb{E}\int_{0}^{t\wedge\tau}e^{\Gamma(s)}\|w(s)\|^{2}ds\leq \|w(0)\|^{2},
\end{split}
\end{equation*}
where $\Gamma(t)$ is defined in \eqref{50}.
\end{lemma}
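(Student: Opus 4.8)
The plan is to work with the difference $w=u-v$ and run a weighted energy estimate localized by the stopping time $\tau$. Subtracting \eqref{Mz} from \eqref{My} gives, with $\lambda=\tfrac{\lambda_N}{2}$,
\[
dw+\bigl[Aw+(B(u)-B(v))+\lambda P_{N}w\bigr]dt=(g(u)-g(v))\,dW .
\]
Applying the It\^o formula to $\|w\|^{2}$ produces
\[
d\|w\|^{2}+2(w,Aw)\,dt+2\lambda(w,P_{N}w)\,dt=-2(w,B(u)-B(v))\,dt+\|g(u)-g(v)\|_{HS}^{2}\,dt+2(w,(g(u)-g(v))dW).
\]
First I would dispose of the linear terms by integration by parts on $\mathbb{T}$: since $(w,w_{xxx})=0$ and $(w,-w_{xx})=\|w_{x}\|^{2}$, one gets $(w,Aw)=\|w\|_{1}^{2}$, while $(w,P_{N}w)=\|P_{N}w\|^{2}$.

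The core algebraic step is the nonlinear term. Writing $B(u)-B(v)=uw_{x}+wv_{x}$ and integrating by parts, $(w,B(u)-B(v))=\int_{\mathbb{T}}(v_{x}-\tfrac12 u_{x})w^{2}\,dx$, which I would control using the one-dimensional interpolation $\|w\|_{L^{4}}^{2}\le C\|w\|^{3/2}\|w\|_{1}^{1/2}$ together with Young's inequality to obtain
\[
2|(w,B(u)-B(v))|\le \|w\|_{1}^{2}+C(\|u\|_{1}+\|v\|_{1})^{4/3}\|w\|^{2}.
\]
For the trace term I would invoke the Lipschitz hypothesis (g1) in the form $\|g(u)-g(v)\|_{HS}^{2}\le L_{g}^{2}\|w\|^{2}$. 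After absorbing one copy of $\|w\|_{1}^{2}$, the surviving dissipation $\|w\|_{1}^{2}+\lambda_{N}\|P_{N}w\|^{2}$ is handled by the spectral gap: decomposing $w=P_{N}w+(I-P_{N})w$ and using $\|w\|_{1}^{2}\ge \lambda_{N+1}\|(I-P_{N})w\|^{2}$ with $\lambda_{N+1}\ge\lambda_{N}$ yields $\|w\|_{1}^{2}+\lambda_{N}\|P_{N}w\|^{2}\ge \lambda_{N}\|w\|^{2}$. Collecting everything gives the pathwise differential inequality
\[
d\|w\|^{2}+\lambda_{N}\|w\|^{2}\,dt\le \bigl[C(\|u\|_{1}+\|v\|_{1})^{4/3}+L_{g}^{2}\bigr]\|w\|^{2}\,dt+2(w,(g(u)-g(v))dW).
\]

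With the weight $\Gamma$ of \eqref{50}, which is designed precisely so that $\Gamma'(t)=\tfrac{\lambda_N}{2}-C(\|u\|_{1}+\|v\|_{1})^{4/3}-L_{g}^{2}$, I would apply the It\^o formula to $e^{\Gamma}\|w\|^{2}$; the drift then collapses exactly to $-\tfrac{\lambda_N}{2}e^{\Gamma}\|w\|^{2}$, so that
\[
d\bigl(e^{\Gamma}\|w\|^{2}\bigr)+\tfrac{\lambda_N}{2}e^{\Gamma}\|w\|^{2}\,dt\le 2e^{\Gamma}(w,(g(u)-g(v))dW).
\]
Integrating on $[0,t\wedge\tau]$ and taking expectations kills the stochastic integral and delivers the claimed inequality. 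Here $\tau$ is exactly what guarantees that the local martingale on the right is a genuine martingale of zero mean: stopping (for instance when $\|u\|$, $\|v\|$, or $\int_{0}^{t}(\|u\|_{1}^{2}+\|v\|_{1}^{2})\,ds$ first leaves a large ball) renders the integrand bounded.

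The main obstacle is the nonlinear term: the point is to force the coefficient of $\|w\|^{2}$ into a time-integrable shape (the factor $(\|u\|_{1}+\|v\|_{1})^{4/3}$) while leaving no uncontrolled multiple of $\|w\|_{1}^{2}$, which is what dictates the sharp $L^{4}$ interpolation and the particular Young exponent. The localization by $\tau$ is a technical but essential second ingredient, and the a.s.\ finiteness and later integrability of $\Gamma$ needed to exploit this estimate follow from the moment bounds in Lemmas \ref{L2}--\ref{L4}.
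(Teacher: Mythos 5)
Your overall architecture (It\^o on $\|w\|^{2}$, absorbing the nonlinearity, the spectral-gap inequality $\|w\|_{1}^{2}+\lambda_{N}\|P_{N}w\|^{2}\geq\lambda_{N}\|w\|^{2}$, exponential reweighting, stopping and taking expectations) matches the paper's proof. But there is a genuine gap: you do not prove the lemma as stated, because your weight is not the $\Gamma$ of \eqref{50}. The paper defines $\Gamma(t)=(\tfrac{\lambda_{N}}{2}-L_{g}^{2})t-C_{0}\int_{0}^{t}\|u(s)\|_{1}^{2}\,ds$, i.e.\ the correction involves \emph{only} $\|u\|_{1}^{2}$, whereas your estimate of the nonlinear term via $\|w\|_{L^{4}}$ and Young produces the coefficient $C(\|u\|_{1}+\|v\|_{1})^{4/3}+L_{g}^{2}$, and you then assert (incorrectly) that this is what \eqref{50} prescribes. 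The two weights are not comparable pointwise, so your inequality does not imply the stated one. This is not a cosmetic issue: downstream, in the proof of Theorem \ref{FPE}, the stopping time $\tau_{R,\beta}$ and the tail bounds of Proposition \ref{pro13} control precisely $\int_{0}^{t}\|u(s)\|_{1}^{2}\,ds$ for the solution $u$ of the \emph{original} equation; no analogous probabilistic control of $\int_{0}^{t}\|v\|_{1}^{2}\,ds$ for the nudged solution is available (Lemma \ref{L4} only gives moment bounds on $\|v(t)\|$), so a $\Gamma$ containing $v$ would break the argument.

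The fix is a cancellation you already have in hand but did not exploit. From your own identity $(w,B(u)-B(v))=\int_{\mathbb{T}}(v_{x}-\tfrac12 u_{x})w^{2}\,dx$ and the fact that $\int_{\mathbb{T}}w^{2}w_{x}\,dx=\tfrac13\int_{\mathbb{T}}(w^{3})_{x}\,dx=0$ (equivalently, write $B(u)-B(v)=(uw)_{x}-\tfrac12(w^{2})_{x}$ as the paper does), one gets $\int v_{x}w^{2}=\int u_{x}w^{2}$, hence
\begin{equation*}
-(B(u)-B(v),w)=(uw,w_{x}),\qquad
|(uw,w_{x})|\leq \|u\|_{\infty}\|w\|\,\|w_{x}\|\leq \tfrac12\|w\|_{1}^{2}+\tfrac{C_{0}}{2}\|u\|_{1}^{2}\|w\|^{2},
\end{equation*}
using $H^{1}(\mathbb{T})\hookrightarrow L^{\infty}(\mathbb{T})$. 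This removes $v$ (and the self-interaction of $w$) from the drift entirely and yields exactly the coefficient $C_{0}\|u\|_{1}^{2}$ that defines $\Gamma$ in \eqref{50}; the rest of your argument then goes through verbatim. Your $L^{4}$-interpolation route is also correct arithmetic, but it is both unnecessary and, because it keeps $v$, incompatible with the lemma's statement and its later use.
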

\begin{proof}
Let $w:=u-v,$ then $w$ satisfies that
\begin{equation*}
\begin{split}
dw+[Aw+(B(u)-B(v))+\lambda P_{N}w]dt=(g(u)-g(v))dW.
\end{split}
\end{equation*}
By applying It\^{o} formula to $\|w(t)\|^{2}$, we have
\begin{equation*}
\begin{split}
\frac{1}{2}d\|w\|^{2}+[\|w\|_{1}^{2}+\lambda\|P_{N}w\|^{2}]dt=&[-(B(u)-B(v),w)+\frac{1}{2}\|g(u)-g(v)\|_{HS}^{2}]dt\\
&+(w,(g(u)-g(v))dW).
\end{split}
\end{equation*}
Noting the fact $B(u)-B(v)=(uw)_{x}-\frac{1}{2}(w^{2})_{x},$ we have
\begin{equation*}
\begin{split}
-(B(u)-B(v),w)=-((uw)_{x},w)=(uw,w_{x}),
\end{split}
\end{equation*}
there exists a constant $C_{0}$ such that
\begin{equation*}
\begin{split}
|(B(u)-B(v),w)|\leq \frac{C_{0}}{2}\|u\|_{1}^{2}\|w\|^{2}+\frac{1}{2}\|w\|_{1}^{2},
\end{split}
\end{equation*}
this leads to
\begin{equation*}
\begin{split}
\frac{1}{2}d\|w\|^{2}+[\frac{1}{2}\|w\|_{1}^{2}+\lambda\|P_{N}w\|^{2}]dt
\leq(\frac{L_{g}^{2}}{2}+\frac{C_{0}}{2}\|u\|_{1}^{2})\|w\|^{2}dt+dM(t),
\end{split}
\end{equation*}
where $M(t)=\int_{0}^{t}(w(s),(g(u)-g(v))dW).$
If we choose $\lambda=\frac{\lambda_{N}}{2}$ and fixed, according to the generalized inverse Poincar\'{e} inequality $\lambda_{N}\|Q_{N}w(t)\|^{2}\leq\|w(t)\|_{1}^{2}$, we have
$$\frac{1}{2}\|w(t)\|_{1}^{2}+\lambda\|P_{N}w(t)\|^{2}\geq \frac{\lambda_{N}}{2}\|w(t)\|^{2},$$
this leads to
\begin{equation*}
\begin{split}
d\|w(t)\|^{2}+[\lambda_{N}-L_{g}^{2}-C_{0}\|u(t)\|_{1}^{2}]\|w(t)\|^{2}dt\leq 2dM(t).
\end{split}
\end{equation*}
We define
\begin{equation}\label{50}
\begin{split}
\Gamma(t):=(\frac{\lambda_{N}}{2}-L_{g}^{2})t-C_{0}\int_{0}^{t}\|u(s)\|_{1}^{2}ds,
\end{split}
\end{equation}
thus, it holds that
\begin{equation*}
\begin{split}
d\|w(t)\|^{2}+[\frac{\lambda_{N}}{2}+\Gamma^{'}(t)]\|w(t)\|^{2}dt\leq 2dM(t),
\end{split}
\end{equation*}
this leads to
\begin{equation*}
\begin{split}
d(e^{\Gamma(t)}\|w(t)\|^{2})+\frac{\lambda_{N}}{2}e^{\Gamma(t)}\|w(t)\|^{2}dt\leq 2e^{\Gamma(t)}dM(t),
\end{split}
\end{equation*}
integrating the above estimate from $0$ to $t\wedge\tau$ and taking expectation, this completes the proof.
\end{proof}
\par
Now, we are in a position to prove Theorem \ref{FPE}.
\begin{proof}[Proof of Theorem \ref{FPE}]The proof of Theorem \ref{FPE} is divided into several steps.
\par
\textbf{Step 1.}
Let us introduce the following stopping time
\begin{equation*}
\begin{split}
\tau_{R,\beta}:=
\left\{
\begin{array}{lll}
\inf \{t\geq0:C_{0}\int_{0}^{t}\|u(s)\|_{1}^{2}ds-(\frac{\lambda_{N}}{4}-L_{g}^{2})t-\beta\geq R\}\\
+\infty,~~~~{\rm{if}}~C_{0}\int_{0}^{t}\|u(s)\|_{1}^{2}ds-(\frac{\lambda_{N}}{4}-L_{g}^{2})t-\beta< R,~\forall t\geq0.
\end{array}
\right.
\end{split}
\end{equation*}
We can see that if $\tau_{R,\beta}=+\infty,$ then $\frac{\lambda _{N}}{4}t- (R+\beta)\leq \Gamma(t)$ for $\forall t\geq0$.
With the help of Lemma \ref{L1}, this leads to the following fact: For any $R,\beta>0,$ we have
\begin{equation}\label{2}
\begin{split}
\mathbb{E}(\textbf{1}_{(\tau_{R,\beta}=+\infty)}\|w(t)\|^{2})\leq e^{R+\beta-\frac{\lambda_{N}}{4}t}\|w(0)\|^{2}.
\end{split}
\end{equation}
\par
\textbf{Step 2.} We want to estimate $\mathbb{P}(\tau_{R,\beta}<+\infty)$ in terms of $R.$
\par
Indeed, we define the set
\begin{equation*}
\begin{split}
A_{R,\beta}:=\left\{\sup_{t\geq0}[C_{0}\int_{0}^{t}\|u(s)\|_{1}^{2}ds-(\frac{\lambda_{N}}{4}-L_{g}^{2})t]-\beta\geq R\right\}.
\end{split}
\end{equation*}
Noting the fact $\mathbb{P}(\tau_{R,\beta}<+\infty)\leq\mathbb{P}(A_{R,\beta})$,
by taking $N,\beta$ large enough, with the help of Proposition \ref{pro13},
we can prove that there exists a positive integer $N,\beta$ large enough
such that
\begin{equation}\label{7}
\begin{split}
\mathbb{P}(\tau_{R,\beta}<+\infty)\leq
\left\{
\begin{array}{lll}
2e^{-CR} \\
\frac{C(1+\|u_{0}\|^{2p}+\|v_{0}\|^{2p})}{R^{\frac{p}{2}-1}}
\end{array}
\right.
\begin{array}{lll}
under~(g2)(i),\\
under~(g2)(ii)~or~(g2)(iii),
\end{array}
\end{split}
\end{equation}
for any $p$ in \eqref{54}, where $C$ is a positive constant independent of $R, \beta$ and $u_{0},v_{0}.$
\par
\textbf{Step 3.} Proof of \eqref{62}.
\par
Indeed, it follows from \eqref{2} and the following fact that
\begin{equation*}
\begin{split}
\mathbb{E}\|u(t)-v(t)\|^{2}
&=\mathbb{E}(\textbf{1}_{(\tau_{R,\beta}=+\infty)}\|u(t)-v(t)\|^{2})+\mathbb{E}(\textbf{1}_{(\tau_{R,\beta}<+\infty)}\|u(t)-v(t)\|^{2})\\
&\leq e^{R+\beta-\frac{\lambda_{N}}{4}t}\|u_{0}-v_{0}\|^{2}+(\mathbb{P}(\tau_{R,\beta}<+\infty))^{\frac{1}{2}}(\mathbb{E}\|u(t)-v(t)\|^{4})^{\frac{1}{2}}\\
&\leq C(1+\|u_{0}\|^{2}+\|v_{0}\|^{2})(e^{R+\beta-\frac{\lambda_{N}}{8}t}+(\mathbb{P}(\tau_{R,\beta}<+\infty))^{\frac{1}{2}}).
\end{split}
\end{equation*}
We take $N,\beta$ large enough and $R=\frac{\lambda_{N}}{16}t,$ according to
\eqref{7}, we can obtain \eqref{62}.
\par
This completes the proof.
\end{proof}

\section{Proofs of Theorem \ref{MT2} and Theorem \ref{MT3}}
In this section, we apply the
asymptotic coupling method to prove Theorem \ref{MT2} and Theorem \ref{MT3}.
We now state the abstract results from \cite{AC1} and \cite{AC2} in the form that best fits our context.
\subsection{Asymptotic coupling method}
Let $H$ be a Polish space with a metric $\rho$, $\mathcal{B}(H)$ denote the Borel $\sigma-$algebra on $H,$ and
$P$ be a Markov transition kernel on $H$. Namely, we suppose that $P : H\times \mathcal{B}(H)\rightarrow[0, 1]$ such that $P(u,\cdot)$
is a probability measure for any given $u\in H$ and that $P(\cdot,A)$ is a measurable function for any fixed
$A\in\mathcal{B}(H)$. $P$ acts on bounded measurable observables $\varphi: H\rightarrow \mathbb{R}$ and
Borel probability measures $\mu$ as
\begin{equation*}
\begin{split}
P\varphi(u)=\int\varphi(v)P(u,dv),~~\mu P(A)=\int P(u,A)\mu(du),
\end{split}
\end{equation*}
respectively. A probability measure $\mu$ on $\mathcal{B}(H)$ is invariant if $\mu P=\mu$.

\par
We introduce the space of one-sided infinite sequences
\begin{equation*}
\begin{split}
H^{\mathbb{N}}=\{u:\mathbb{N}\rightarrow H\}=\{u=(u_{1},u_{2},\cdots): u_{i}\in H\},
\end{split}
\end{equation*}
with its Borel $\sigma-$field $\mathcal{B}(H^{\mathbb{N}})$, we denote by $\mathcal{P}(H^{\mathbb{N}})$ the collections of Borel probability measures on $H^{\mathbb{N}}$.
For given $\mu,\nu\in \mathcal{P}(H^{\mathbb{N}})$, we define
\begin{equation*}
\begin{split}
\mathcal{C}(\mu,\nu):=\{\xi\in \mathcal{P}(H^{\mathbb{N}}\times H^{\mathbb{N}}):\pi_{1}(\xi)=\mu,\pi_{2}(\xi)=\nu\},
\end{split}
\end{equation*}
where $\pi_{i}(\xi)$ denotes the $i-$th marginal distribution of $\xi, i=1,2.$ Any $\xi\in \mathcal{C}(\mu,\nu)$ is
called a coupling for $\mu,\nu.$ Recall that $\mu\ll\nu$ means that $\mu$ is absolutely continuous w.r.t. $\nu$, and
$\mu\sim\nu$ means that $\mu$ and $\nu$ are equivalent, i.e., mutually absolutely continuous. We
define
\begin{equation*}
\begin{split}
&\tilde{\mathcal{C}}(\mu,\nu):=\{\xi\in \mathcal{P}(H^{\mathbb{N}}\times H^{\mathbb{N}}):\pi_{1}(\xi)\sim\mu,\pi_{2}(\xi)\sim\nu\},\\
&\hat{\mathcal{C}}(\mu,\nu):=\{\xi\in \mathcal{P}(H^{\mathbb{N}}\times H^{\mathbb{N}}):\pi_{1}(\xi)\ll\mu,\pi_{2}(\xi)\ll\nu\},\\
\end{split}
\end{equation*}
and call any probability measure from the classes $\tilde{\mathcal{C}}(\mu,\nu),\hat{\mathcal{C}}(\mu,\nu)$ a generalized
coupling for $\mu,\nu.$ We define
\begin{equation*}
\begin{split}
&D:=\{(x,y)\in H^{\mathbb{N}}\times H^{\mathbb{N}} : \lim\limits_{n\rightarrow\infty}\|x(n)-u(n)\|=0\},\\
&D_{\varepsilon}^{n}:=\{(x,y)\in H^{\mathbb{N}}\times H^{\mathbb{N}} : \|x(n)-u(n)\|\leq \varepsilon\},\\
\end{split}
\end{equation*}
where $\varepsilon>0, n\in \mathbb{N}.$ The set of test functions
\begin{equation*}
\begin{split}
\mathcal{G}:=\{\varphi\in C_{b}(H):\sup\limits_{x\neq y}\frac{|\varphi(x)-\varphi(y)|}{\|x-y\|}<\infty\}
\end{split}
\end{equation*}
which is a \textit{determining measure set} in $H,$ namely, if $\mu,\nu\in \mathcal{P}(H)$ are such that $\int_{H}\varphi(u)\mu(du)=\int_{H}\varphi(u)\nu(du)$
for all $\varphi\in \mathcal{G},$ then we have $\mu=\nu.$
\begin{theorem}\label{AC1}(See \cite{AC1})
If $\mathcal{G}$ determines measures on $H$ and that $D\subset H^{\mathbb{N}}\times H^{\mathbb{N}}$ is measurable. If for each $u_{0},v_{0}\in H$, there exists a generalized coupling $\xi_{u_{0},v_{0}}\in \mathcal{\hat{C}}(\mathbb{P}_{u_{0}},\mathbb{P}_{z_{0}})$ such that $\xi_{u_{0},v_{0}}(D)> 0,$ then there is at most one $P-$invariant
probability measure $\mu\in \mathcal{P}(H).$

\end{theorem}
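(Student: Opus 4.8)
The plan is to argue by contradiction: assume $P$ has two distinct invariant measures, reduce to the case of two distinct \emph{ergodic} invariant measures, and then derive a contradiction by combining Birkhoff's ergodic theorem on the path space $H^{\mathbb{N}}$ with the two defining features of a generalized coupling carried by $D$ --- namely that bounded Lipschitz test functions have asymptotically equal Ces\`aro averages along asymptotically coalescing trajectories, and that absolute continuity of the marginals transfers almost-sure statements from the original path laws to the coupling.

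First I would perform the reduction to ergodic measures. The set of $P$-invariant probability measures on the Polish space $H$ is convex and, by the ergodic decomposition, every invariant measure is a barycenter of ergodic ones; hence if there were at most one ergodic invariant measure $\mu_{0}$, every invariant measure would equal $\mu_{0}$, contradicting the assumed existence of two distinct invariant measures. So there exist two distinct ergodic invariant measures $\mu\neq\nu$. Since $\mathcal{G}$ determines measures on $H$, I may fix a test function $\varphi\in\mathcal{G}$ with $\langle\varphi,\mu\rangle\neq\langle\varphi,\nu\rangle$.

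Next I would extract the consequences of ergodicity on path space. Writing $\mathbb{Q}_{\mu}=\int_{H}\mathbb{P}_{u_{0}}\,\mu(du_{0})$ for the stationary path law, ergodicity of $\mu$ is equivalent to ergodicity of the shift under $\mathbb{Q}_{\mu}$, so Birkhoff's theorem yields, for $\mu$-a.e. $u_{0}$, that $\mathbb{P}_{u_{0}}(B_{\mu}^{\varphi})=1$, where
\begin{equation*}
B_{\mu}^{\varphi}:=\Big\{x\in H^{\mathbb{N}}:\frac{1}{n}\sum_{k=1}^{n}\varphi(x_{k})\to\langle\varphi,\mu\rangle\Big\},
\end{equation*}
and symmetrically $\mathbb{P}_{v_{0}}(B_{\nu}^{\varphi})=1$ for $\nu$-a.e. $v_{0}$. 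I would then observe that $B_{\mu}^{\varphi}$ and $B_{\nu}^{\varphi}$ are incompatible on $D$: if $(x,y)\in D$ then $\|x_{n}-y_{n}\|\to0$, so the Lipschitz bound $|\varphi(x_{k})-\varphi(y_{k})|\leq L_{\varphi}\|x_{k}-y_{k}\|$ forces $\frac{1}{n}\sum_{k=1}^{n}(\varphi(x_{k})-\varphi(y_{k}))\to0$. A point of $B_{\mu}^{\varphi}\times B_{\nu}^{\varphi}$ lying in $D$ would therefore give $\langle\varphi,\mu\rangle=\langle\varphi,\nu\rangle$, whence $(B_{\mu}^{\varphi}\times B_{\nu}^{\varphi})\cap D=\emptyset$.

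Finally I would invoke the coupling hypothesis. Choosing $u_{0}$ and $v_{0}$ in the respective full-measure sets, so that $\mathbb{P}_{u_{0}}(B_{\mu}^{\varphi})=\mathbb{P}_{v_{0}}(B_{\nu}^{\varphi})=1$, let $\xi=\xi_{u_{0},v_{0}}\in\hat{\mathcal{C}}(\mathbb{P}_{u_{0}},\mathbb{P}_{v_{0}})$ with $\xi(D)>0$. Since $\pi_{1}(\xi)\ll\mathbb{P}_{u_{0}}$ and $\mathbb{P}_{u_{0}}((B_{\mu}^{\varphi})^{c})=0$, absolute continuity gives $\pi_{1}(\xi)((B_{\mu}^{\varphi})^{c})=0$, i.e. $\xi(B_{\mu}^{\varphi}\times H^{\mathbb{N}})=1$; likewise $\xi(H^{\mathbb{N}}\times B_{\nu}^{\varphi})=1$, hence $\xi(B_{\mu}^{\varphi}\times B_{\nu}^{\varphi})=1$. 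Combined with the disjointness above, $\xi(D)=\xi(D\cap(B_{\mu}^{\varphi}\times B_{\nu}^{\varphi}))=\xi(\emptyset)=0$, contradicting $\xi(D)>0$. Thus $\mu=\nu$ and $P$ has at most one invariant measure. The step carrying the real content --- and the one I expect to be the main obstacle --- is precisely this last passage: $\xi$ need not couple the original path laws but only measures absolutely continuous with respect to them, and the argument works exactly because $\ll$ of the marginals is just strong enough to preserve the full-measure sets $B_{\mu}^{\varphi},B_{\nu}^{\varphi}$ while the asymptotic coalescence encoded in $D$ transports the Ces\`aro limit from one coordinate to the other. The ergodic-decomposition reduction and the measurability of $B_{\mu}^{\varphi},B_{\nu}^{\varphi}$ (automatic, as $\limsup$-level sets of continuous coordinate functionals) are the remaining routine points.
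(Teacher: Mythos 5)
Your proof is correct. The paper does not prove this theorem itself --- it imports it verbatim from \cite{AC1} --- and your argument (reduction to two distinct ergodic measures, a separating test function $\varphi\in\mathcal{G}$, Birkhoff's theorem on path space giving full-measure Ces\`aro sets $B_{\mu}^{\varphi}$, $B_{\nu}^{\varphi}$, transfer of these full-measure sets through the absolutely continuous marginals of $\xi_{u_{0},v_{0}}$, and disjointness of $B_{\mu}^{\varphi}\times B_{\nu}^{\varphi}$ from $D$ via the Lipschitz bound) is precisely the proof given in that reference, so there is no gap to report.
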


\begin{theorem}\label{AC2}(See \cite{AC2})
If the transition semigroup $\{\mathcal{P}_{t}\}_{t\geq 0}$ associated with \eqref{} is
a Feller semigroup on $H,$ and for any $u_{0},v_{0}\in H$ there exists some $\xi_{u_{0},v_{0}}\in \mathcal{\hat{C}}(\mathbb{P}_{u_{0}},\mathbb{P}_{v_{0}})$
such that $\pi_{1}(\xi_{u_{0},v_{0}})\sim\mathbb{P}_{u_{0}}$, and for any $\varepsilon>0$
$
\lim\limits_{n\rightarrow\infty}\xi_{u_{0},v_{0}}(D_{\varepsilon}^{n})=1.
$
Then, there exists at most one invariant probability measure, and, if such a measure
$\mu$ exists, then for any $u_{0}\in H,$
\begin{equation*}
\begin{split}
\lim\limits_{n\rightarrow\infty}\|\mathcal{P}^{*}_{t}\delta_{u_{0}}-\mu\|_{L}^{*}=0.
\end{split}
\end{equation*}

\end{theorem}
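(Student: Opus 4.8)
The plan is to establish the convergence statement first and deduce uniqueness from it: if $\mu_1,\mu_2$ were both invariant, then $\mathcal{P}_t^*\delta_{u_0}\to\mu_i$ for each $i$ would force $\mu_1=\mu_2$, so "at most one" is automatic once convergence is proved. Thus I fix one invariant measure $\mu$ (assuming it exists) and an arbitrary $u_0\in H$, and work on the integer-time skeleton, the general $t$ following from the semigroup property and the Feller continuity. Since $\mathcal{G}$ determines measures and consists of bounded Lipschitz functions, it suffices to show $\mathcal{P}_n\varphi(u_0)\to\langle\varphi,\mu\rangle$ for every $\varphi\in\mathcal{G}$. Writing invariance as $\langle\varphi,\mu\rangle=\int_H\mathcal{P}_n\varphi(v_0)\,\mu(dv_0)$, I would bound
\[
|\mathcal{P}_n\varphi(u_0)-\langle\varphi,\mu\rangle|\le\int_H|\mathcal{P}_n\varphi(u_0)-\mathcal{P}_n\varphi(v_0)|\,\mu(dv_0),
\]
and, since the integrand is dominated by $2\|\varphi\|_\infty$, dominated convergence reduces the whole theorem to the pairwise statement $|\mathcal{P}_n\varphi(u_0)-\mathcal{P}_n\varphi(v_0)|\to0$ for $\mu$-almost every $v_0$.

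For the pairwise statement I would invoke the generalized coupling. Fix $u_0,v_0$ and take $\xi=\xi_{u_0,v_0}\in\hat{\mathcal{C}}(\mathbb{P}_{u_0},\mathbb{P}_{v_0})$ with $\pi_1(\xi)\sim\mathbb{P}_{u_0}$ and $\xi(D_\varepsilon^n)\to1$ for every $\varepsilon>0$. Writing $(\bar x,\bar y)$ for a generic point of $H^{\mathbb{N}}\times H^{\mathbb{N}}$ and using that $\varphi$ is bounded and Lipschitz,
\[
\Big|\int[\varphi(\bar x(n))-\varphi(\bar y(n))]\,d\xi\Big|\le \mathrm{Lip}(\varphi)\,\varepsilon+2\|\varphi\|_\infty\,\xi\big((D_\varepsilon^n)^c\big),
\]
so $\limsup_n$ of the left side is $\le\mathrm{Lip}(\varphi)\,\varepsilon$, and letting $\varepsilon\downarrow0$ gives $\int\varphi\,d(\pi_1(\xi))_n-\int\varphi\,d(\pi_2(\xi))_n\to0$, where $(\lambda)_n$ denotes the law of the $n$-th coordinate under $\lambda$. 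This is the only point where the asymptotic-closeness hypothesis enters, and it is the easy half.

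The main obstacle is the transfer from the coupled marginals back to the genuine transition probabilities, namely that $\mathcal{P}_n\varphi(u_0)=\int\varphi\,d(\mathbb{P}_{u_0})_n$ and $\mathcal{P}_n\varphi(v_0)=\int\varphi\,d(\mathbb{P}_{v_0})_n$, whereas the previous step only controls $(\pi_1(\xi))_n$ and $(\pi_2(\xi))_n$. Because $\pi_1(\xi)$ is merely equivalent to $\mathbb{P}_{u_0}$, the density $\Phi=d\pi_1(\xi)/d\mathbb{P}_{u_0}$ is not identically one, and a naive substitution fails since the restricted densities $\Phi_n=\mathbb{E}^{\mathbb{P}_{u_0}}[\Phi\mid\mathcal{F}_n]$ form a nontrivial martingale. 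The plan to close this gap, following \cite{AC1,AC2}, is to exploit that the asymptotic-agreement event is essentially a \emph{tail} event: after passing to a fast subsequence $n_k$ with $\xi(D_{1/k}^{n_k})\ge1-2^{-k}$ and applying Borel–Cantelli, the property $\|\bar x(n_k)-\bar y(n_k)\|\to0$ holds $\xi$-almost surely and is measurable with respect to the tail $\sigma$-field, so it carries the same probability under the equivalent measures $\pi_1(\xi)$ and $\mathbb{P}_{u_0}$. Combining this robustness under the change of measure with the Feller property, which guarantees that every weak limit point of the relatively compact family $\{\mathcal{P}_n^*\delta_{u_0}\}$ (tightness coming from the a priori moment bounds of Lemma \ref{L3}) is shift-invariant and is therefore pinned down on the determining class $\mathcal{G}$, should force $(\mathbb{P}_{u_0})_n$ and $(\mathbb{P}_{v_0})_n$ to share the same limit when tested against $\varphi$.

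Feeding this back into the first step identifies the common limit with $\langle\varphi,\mu\rangle$, which yields $\|\mathcal{P}_t^*\delta_{u_0}-\mu\|_L^*\to0$ and, as noted, uniqueness as a byproduct. I expect essentially all of the difficulty to reside in the density-transfer step of the third paragraph — converting closeness of the coupled marginals into closeness of the true transition probabilities despite $\pi_1(\xi)\ne\mathbb{P}_{u_0}$ — while the reduction to $\mathcal{G}$ and the coupling estimate are routine.
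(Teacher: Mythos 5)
Note first that the paper does not prove this statement at all: it is imported verbatim from Kulik--Scheutzow \cite{AC2} as a black-box ingredient, so your attempt has to stand on its own, and it has a genuine gap precisely at the step you yourself flag as the main obstacle. The transfer from coupled marginals to true transition probabilities cannot be done the way you sketch. Equivalence $\pi_1(\xi)\sim\mathbb{P}_{u_0}$ of path-space measures transfers \emph{almost-sure} (probability-zero-or-one) statements about functionals of the first coordinate alone; it does not transfer time-$n$ marginal expectations. Concretely, with $\Phi=d\pi_1(\xi)/d\mathbb{P}_{u_0}$ one has $\int\varphi\,d(\pi_1(\xi))_n-\mathcal{P}_n\varphi(u_0)=\mathbb{E}^{\mathbb{P}_{u_0}}\big[\varphi(\bar x(n))(\Phi-1)\big]$, and since $\mathbb{E}^{\mathbb{P}_{u_0}}[\Phi-1]=0$ this is a covariance between $\varphi(\bar x(n))$ and a fixed $L^1$ random variable; its decay as $n\to\infty$ is an asymptotic-independence (mixing) property of the chain started at $u_0$ --- i.e.\ essentially the conclusion you are trying to prove, so the argument is circular at this point. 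Your Borel--Cantelli/tail-event fix does not repair this: the event $\{\|\bar x(n_k)-\bar y(n_k)\|\to0\}$ lives on the product space $H^{\mathbb{N}}\times H^{\mathbb{N}}$, and its probability cannot be ``carried over'' by a change of measure acting on the first marginal only --- under $\mathbb{P}_{u_0}$ there is no companion path $\bar y$ for the event to refer to.

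The two auxiliary claims you use to close the gap are also unsound. Subsequential weak limit points of $\{\mathcal{P}_n^*\delta_{u_0}\}$ need not be shift-invariant (a Feller example: a continuous involution of $\mathbb{R}$ swapping two points $a,b$; the limit points of $P^n(a,\cdot)$ are $\delta_a$ and $\delta_b$, neither invariant); invariance of limits requires Ces\`aro averaging \`a la Krylov--Bogoliubov, and excluding such periodic behaviour is exactly where the hypothesis $\xi(D_\varepsilon^n)\to1$ must enter, which your appeal to Feller plus compactness never does. Moreover, tightness of $\{\mathcal{P}_n^*\delta_{u_0}\}$ via Lemma \ref{L3} is out of scope: the statement is an abstract theorem about a Feller kernel on a Polish space, with no moment hypotheses available. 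A correct argument (this is how \cite{AC2} proceeds in spirit) must route the absolute-continuity transfers through statements that \emph{are} of almost-sure type: first get uniqueness, hence ergodicity of $\mu$; apply Birkhoff's theorem to the stationary process; push the a.s.\ time-average convergence through $\pi_2(\xi)\ll\mathbb{P}_{v_0}$, then through the coupling closeness, then through $\mathbb{P}_{u_0}\ll\pi_1(\xi)$ --- this yields Ces\`aro convergence of $\mathcal{P}_n\varphi(u_0)$ --- and finally upgrade from Ces\`aro to genuine convergence by a separate, nontrivial argument exploiting the full strength of $\xi(D_\varepsilon^n)\to1$. Your outline, even granting every step in it, never engages with that upgrade.
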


\subsection{Proof of Theorem \ref{MT2}}
We first introduce the nudged stopped equation.
\par
Let Condition (g3) hold for some $M\geq N_{0}.$ Let $y$ and $z$ be the solutions to \eqref{My} and \eqref{Mz} with initial dates $u_{0},v_{0}\in H,$
respectively. We define the shift by
\begin{equation}\label{56}
\begin{split}
h(t):=\lambda f(u(t))P_{N}(u(t)-v(t)),~~t\geq 0
\end{split}
\end{equation}
and a stopping time
\begin{equation*}
\begin{split}
\sigma_{K}:=\inf\{t\geq 0:\int_{0}^{t}\|P_{N}(u(s)-v(s))\|^{2}ds\geq K\},
\end{split}
\end{equation*}
where $K>0$ will be chosen in a suitable way later on. We introduce the following nudged stopped equation
\begin{eqnarray}\label{Mz1}
\begin{array}{l}
\left\{
\begin{array}{llll}
d\tilde{v}+[A\tilde{v}+B(\tilde{v})]dt=hdt+g(\tilde{v})d\tilde{W}
\\ \tilde{v}(x,0)=v_{0}(x)
\end{array}
\right.
\end{array}
\begin{array}
{lll}{\rm{in}}~D,
\\{\rm{in}}~\mathbb{T},
\end{array}
\end{eqnarray}
where $\tilde{W}:=W(t)+\int_{0}^{t}h(s)\mathbf{1}_{s\leq \sigma_{K}}ds.$ We denote by $\Psi_{u_{0}}$ and $\tilde{\Psi}_{u_{0},v_{0}}$ the measurable maps induced by solutions to
\eqref{My} and \eqref{Mz1}, respectively, that map an underlying probability space $(\Omega,\mathcal{F},\mathbb{P})$
to $C([0,+\infty);H).$ The laws of solutions of \eqref{My} and \eqref{Mz1} are given by $\mathbb{P}\Psi_{u_{0}}^{-1}$ and $\mathbb{P}(\tilde{\Psi}_{u_{0},v_{0}})^{-1}$ respectively.

\begin{proposition}\label{pro16}
Let $h=h(x)$ and Conditions (g1)-(g2) hold, $N$ be appearing in \eqref{56}.
Then for any $K>0,\lambda>0,$ the laws of solutions to \eqref{My} and \eqref{Mz1} are mutually absolutely continuous,
namely, $\mathbb{P}\Psi_{z_{0}}^{-1}\sim\mathbb{P}(\tilde{\Psi}_{u_{0},v_{0}})^{-1}$ as measures on $C([0,+\infty);H)$.
\end{proposition}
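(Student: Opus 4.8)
The plan is to obtain Proposition~\ref{pro16} from an infinite-horizon Girsanov change of measure, the stopping time $\sigma_K$ being the device that makes the shift square-integrable with a \emph{deterministic} bound. First I would rewrite the nudged stopped equation \eqref{Mz1}: inserting $\tilde W = W + \int_0^{\cdot}\psi(s)\,ds$ with $\psi(s):=\lambda f(u(s))P_N(u(s)-v(s))\mathbf 1_{s\le\sigma_K}$ (the process \eqref{56} cut off at $\sigma_K$), the solution $\tilde v$ satisfies
\begin{equation*}
d\tilde v + [A\tilde v + B(\tilde v)]\,dt = h\,dt + g(\tilde v)\psi(t)\,dt + g(\tilde v)\,dW .
\end{equation*}
Thus $\tilde v$ solves the same equation as the process defining $\Psi_{v_0}$, except that the driving noise $W$ is replaced by the shifted process $\tilde W$, and everything reduces to showing that the law of $\tilde W$ is equivalent to that of $W$. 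The relevant object is the adapted $U$-valued drift $\psi$.

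Second, I would check Novikov's condition for $\psi$, and this is exactly where the stopping time is decisive. By Condition (g3) the quantity $C_f:=\sup_{u\in H}\|f(u)\|_{L(H,U)}$ is finite, so $\|\psi(s)\|_U\le \lambda C_f\|P_N(u(s)-v(s))\|$, and by the definition of $\sigma_K$
\begin{equation*}
\int_0^{\infty}\|\psi(s)\|_U^2\,ds=\int_0^{\sigma_K}\|\psi(s)\|_U^2\,ds\le \lambda^2 C_f^2\int_0^{\sigma_K}\|P_N(u(s)-v(s))\|^2\,ds\le \lambda^2 C_f^2 K .
\end{equation*}
This is a pathwise deterministic bound, so $\mathbb E\exp\big(\tfrac12\int_0^{\infty}\|\psi\|_U^2\,ds\big)\le \exp(\tfrac12\lambda^2 C_f^2K)<\infty$ and Novikov's criterion holds. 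Consequently the exponential $\mathcal Z:=\exp\big(-\int_0^{\infty}\psi\,dW-\tfrac12\int_0^{\infty}\|\psi\|_U^2\,ds\big)$ is a uniformly integrable martingale, $d\tilde{\mathbb P}:=\mathcal Z\,d\mathbb P$ defines a probability measure with $\tilde{\mathbb P}\sim\mathbb P$, and by Girsanov's theorem $\tilde W$ is a $U$-cylindrical Wiener process under $\tilde{\mathbb P}$.

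Third, I would transfer this equivalence to the level of path laws. Under $\tilde{\mathbb P}$ the process $\tilde v$ solves $d\tilde v+[A\tilde v+B(\tilde v)]\,dt=h\,dt+g(\tilde v)\,d\tilde W$ with initial datum $v_0$ and $\tilde W$ a $\tilde{\mathbb P}$-Wiener process; by the well-posedness and uniqueness in law for \eqref{My} under (g1)--(g2), this is precisely the equation defining $\Psi_{v_0}$, whence the law of $\tilde v$ under $\tilde{\mathbb P}$ coincides with $\mathbb P\Psi_{v_0}^{-1}$. On the other hand, the law of $\tilde v$ under $\mathbb P$ is $\mathbb P(\tilde\Psi_{u_0,v_0})^{-1}$ by definition. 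Since $\tilde{\mathbb P}\sim\mathbb P$, the push-forwards of the two measures under the same map $\omega\mapsto\tilde v(\cdot,\omega)$ remain mutually absolutely continuous, and therefore $\mathbb P(\tilde\Psi_{u_0,v_0})^{-1}\sim\mathbb P\Psi_{v_0}^{-1}$, which is the assertion.

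The routine verifications are the well-posedness of \eqref{Mz1} and the measurability of $\psi$, both inherited from the already established solvability of \eqref{My} and \eqref{Mz}. The only point genuinely requiring care is that Girsanov is applied on the whole half-line $[0,\infty)$ rather than on a finite interval $[0,T]$: this forces one to produce a uniformly integrable limiting density and to argue equivalence (not merely local equivalence) of the two measures. It is precisely the truncation at $\sigma_K$, yielding the deterministic bound $\lambda^2C_f^2K$ on $\int_0^\infty\|\psi\|_U^2\,ds$ and confining the shift to the finite random interval $[0,\sigma_K]$, that removes this difficulty.
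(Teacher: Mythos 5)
Your proof is correct and follows essentially the same route as the paper: the deterministic bound $\int_0^\infty\|\psi(s)\|_U^2\,ds\le\lambda^2 C_f^2 K$ from the stopping time $\sigma_K$ and Condition (g3), then Novikov's criterion and an infinite-horizon Girsanov change of measure, and finally transfer of equivalence to the path laws. The only difference is one of exposition: you spell out the uniform integrability of the density, the use of weak uniqueness for \eqref{My}, and the pushforward argument, all of which the paper's terser proof leaves implicit.
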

\begin{proof}
It follows from the definition of $\sigma_{K}$ and Condition (g3) that
\begin{equation*}
\begin{split}
\int_{0}^{\infty}\|h(s)\|_{U}^{2}\mathbf{1}_{s\leq \sigma_{K}}ds
&\leq\lambda^{2}(\sup\limits_{u\in H}\|f(u)\|_{L(H, U)}^{2})\int_{0}^{\infty}\|P_{N}(u(s)-v(s))\|^{2}\mathbf{1}_{s\leq \sigma_{K}}ds\\
&\leq\lambda^{2}(\sup\limits_{u\in H}\|f(u)\|_{L(H, U)}^{2})K,
\end{split}
\end{equation*}
the drift $h(s) \mathbf{1}_{s\leq \sigma_{K}}$ satisfies the Novikov condition
\begin{equation*}
\begin{split}
\mathbb{E}[\exp(\frac{1}{2}\int_{0}^{\infty}\|h(s)\|_{U}^{2}\mathbf{1}_{s\leq \sigma_{K}}ds)]<\infty.
\end{split}
\end{equation*}
By the Girsanov Theorem we infer that there exists a probability measure $Q$
on $C([0,+\infty);H)$  such that under $Q$, $\tilde{W}$ is a $U-$valued Wiener process on the time
interval $[0,+\infty)$. It follows that the law of the solution to the nudged stopped equation
\eqref{Mz1} is equivalent on $C([0,+\infty);H)$ to the law of the solution to the equation \eqref{My}
with initial condition $v_{0}$.
\end{proof}
\begin{remark}
According to the uniqueness of the solution
of equation \eqref{Mz}, on the set $\{\sigma_{K}=\infty\}$ we have that $v=\tilde{v}$, $\mathbb{P}-$a.s., where $v$ is the
solution of the nudged equation \eqref{Mz}.
\end{remark}

\begin{proposition}\label{pro15}
Let $h=h(x)$ and Conditions (g1)-(g2) hold.
Then there exists an integer $N_{0}\geq1$ and $\lambda=\lambda(N_{0})$, such that for $N\geq N_{0},$
it holds that
\begin{equation*}
\begin{split}
\mathbb{P}(\lim\limits_{n\rightarrow\infty}\|u(n)-\tilde{v}(n)\|=0)>0.
\end{split}
\end{equation*}
\end{proposition}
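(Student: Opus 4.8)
The plan is to work with the pair $(u,v)$ solving \eqref{My}--\eqref{Mz}, to set $w:=u-v$, and to transfer any asymptotic information on $w$ to the pair $(u,\tilde v)$ through the identity recorded in the remark following Proposition \ref{pro16}: on the event $\{\sigma_{K}=\infty\}$ one has $v=\tilde v$ almost surely. Consequently it suffices to exhibit an event of positive probability on which $\sigma_{K}=\infty$ and $\|u(n)-v(n)\|\to 0$ simultaneously, since there $\|u(n)-\tilde v(n)\|=\|u(n)-v(n)\|\to 0$. The whole argument rests on the Foia\c{s}--Prodi machinery of Section~5, in particular on the weighted bound \eqref{2} and the stopping-time estimate \eqref{7}, both formulated along the stopping time $\tau_{R,\beta}$.

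First I would fix $N\geq N_{0}$ and $\lambda=\tfrac{\lambda_{N}}{2}$ as in Theorem \ref{FPE}, and recall that on $\{\tau_{R,\beta}=\infty\}$ one has $\Gamma(t)\geq\tfrac{\lambda_{N}}{4}t-(R+\beta)$, so that \eqref{2} gives
$$\mathbb{E}\big(\mathbf{1}_{\{\tau_{R,\beta}=\infty\}}\|w(n)\|^{2}\big)\leq e^{R+\beta}\|w(0)\|^{2}\,e^{-\frac{\lambda_{N}}{4}n},\qquad n\in\mathbb{N}.$$
Chebyshev's inequality with the threshold $\delta_{n}=e^{-\frac{\lambda_{N}}{16}n}$ yields the summable bound $\sum_{n}\mathbb{P}(\{\tau_{R,\beta}=\infty\}\cap\{\|w(n)\|>\delta_{n}\})<\infty$, so the Borel--Cantelli lemma upgrades the $L^{1}$-decay to almost sure decay: on $\{\tau_{R,\beta}=\infty\}$ one has $\|w(n)\|\to 0$ as $n\to\infty$, off a null set. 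Moreover, by \eqref{7}, once $N$ and $\beta$ are fixed large enough one may take $R$ large enough to guarantee $\mathbb{P}(\tau_{R,\beta}=\infty)>0$ (under (g2)(i) because $\mathbb{P}(\tau_{R,\beta}<\infty)\leq 2e^{-CR}<1$, and under (g2)(ii)--(iii) because $\mathbb{P}(\tau_{R,\beta}<\infty)\leq C(1+\|u_{0}\|^{2p}+\|v_{0}\|^{2p})R^{-(p/2-1)}\to 0$).

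Next I would coordinate the two stopping times. Integrating \eqref{2} in time and using Tonelli's theorem,
$$\mathbb{E}\Big(\mathbf{1}_{\{\tau_{R,\beta}=\infty\}}\int_{0}^{\infty}\|w(t)\|^{2}\,dt\Big)\leq\frac{4}{\lambda_{N}}e^{R+\beta}\|w(0)\|^{2}<\infty,$$
so that $\int_{0}^{\infty}\|P_{N}w(t)\|^{2}\,dt\leq\int_{0}^{\infty}\|w(t)\|^{2}\,dt<\infty$ almost surely on $\{\tau_{R,\beta}=\infty\}$. Since this integral is a.s. finite on an event of positive probability, continuity from below of $\mathbb{P}$ shows that for $K$ large enough the event
$$G:=\{\tau_{R,\beta}=\infty\}\cap\Big\{\int_{0}^{\infty}\|P_{N}(u-v)\|^{2}\,dt<K\Big\}$$
has positive probability. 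By the definition of $\sigma_{K}$ one has $\sigma_{K}=\infty$ on $G$, hence $v=\tilde v$ there, while the Borel--Cantelli step gives $\|u(n)-v(n)\|\to 0$ on $G$; therefore $\mathbb{P}(\lim_{n\to\infty}\|u(n)-\tilde v(n)\|=0)\geq\mathbb{P}(G)>0$.

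The main obstacle is not the in-expectation decay, which is exactly Theorem \ref{FPE}, but passing from it to an almost sure statement holding uniformly on a single event of positive probability on which, in addition, $\sigma_{K}=\infty$. The exponential weight $e^{\Gamma}$, whose exponent is linear in $t$ on $\{\tau_{R,\beta}=\infty\}$, resolves both difficulties at once: it produces a geometric bound along integers feeding Borel--Cantelli, and, after integration in $t$, the finiteness of $\int_{0}^{\infty}\|w\|^{2}\,dt$ feeding $\sigma_{K}=\infty$; crucially it does so uniformly across the three growth regimes of $g$ in (g2), so that only $\mathbb{P}(\tau_{R,\beta}=\infty)>0$ is needed here, rather than the sharper integrability of \eqref{62} that would demand $L_{3}<1/\sqrt5$ (the latter being required only for the stronger convergence in Theorem \ref{MT3}).
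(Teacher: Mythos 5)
Your proposal is correct, and its skeleton coincides with the paper's: both arguments work along the stopping time $\tau_{R,\beta}$, use the conditional exponential decay \eqref{2} together with the tail bound \eqref{7}, and reduce the statement to exhibiting a positive-probability event on which simultaneously $\|u(n)-v(n)\|\to 0$ and $\int_{0}^{\infty}\|P_{N}(u-v)\|^{2}\,dt\leq K$, so that $\sigma_{K}=\infty$ and hence $v=\tilde v$ there. The difference lies in how that integral is controlled, and your route is genuinely leaner. The paper splits the time axis at a deterministic $m^{*}$: the tail $\int_{m^{*}}^{\infty}$ is handled through the events $B_{n}$ (whose definition bundles the endpoint norm $\|u(n)-v(n)\|^{2}$ with the increment $\int_{n}^{n+1}\|P_{N}(u-v)\|^{2}\,ds$, against the summable thresholds $n^{-2}$), while the initial piece $\int_{0}^{m^{*}}$ requires the moment estimates of Lemmas \ref{L3} and \ref{L4} for $u$ \emph{and} for the nudged solution $v$, fed into Chebyshev via the auxiliary event $E_{R^{*},m^{*}}$; the resulting $K=R^{*}+\sum_{n\geq m^{*}}n^{-2}$ is explicit. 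You instead integrate \eqref{2} in $t$ (Tonelli) to get
\begin{equation*}
\mathbb{E}\Bigl(\mathbf{1}_{\{\tau_{R,\beta}=\infty\}}\int_{0}^{\infty}\|u(t)-v(t)\|^{2}\,dt\Bigr)\leq \frac{4}{\lambda_{N}}e^{R+\beta}\|u_{0}-v_{0}\|^{2}<\infty,
\end{equation*}
so the full integral is a.s.\ finite on $\{\tau_{R,\beta}=\infty\}$, and then choose $K$ by continuity from below; the a.s.\ decay of $\|u(n)-v(n)\|$ comes from Chebyshev with geometric thresholds plus Borel--Cantelli, exactly as the paper's Step 1 does in disguise. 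What your version buys is the complete elimination of Lemmas \ref{L3}--\ref{L4} and of the event $E_{R,m}$ from this proof (at the cost of a non-explicit $K$, which is harmless since $K$ is a free parameter of the stopped nudged equation); what the paper's version buys is a quantitative lower bound $\mathbb{P}(\cdot)\geq \tfrac12$ with an explicit $K$, neither of which is needed for the stated conclusion. Your closing observation is also accurate: only $\mathbb{P}(\tau_{R,\beta}=\infty)>0$ is used, which is why $L_{3}<1$ suffices here, whereas the integrability of the bound \eqref{62} (hence $p>6$, forcing $L_{3}<1/\sqrt{5}$ under (g2)(iii)) is needed only for Theorem \ref{MT3}.
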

\begin{proof}
For $R,m>0$ to be chosen later on, we define
\begin{equation*}
\begin{split}
E_{R,m}:=\{\int_{0}^{m}\|P_{N}(u(s)-v(s))\|ds>R\}.
\end{split}
\end{equation*}
For any $n\in \mathbb{N},$ we define
\begin{equation*}
\begin{split}
B_{n}:=\{\|u(n)-v(n)\|^{2}+\int_{n}^{n+1}\|P_{N}(u(s)-v(s))\|^{2}ds>\frac{1}{n^{2}}\}.
\end{split}
\end{equation*}
\par
We divide the proof into several steps.
\par
\textbf{Step 1.} We claim that there is some $m^{*}$ sufficiently large such that
\begin{equation}\label{57}
\begin{split}
\mathbb{P}(\bigcap_{n=m^{*}}^{\infty}B_{n}^{c})>\frac{3}{4}.
\end{split}
\end{equation}
\par
Indeed, we set $B:=\bigcap_{m=1}^{\infty}\bigcup _{n=m}^{\infty}B_{n}.$ We take $N\geq N_{1}$($N_{1}$ is in Proposition \ref{FPE})
and set $\lambda=$ in the nudged equation \eqref{Mz}. It follows from \eqref{2}, the Chebyshev inequality, the Fubini theorem that
\begin{equation*}
\begin{split}
\mathbb{P}(B_{n}\cap \{\tau_{R,\beta}=+\infty\})
& \leq n^{2}\mathbb{E}[\mathbf{1}_{\tau_{R,\beta}=+\infty}(\|u(n)-v(n)\|^{2}+\int_{n}^{n+1}\|u(s)-v(s)\|^{2}ds)]\\
& \leq Cn^{2}e^{-Cn},
\end{split}
\end{equation*}
this implies that $\sum\limits_{n=1}^{\infty}\mathbb{P}(B_{n}\cap \{\tau_{R,\beta}=+\infty\})<\infty,$ thus, we have
$\mathbb{P}(B\cap \{\tau_{R,\beta}=+\infty\})=0.$
\par
We take some suitable $\beta$ fixed, according to \eqref{7}, we have $\mathbb{P}(B\cap \{\tau_{R,\beta}<+\infty\})\leq \mathbb{P}(\{\tau_{R,\beta}<+\infty\})=o(R^{-1}).$ This implies that
\begin{equation*}
\begin{split}
\mathbb{P}(B)=\mathbb{P}(B\cap \{\tau_{R,\beta}=+\infty\})+\mathbb{P}(B\cap \{\tau_{R,\beta}<+\infty\})
=o(R^{-1}),
\end{split}
\end{equation*}
then $\mathbb{P}(B^{c})=1,$ from the
continuity from below, we can thus take $m^{*}$ sufficiently large so that \eqref{57} holds.
\par
\textbf{Step 2.} We claim that there is some $R^{*}$ sufficiently large such that
\begin{equation}\label{58}
\begin{split}
\mathbb{P}(E_{R^{*},m^{*}}^{c}\cap\bigcap_{n=m^{*}}^{\infty}B_{n}^{c})\geq \frac{1}{2}.
\end{split}
\end{equation}
\par
Indeed, it follows from Lemma \ref{L3}, Lemma \ref{L4} and the Chebyshev inequality that
\begin{equation*}
\begin{split}
\mathbb{P}(E_{R,m^{*}})\leq \frac{\mathbb{E}[\int_{0}^{m^{*}}\|P_{N}(u(s)-v(s))\|^{2}ds]}{R}\leq \frac{C(u_{0},v_{0})m^{*}}{R},
\end{split}
\end{equation*}
by taking $R^{*}$ large enough, we have $\mathbb{P}(E_{R^{*},m^{*}}^{c})\geq \frac{3}{4}.$
Moreover, we have \eqref{58}.
\par
\textbf{Step 3.} We finish Proposition \ref{pro15}.
\par
On the set $E_{R^{*},m^{*}}^{c}\cap\bigcap_{n=m^{*}}^{\infty}B_{n}^{c},$ it holds that
\begin{equation*}
\begin{split}
\int_{0}^{\infty}\|P_{N}(u(s)-v(s))\|^{2}ds
&=\int_{0}^{m^{*}}\|P_{N}(u(s)-v(s))\|^{2}ds+\sum\limits_{n=m^{*}}^{\infty}\|P_{N}(u(s)-v(s))\|^{2}ds\\
&\leq R^{*}+\sum\limits_{n=m^{*}}^{\infty}n^{-2}:=K.
\end{split}
\end{equation*}
This implies that $$E_{R^{*},m^{*}}^{c}\cap\bigcap_{n=m^{*}}^{\infty}B_{n}^{c}\subset \{\sigma_{K}=\infty\}.$$ On the other hand, for any $m,$
$$\bigcap_{n=m}^{\infty}B_{n}^{c}\subset \{\lim\limits_{n\rightarrow\infty}\|u(n)-\tilde{v}(n)\|=0\}.$$
Thus, by \eqref{58}, we have
\begin{equation*}
\begin{split}
\mathbb{P}\{\lim\limits_{n\rightarrow\infty}\|u(n)-\tilde{v}(n)\|=0\}
&\geq\mathbb{P}\{(\lim\limits_{n\rightarrow\infty}\|u(n)-\tilde{v}(n)\|=0)\cap (\sigma_{K}=\infty)\}\\
&=\mathbb{P}\{(\lim\limits_{n\rightarrow\infty}\|u(n)-v(n)\|=0)\cap (\sigma_{K}=\infty)\}\\
&\geq\mathbb{P}(E_{R^{*},m^{*}}^{c}\cap\bigcap_{n=m^{*}}^{\infty}B_{n}^{c})\geq \frac{1}{2}.
\end{split}
\end{equation*}
\end{proof}

\begin{proof}[Proof of Theorem \ref{MT2}]
For any $u_{0},v_{0}\in H,$ we define $\tilde{\xi}_{u_{0},v_{0}}=\mathcal{D}(\{(u(n),\tilde{v}(n))\}_{n\in \mathbb{N}}),$ where $y$
and $\tilde{v}$ are the solutions to \eqref{My} and \eqref{Mz1} with corresponding initial data $u_{0},v_{0},$ respectively.
We can see $\tilde{\xi}_{u_{0},v_{0}}$ is a measure on $H^{\mathbb{N}}\times H^{\mathbb{N}}.$ By Proposition \ref{pro16}, $\pi_{2}(\tilde{\xi}_{u_{0},v_{0}})\sim \mathbb{P}_{z_{0}},$
then $\tilde{\xi}_{u_{0},v_{0}}\in \mathcal{\tilde{C}}(\mathbb{P}_{u_{0}},\mathbb{P}_{v_{0}}).$ By Proposition \ref{pro15}, $\tilde{\xi}_{u_{0},v_{0}}(D)=\mathbb{P}(\lim\limits_{n\rightarrow\infty}\|u(n)-\tilde{v}(n)\|=0)>0.$ According to Theorem \ref{AC1}, we can prove Theorem \ref{MT2}.
\end{proof}
\subsection{Proof of Theorem \ref{MT3}}
Let Condition (g3) hold for some $M\geq N_{0}.$ Let $u$ and $v$ be the solutions to \eqref{My} and \eqref{Mz} with initial dates $u_{0},v_{0}\in H,$
respectively. The proof of Theorem \ref{MT3} is divided into several steps.
\par
\textbf{Step 1.} For any $u_{0},v_{0}\in H,$ we define $\xi_{u_{0},v_{0}}:=\mathcal{D}(\{(u(n),v(n))\}_{n\in \mathbb{N}}),$ then $\xi_{u_{0},v_{0}}$ is a measure on $H^{\mathbb{N}}\times H^{\mathbb{N}}$ and $\pi_{1}(\xi_{u_{0},v_{0}})\sim \mathbb{P}_{u_{0}}.$ If we have
\begin{equation}\label{60}
\begin{split}
\pi_{2}(\xi_{u_{0},v_{0}})\sim \mathbb{P}_{v_{0}},
\end{split}
\end{equation}
then $\xi_{u_{0},v_{0}}\in \mathcal{\tilde{C}}(\mathbb{P}_{u_{0}},\mathbb{P}_{v_{0}}).$
\par
Indeed, we know the solution to \textit{nudged equation} is the solution to the following system
\begin{eqnarray}\label{}
\begin{array}{l}
\left\{
\begin{array}{llll}
dv+[Av+B(v)]dt=hdt+g(v)d\hat{W}
\\ v(x,0)=v_{0}(x)
\end{array}
\right.
\end{array}
\end{eqnarray}
where $\tilde{W}:=W(t)+\int_{0}^{t}h(s)ds.$
\begin{equation*}
\begin{split}
\mathbb{E}\int_{0}^{\infty}\|h(t)\|_{U}^{2}dt\leq \lambda^{2}(\sup\limits_{u\in H}\|f(u)\|_{L(H, U)}^{2})\mathbb{E}\int_{0}^{\infty}\|u(t)-v(t)\|^{2}dt.
\end{split}
\end{equation*}
According to Theorem \ref{FPE}, we have
\begin{equation*}
\begin{split}
\mathbb{E}\|u(t)-v(t)\|^{2}\leq
\left\{
\begin{array}{lll}
e^{C(1+\|u_{0}\|^{2p}+\|v_{0}\|^{2p})}e^{-Ct} \\
e^{C(1+\|u_{0}\|^{2p}+\|v_{0}\|^{2p})}\frac{1}{t^{\frac{p}{4}-\frac{1}{2}}}
\end{array}
\right.
\begin{array}{lll}
under~(g2)(i),\\
under~(g2)(ii)~or~(g2)(iii),
\end{array}
\end{split}
\end{equation*}
for any $p$ in \eqref{54}. This implies that
\begin{equation*}
\begin{split}
\mathbb{E}\int_{0}^{\infty}\|u(t)-v(t)\|^{2}dt=\mathbb{E}\int_{0}^{1}\|u(t)-v(t)\|^{2}dt+\mathbb{E}\int_{1}^{\infty}\|u(t)-v(t)\|^{2}dt<\infty.
\end{split}
\end{equation*}
By the Girsanov Theorem, the law of $\hat{W}$ is absolutely continuous w.r.t. the
law of $W$. In turn, the law of the solution $z$ to the nudged equation \eqref{Mz} is
absolutely continuous w.r.t. the law of the solution $u$ to equation \eqref{My} with
initial data $v_{0}$, as measures on $C([0,\infty);H).$ This proves \eqref{60}.
\par
\textbf{Step 2.} We prove
\begin{equation*}
\begin{split}
\lim\limits_{n\rightarrow\infty}\xi_{u_{0},v_{0}}(D_{\varepsilon}^{n})=\lim\limits_{n\rightarrow\infty}\mathbb{P}(\|u(n)-\tilde{v}(n)\|\leq \varepsilon)=1.
\end{split}
\end{equation*}
\par
Indeed, noting the fact
$\mathbb{P}(\|u(n)-\tilde{v}(n)\|> \varepsilon)\leq \frac{1}{\varepsilon^{2}}\mathbb{E}\|u(n)-\tilde{v}(n)\|^{2},$
with the help of Theorem \ref{FPE}, we have
$\lim\limits_{n\rightarrow\infty}\mathbb{P}(\|u(n)-\tilde{v}(n)\|> \varepsilon)=0.$
\par
\textbf{Step 3.}
Since the assumptions of Theorem \ref{AC2} are verified, according to Theorem \ref{AC2}, we can prove Theorem \ref{MT3}.
\par
This completes the proof.

\par~~
\par~~
\par

\noindent \footnotesize {\bf Acknowledgements.}
\par
This work was started when Peng Gao was visiting Institute of Mathematical Sciences in NYU-ECNU, he thanks to Professor Nersesyan V for his invitation
and the institute for its hospitality.
Peng Gao thanks Professor Kuksin S for his invitation to Universit\'{e} Paris Cit\'{e} and for his hospitality.
Peng Gao would like to thank the financial support of the China Scholarship Council (No. 202406620219).
This work is supported by
Natural Science Foundation of Jilin Province (Grant No. YDZJ202201ZYTS306), NSFC
(Grant No. 12371188) and the Fundamental Research Funds for the Central Universities (Grant
No. 135122006).
\par~~
\par
\noindent \footnotesize {\bf Data Availability.} \par
Data sharing not applicable to this article as no data sets were generated or analysed during the current study.
\par~~
\par
\noindent \footnotesize {\bf Competing Interests.} \par
The author declares that there is no conflict of interest.
\par
\par
\par

\noindent\baselineskip 6pt \renewcommand{\baselinestretch}{0.9}

{\footnotesize
}
\end{document}